\theoremstyle{plain}
	\newtheorem{thm}{Theorem}
	\newtheorem{lemma}[thm]{Lemma}
	\newtheorem{prop}[thm]{Proposition}
\theoremstyle{remark}
	\newtheorem{remark}[thm]{Remark}
\theoremstyle{definition}
	\newtheorem{definition}[thm]{Definition}	
\title{N\'emethi's division algorithm for zeta-functions of plumbed 3-manifolds}
\author{Tam\'as L\'aszl\'o}
\address{BCAM - Basque Center for Applied Mathematics\\
Mazarredo, 14 E48009 Bilbao, Basque Country – Spain \\}
\email{tlaszlo@bcamath.org}
\author{Zsolt Szil\'agyi}
\address{Alfr\'ed R\'enyi Institute of Mathematics, Hungarian Academy of Sciences,  
1053 Budapest, Re\'altanoda u. 13-15,  Hungary.}
\email{szilagyi.zsolt@renyi.mta.hu}
\keywords{normal surface singularities, links of singularities, 
plumbing graphs, rational homology spheres, zeta-function, Seiberg--Witten invariant, polynomial part}
\subjclass[2010]{Primary. 32S05, 32S25, 32S50, 20Mxx, 57M27
Secondary. 14Bxx,  32Sxx, 14J80, 57R57}
\begin{document}
\maketitle

\pagestyle{myheadings} \markboth{{\normalsize
T. L\'aszl\'o and Zs. Szil\'agyi}}{{\normalsize  Division algorithm}}

\begin{abstract}
A polynomial counterpart of the Seiberg-Witten invariant associated with a negative definite plumbed $3$-manifold has been proposed by earlier work of the authors. It is provided by a special decomposition of the zeta-function defined by the combinatorics of the manifold. In this article we give an algorithm, based on multivariable Euclidean division of the zeta-function, for the explicit calculation of the polynomial, in particular for the  Seiberg--Witten invariant.
\end{abstract}

\section{Introduction}

\subsection{} The main motivation of the present article is to understand a multivariable division algorithm, proposed by A. N\'emethi (cf. \cite{Npers}, \cite{BN}), for the calculation of the normalized Seiberg--Witten invariant of a negative definite plumbed 3-manifold. The input is a multivariable zeta-function associated with the manifold and the output is a (Laurent) polynomial, called the polynomial part of the zeta-function. In particular, this is a polynomial `categorification' of the Seiberg--Witten invariant in the sense that the sum of its coefficients equals with the normalized Seiberg--Witten invariant. The polynomial part was defined by the authors in \cite{LSz} as a possible solution for the multivariable `polynomial- and negative-degree part' decomposition problem for the zeta-function (cf. \cite{BN,LN,LSz}, see \ref{ss:polSW}). 

The one-variable algorithm goes back to the work of Braun and N\'emethi \cite{BN}. In that case the polynomial part is simply given by a division principle. However, in general, we show that in order to recover the multivariable polynomial part of \cite{LSz} one constructs a polynomial by  division and then one has to consider  its terms with suitable multiplicity according to the corresponding exponents and the structure of the plumbing graph. 

In the sequel, we give some details about the algorithm and state further results of the present note.
%
%
%
%
%

\subsection{}\label{intro2} Let $M$ be a closed oriented plumbed 3-manifold associated with a connected negative definite plumbing graph $\Gamma$. Or, equivalently, $M$ is the link of a complex normal surface singularity, and $\Gamma$ is its dual resolution graph. Assume that $M$ is a rational homology sphere, ie. $\Gamma$ is a tree and all the plumbed surfaces have genus zero. Let $\mathcal{V}$ be the set of vertices of $\Gamma$, $\delta_v$ be the valency of a vertex $v\in\mathcal{V}$, and we distinguish the following subsets: the set of \emph{nodes} $\mathcal{N}:=\{n\in \mathcal{V}:\delta_n \geq 3\}$ and the set of \emph{ends} $\mathcal{E}=\{v\in \mathcal{V}:\delta_v= 1\}$.

We consider the plumbed 4-manifold $\widetilde{X}$ associated with $\Gamma$. Its second homology $L:=H_2(\widetilde{X},\mathbb{Z})$ is a lattice, freely generated by the classes of 2-spheres $\{E_v\}_{v\in\mathcal{V}}$, endowed with the nondegenerate negative definite intersection form $(,)$. The second cohomology $L':=H^2(\widetilde{X},\mathbb{Z})$ is the dual lattice, freely generated by the (anti)dual classes $\{E_v^*\}_{v\in\mathcal{V}}$, where we set $(E^*_v,E_w)=-\delta_{vw}$, the negative of the Kronecker delta. The intersection form embeds $L$ into $L'$ and $H:=L'/L\simeq H_1(M,\mathbb{Z})$. Denote the class of $l'\in L'$ in $H$ by $[l']$. We denote by $\mathfrak{sw}^{norm}_h(M)$ the normalized Seiberg--Witten invariants of $M$ indexed by the group elements $h\in H$, see \ref{ss:sw}. 

The \emph{multivariable zeta-function} associated with $M$ (or $\Gamma$) is defined by  
$$
f(\mathbf{t}) = \prod_{v\in \mathcal{V}}(1-\mathbf{t}^{E^{*}_{v}})^{\delta_{v}-2},
$$
where $\mathbf{t}^{l'}:=\prod_{v\in\mathcal{V}}t_v^{l_v}$ for any $l'=\sum_{v\in\mathcal{V}}l_vE_v\in L'$. One has a natural decomposition into its $h$-equivariant parts $f(\mathbf{t})=\sum_{h\in H} f_{h}(\mathbf{t})$, see \ref{ss:def}. By a result of \cite{LN}, for our purposes, one can reduce the variables of $f_h$ to the variables of the nodes of the graph. Therefore we restrict our discussions to the reduced zeta-functions defined by $f_h(\mathbf{t}_{\mathcal{N}}) = f_{h}(\mathbf{t})|_{t_v=1,v\notin \mathcal{N}}$. Here we introduce notation $\mathbf{t}_{\mathcal{N}}^{l'} := \prod_{n\in\mathcal{N}}t_n^{l_n}$. 

\subsection{}\label{ss:intro3} 
The multivariable polynomial part $P_h(\mathbf{t}_{\mathcal{N}})$ associated with $f_h(\mathbf{t}_{\mathcal{N}})$ (\cite{LSz}), 
 is mainly a combination of the one- and two-variable cases studied by \cite{BN} and \cite{LN} corresponding to the structure of the \emph{orbifold graph} $\Gamma^{orb}$.
The vertices of $\Gamma^{orb}$ are the nodes of $\Gamma$ and two of them are connected by an edge if the corresponding nodes in $\Gamma$ are connected by a path which consists only vertices with valency $\delta_v=2$. 
The main property reads as $P_h(1)=\mathfrak{sw}^{norm}_h(M)$, see \ref{ss:polSW}.

\subsection{Multivariable division algorithm}
\label{sec:alg}
 On $L\otimes \mathbb{Q}$ we consider the partial order: for any $l_1,l_2$ one writes $l_1 > l_2$ if
$l_1-l_2=\sum_{v\in \mathcal{V}} \ell_v E_v$ with all $\ell_v > 0$.
We introduce a multivariable division algorithm in \ref{ss:multidivision}, which provides a unique decomposition
 (Lemma \ref{lem:+dec})
$$
f_{h}(\mathbf{t}_{\mathcal{N}}) = P^{+}_{h}(\mathbf{t}_{\mathcal{N}}) + f^{neg}_{h}(\mathbf{t}_{\mathcal{N}}),
$$ 
where $P^{+}_{h} (\mathbf{t}_{\mathcal{N}})= \sum_{\beta}p_{\beta}\mathbf{t}_{\mathcal{N}}^{\beta}$ is a Laurent polynomial such that  $\beta\not <0$ for every monomial and $f_{h}^{neg}(\mathbf{t}_{\mathcal{N}})$ is a rational function with negative degree in $t_{n}$ for all $n\in \mathcal{N}$. 

In \ref{ss:multiplicity} we define a multiplicity function $\mathfrak{s}$ involving the structure of $\Gamma^{orb}$ and we show in Theorem \ref{lm-0} that the polynomial part $P_{h}(\mathbf{t}_{\mathcal{N}})$ can be computed from the quotient $P^{+}_{h}(\mathbf{t}_{\mathcal{N}})$ by taking its monomial terms with multiplicity $\mathfrak{s}$. More precisely,
$$
P_{h}(\mathbf{t}_{\mathcal{N}}) = \sum_{\beta}\mathfrak{s}(\beta) p_{\beta}\mathbf{t}_{\mathcal{N}}^{\beta}.
$$

\subsection{Comparisons} A consequence of the above algorithm (cf. Remark \ref{rk:poly-plus}(\ref{poly-plus-i})) is that in general $P_h$ is `thicker' than $P^{+}_{h}$, in the sense that $\mathfrak{s}(\beta)\geq 1$ for all the exponents $\beta$ of $P^{+}_{h}$. This motivates the study of their comparison on two different classes of graphs. 

In the first case we assume that $\Gamma^{orb}$ is a bamboo, that is, there are no vertices with valency greater or equal than $3$. Notice that most of the examples considered in the aforementioned articles were taken from this class. We prove in Theorem \ref{thm:bamboo} that for these graphs the two polynomials agree. Thus, the Seiberg--Witten invariant calculation is provided only by the division. 

The second class is defined by a topological criterion: they are the graphs of the 3-manifolds $S^3_{-p/q}(K)$ obtained by $(-p/q)$-surgery along the connected sum $K$ of some algebraic knots. We provide a concrete example of this class for which one has $P_{h}\neq P^{+}_{h}$ for some $h$, see \ref{ss:ex}. More precisely, Theorem \ref{thm:Q} proves that if we look at part of the polynomials consisting of monomials for which the exponent of the variable associated with the `central' vertex of the graph (cf. \ref{ss:graph}) is non-negative, then they agree. (See \ref{ss:str} for precise formulation.) In fact, by Proposition \ref{prop:can}, for the canonical class $h=0$ these are the only monomials, hence   
$P_{0}=P^{+}_{0}$.

\subsection*{Acknowledgements}
TL is supported by ERCEA Consolidator Grant 615655 – NMST and also
by the Basque Government through the BERC 2014-2017 program and by Spanish excellence accreditation SEV-2013-0323. Partial financial support to ZsSz was provided by the `Lendület' program of the Hungarian Academy of Sciences.

\section{Preliminaries}



\subsection{Links of normal surface singularities}\label{ss:link}
\subsubsection{}
Let $\Gamma$ be a connected negative definite plumbing graph with vertices $\mathcal{V}=\mathcal{V}(\Gamma)$. By plumbing disk bundles along $\Gamma$, we obtain a smooth 4--manifold $\widetilde{X}$ whose boundary is an oriented 
plumbed 3--manifold $M$. $\Gamma$ can be realized as the dual graph of a good resolution $\pi:\widetilde{X}\to X$ of some complex normal surface singularity $(X,o)$ 
and $M$ is called the link of the singularity. In our study, we assume that $M$ is a {\it rational homology sphere}, or, equivalently, $\Gamma$ is a tree and all the genus decorations are zero.

Recall that $L:=H_2 (\widetilde{X},\mathbb{Z} )\simeq \mathbb{Z}\langle E_v\rangle_{v\in\mathcal{V}}$ is a lattice, freely
generated by the classes of the irreducible exceptional divisors 
$\{E_v\}_{v\in\mathcal{V}}$ (ie. classes of $2$-spheres), with 
a nondegenerate negative definite intersection form $I:=[(E_v,E_w)]_{v,w\in \mathcal{V}}$. $L':=H^2( \widetilde{X},\mathbb{Z})\simeq Hom(L,\mathbb{Z})$ is the dual lattice, freely generated by the (anti)duals $\{E_v^*\}_{v\in \mathcal{V}}$. $L$ is embedded in $L'$ by the intersection form (which extends to $L\otimes \mathbb{Q}\supset L'$) and their finite quotient is $H:=L'/L \simeq H^2(\partial \widetilde{X},\mathbb{Z})\simeq H_{1}(M, \mathbb{Z})$. 

\subsubsection{}\label{ss:det}
The {\it determinant} of a subgraph $\Gamma'\subseteq \Gamma$ is defined as the determinant of the negative of the submatrix of $I$ with rows and columns indexed with vertices of $\Gamma'$, and it will be denoted by $\mathrm{det}_{\Gamma'}$. In particular, $\mathrm{det}_\Gamma:=\det(-I)=|H|$. We will also consider the following subgraphs: since $\Gamma$ is a tree, for any two vertices $v,w\in \mathcal{V}$ there is a unique minimal connected subgraph 
$[v,w]$ with vertices $\{v_{i}\}_{i=0}^{k}$ such that $v=v_{0}$ and $w=v_{k}$. Similarly, we also introduce notations 
$[v,w)$, $(v,w]$ and $(v,w)$ for the complete subgraphs with vertices $\{v_{i}\}_{i=0}^{k-1}$, $\{v_{i}\}_{i=1}^{k}$ and $\{v_{i}\}_{i=1}^{k-1}$ respectively.

The inverse of $I$ has entries
 $(I^{-1})_{vw}=(E_v^*,E^*_w)$, all of them are negative. Moreover,
 they can be computed using determinants of subgraphs as  (cf. \cite[page 83]{EN})
\begin{equation}\label{eq:DETsgr}
- (E_v^*,E^*_w) = \frac{\det_{\Gamma\setminus [v,w]}}{\det_{\Gamma}}.
\end{equation}

\subsubsection{}\label{ss:order} We can consider the following partial order on $L\otimes \mathbb{Q}$: for any $l_1,l_2$ one writes $l_1\geq l_2$ if
$l_1-l_2=\sum_{v\in \mathcal{V}} \ell_v E_v$ with all $\ell_v\geq 0$. 
The Lipman (anti-nef) cone $\mathcal{S}'$ is defined by $\{l'\in L'\,:\, (l',E_v)\leq 0 \ \mbox{for all
$v$}\}$ and it is generated over $\mathbb{Z}_{\geq 0}$ by the
elements $E_v^*$. We use notation $\mathcal{S}'_{\mathbb{R}} := \mathcal{S}'\otimes \mathbb{R}$ for the real Lipman cone. 


\subsubsection{}
Let $\widetilde{\sigma}_{can}$ be the {\it canonical $spin^c$-structure} on $\widetilde{X}$.  Its
first Chern class $c_1( \widetilde{\sigma}_{can})=-K\in L'$, where $K$ is the canonical class in $L'$ defined by the 
adjunction formulas $(K+E_v,E_v)+2=0$  for all $v\in\mathcal{V}$.
The set of $spin^c$-structures $\mathrm{Spin}^c(\widetilde{X})$ of $\widetilde{X}$ is an $L'$-torsor, ie. if we denote
the $L'$-action by $l'*\widetilde{\sigma}$, then $c_1(l'*\widetilde{\sigma})=c_1(\widetilde{\sigma})+2l'$. 
Furthermore,  all the $spin^c$-structures of $M$ are obtained by restrictions from $\widetilde{X}$.
$\mathrm{Spin}^c(M)$ is an $H$-torsor, compatible with the restriction and the projection $L'\to H$.
The {\it canonical $spin^c$-structure} $\sigma_{can}$ of $M$ is the restriction 
of the canonical $spin^c$-structure $\widetilde{\sigma}_{can}$ of $\widetilde{X}$. Hence, for any $\sigma\in \mathrm{Spin}^c(M)$ one has $\sigma=h*\sigma_{can}$ for some $h\in H$.


\subsection{Seiberg--Witten invariants of normal surface singularities}\label{ss:sw}
For any closed, oriented and connected 3-manifold $M$ we consider the {\it Seiberg--Witten invariant} $\mathfrak{sw}:\mathrm{Spin}^c(M)\rightarrow \mathbb{Q}$, $\sigma\mapsto \mathfrak{sw}_{\sigma}(M)$. In the case of rational homology spheres, it is the signed count of the solutions of the `3-dimensional' Seiberg--Witten equations, modified by the Kreck--Stolcz invariant (cf. \cite{Lim, Nic04}). 

Since its calculation is difficult by the very definition, several topological/combinatorial interpretations have been invented in the last decades. Eg., \cite{Nic04}  has showed that for rational homology spheres $\mathfrak{sw}(M)$ is 
equal with the Reidemeister--Turaev torsion normalized by the Casson--Walker invariant which, in some plumbed cases, can be expressed in terms of the graph and Dedekind--Fourier sums (\cite{Lescop,NN1}). Furthermore, there exist surgery formulas coming from homology exact sequences (eg. Heegaard--Floer homology, monopole Floer homology, lattice cohomology, etc.), where the involved homology theories appear as categorifications of the (normalized) Seiberg--Witten invariant. 

In the case when $M$ is a rational homology sphere link of a normal surface singularity $(X,o)$, different type of surgery- (\cite{BN,LNN}) and combinatorial formulas (\cite{LN,LSz}) have been proved expressing the strong connection of the Seiberg--Witten invariant and the zeta-function/Poincar\'e series associated with $M$ (\cite{NJEMS}). This connection will be explained in the next section. Moreover, we emphasize that the Seiberg--Witten invariant plays a crucial role in the intimate relationship between the topology and geometry of normal surface singularities since it can be viewed as the topological `analogue' of the geometric genus of $(X,o)$, cf. \cite{NN1}.  

For different purposes we may use different normalizations of the Seiberg--Witten invariant. The one we will consider in this article is the following: for any class $h\in H=L'/L$ we define the unique element $r_h\in L'$ characterized by $r_h\in \sum_{v}[0,1)E_v$ with $[r_h]=h$, then 
\begin{equation}\label{swnorm}
\mathfrak{sw}^{norm}_h(M):=-\frac{(K+2r_h)^2+|\mathcal{V}|}{8}-\mathfrak{sw}_{-h*\sigma_{can}}(M)
\end{equation} 
is called the {\em normalized Seiberg--Witten invariant} of $M$ associated with $h\in H$.

\subsection{Zeta-functions and Poincar\'e series}\label{s:ps}
\subsubsection{\bf Definitions and motivation}\label{ss:def}
We have already defined in section \ref{intro2} the multivariable zeta-function $f(\mathbf{t})$ associated with the manifold $M$. Its multivariable Taylor expansion at the origin 
$Z(\mathbf{t})=\sum_{l'}p_{l'} \mathbf{t}^{l'} \in \mathbb{Z}[[L']]$ is called the {\em topological Poincar\'e series}, where $\mathbb{Z}[[L']]$ is the $\mathbb{Z}[L']$-submodule of $\mathbb{Z}[[t_{v}^{\pm 1/|H|}:v\in \mathcal{V}]]$ consisting of series $\sum_{l'\in L'}a_{l'}\mathbf{t}^{l'}$ with $a_{l'}\in \mathbb{Z}$ for all $l'\in L'$. It decomposes naturally into $Z(\mathbf{t})=\sum_{h\in H} Z_{h}(\mathbf{t})$, where $Z_{h}({\mathbf{t}})=\sum_{[l']=h} p_{l'} \mathbf{t}^{l'}$. 
By (\ref{ss:order}), $Z(\mathbf{t})$ is supported in $\mathcal{S}'$, hence $Z_{h}(\mathbf{t})$ is supported in $(l'+L)\cap \mathcal{S}'$, where $l'\in L'$ with $[l']=h$. This decomposition induces a decomposition $f(\mathbf{t})=\sum_{h\in H}f_h(\mathbf{t})$ on the zeta-function level as well, where explicit formula for $f_h(\mathbf{t})$ is provided by \cite{LSznew}.

The zeta-function and its series were introduced by the work of N\'emethi \cite{NPS}, motivated by singularity theory. For a normal surface singularity $(X,o)$ with fixed resolution graph $\Gamma$ we may consider the equivariant divisorial Hilbert series $\mathcal{H}(\mathbf{t})$ which can be connected with the topology of the link $M$ by  introducing the series $\mathcal{P}(\mathbf{t})=
-\mathcal{H}( \mathbf{t}) \cdot \prod_{v\in \mathcal{V}}(1 - t_v^{-1})\in \mathbb{Z}[[L']]$. The point is that, for $h=0$, $Z_0(\mathbf{t})$ serves as the  
`topological candidate' for $\mathcal{P}(\mathbf{t})$: they agree for several class of singularities, eg. for splice quotients (see \cite{NCL}), which
contain all the rational, minimally elliptic or weighted homogeneous singularities. 

For more details regarding to this theory we refer to \cite{CDGPs,CDGEq,NPS,NCL}.

\subsubsection{\bf Counting functions, Seiberg--Witten invariants and reduction}\label{s:sw}
For any $h\in H$ we define the {\it counting function} of the coefficients of $Z_{h}(\mathbf{t})=\sum_{[l']=h}p_{l'} 
\mathbf{t}^{l'}$ by $x\mapsto Q_{h}(x):=\sum_{l'\not\geq x,\, [l']=h} \, p_{l'}.$ This sum is finite since 
$\{l'\in \mathcal{S}'\,:\, l'\ngeq x\}$ is finite 
by \ref{ss:order}. 

Its relation with the Seiberg--Witten invariant is given by a powerful result of N\'emethi \cite{NJEMS} saying that if $x\in (-K+ \textnormal{int}(\mathcal{S}'))\cap L$  then 
\begin{equation}\label{eq:countf}
Q_{h}(x)=\chi_{K+2r_h}(x)+\mathfrak{sw}_h^{norm}(M),
\end{equation}
where $\chi_{K+2r_h}(x):=-(K+2r_h+x,x)/2$. Thus, $Q_{h}(x)$ is a multivariable quadratic polynomial on $L$ with constant term $\mathfrak{sw}^{norm}_h(M)$. 
Furthermore, the idea of the general framework given by \cite{LN} is the following: there exists a conical chamber decomposition of the real cone $\mathcal{S}'_{\mathbb{R}}=\cup_{\tau}\mathcal{C}_{\tau}$, a sublattice $\widetilde L\subset L$ and $l'_* \in \mathcal{S}'$ such that $Q_h(l')$ is a polynomial on  
$\widetilde L\cap(l'_* +\mathcal{C_{\tau}})$, say $Q^{\mathcal{C}_{\tau}}_h(l')$. This allows to define the {\em multivariable periodic constant} by $\mathrm{pc}^{\mathcal{C}_{\tau}}(Z_h):= Q^{\mathcal{C}_{\tau}}_h(0)$ 
associated with $h\in H$ and $\mathcal{C}_{\tau}$. Moreover, $Z_h(\mathbf{t})$ is rather special in the sense that all $Q^{\mathcal{C}_{\tau}}_h$ are equal for any $\mathcal{C}_{\tau}$. In particular, we say that there exists the periodic constant  
$\mathrm{pc}^{S'_{\mathbb{R}}}(Z_h):=\mathrm{pc}^{\mathcal{C}_{\tau}}(Z_h)$ associated with $S'_{\mathbb{R}}$, and in fact, it is equal with $\mathfrak{sw}^{norm}_h(M)$.

%
%
We also notice that (\ref{eq:countf}) has a geometric analogue which 
expresses the geometric genus of the complex normal surface singularity $(X,o)$ from the series $\mathcal{P}(\mathbf{t})$ (cf. \cite{NCL}).

\cite{LN} has showed also that from the point of view of the above relation the number of variables of the zeta-function (or Poincar\'e series) can be reduced to the number of nodes $|\mathcal{N}|$. Thus, if we define the \emph{reduced zeta-function} and \emph{reduced Poincar\'e series} by 
$$
f_h(\mathbf{t}_{\mathcal{N}}) = f_{h}(\mathbf{t})\mid_{t_v=1,v\notin \mathcal{N}}
\qquad\textnormal {and } \qquad
Z_h(\mathbf{t}_{\mathcal{N}}):=Z_h(\mathbf{t})\mid_{t_v=1,v\notin \mathcal{N}},
$$
then there exists the periodic constant of $Z_h(\mathbf{t}_{\mathcal{N}})$ associated with the projected real Lipman cone  
$\pi_{\mathcal{N}}(S'_{\mathbb{R}})$, where $\pi_{\mathcal{N}}:\mathbb{R}\langle E_v\rangle_{v\in\mathcal{V}}\to \mathbb{R}\langle E_v\rangle_{v\in\mathcal{N}}$ is the natural projection along the linear subspace $\mathbb{R}\langle E_v\rangle_{v\notin\mathcal{N}}$, and 
$$\mathrm{pc}^{\pi_{\mathcal{N}}(S'_{\mathbb{R}})}(Z_h(\mathbf{t}_{\mathcal{N}}))=\mathrm{pc}^{S'_{\mathbb{R}}}(Z_h(\mathbf{t}))=
\mathfrak{sw}^{norm}_h(M).$$ We set notation $\mathbf{t}_{\mathcal{N}}^{x} := \mathbf{t}^{\pi_{\mathcal{N}}(x)}$ for any $x\in L'$.  

The above identity allows us to consider only the reduced versions in our study, which has several advantages: the number of reduced variables is drastically smaller, hence reduces the complexity of the calculations; reflects to the complexity of the manifold $M$ (e.g. one-variable case is realized for Seifert 3-manifolds); for special classes of singularities the reduced series can be compared with certain geometric series (or invariants), cf. \cite{NPS}.

\subsection{`Polynomial-negative degree part' decomposition}\label{ss:polSW}

\subsubsection{\bf One-variable case}\label{ss:onenode}
Let $s(t)$ be a one-variable rational function of the form $B(t)/A(t)$ with $A(t)=\prod_{i=1}^d(1-t^{a_i})$ and $a_i>0$. 
Then by \cite[7.0.2]{BN} one has a unique decomposition $s(t)=P(t)+s^{neg}(t)$, where $P(t)$ is a polynomial and $s^{neg}(t)=R(t)/A(t)$ has negative degree with vanishing periodic constant.  Hence, the periodic constant $\mathrm{pc}(s)$ (associated with the Taylor expansion of $s$ and the cone $\mathbb{R}_{\geq0}$) equals $P(1)$. $P(t)$ is called the \emph{polynomial part} while the rational function $s^{neg}(t)$ is called the \emph{negative degree part} of the decomposition. The decomposition can be deduced easily by the following division on the individual rational fractions:
\begin{equation}\label{eq:div}
\frac{t^{b}}{\prod_i(1-t^{a_i})}=-\frac{t^{b-a_{i_0}}}{\prod_{i\neq i_0}(1-t^{a_i})} + \frac{t^{b-a_{i_0}}}{\prod_{i}(1-t^{a_i})}=\sum_{\substack{x_i\geq 1\\ \sum_i x_ia_i\leq b } } p_{(x_i)}\cdot t^{ b-\sum_i x_ia_i} +  \substack{ \textnormal{negative degree} \\ \\\textnormal{rational function} },
\end{equation}
for some coefficients $p_{(x_i)}\in \mathbb{Z}$.
\subsubsection{\bf Multivariable case}\label{ss:twonode}
The idea towards to the multivariable generalization goes back to the theory developed in \cite{LN}, saying that the counting functions associated with zeta-functions are Ehrhart-type quasipolynomials inside the chambers of an induced chamber-decomposition of $L\otimes\mathbb{R}$. Moreover, the previous one-variable division can be generalized to two-variable functions of the form  $s(\mathbf{t})=B(\mathbf{t})/(1-\mathbf{t}^{a_1})^{d_1}(1-\mathbf{t}^{a_2})^{d_2}$ with $a_i>0$. In particular, for  $f_h(\mathbf{t}_{\mathcal{N}})$ viewed as a function in variables $t_n$ and $t_{n'}$, where $n,n'\in\mathcal{N}$ and there is an edge $\overline{nn'}$ connecting them in $\Gamma^{orb}$ (see \cite[Section 4.5]{LN} and \cite[Lemma 19]{LSz}). 
%
%

For more variables, the direct generalization using a division principle for the individual rational terms seems to be hopeless 
because the (Ehrhart) quasipolynomials associated with the counting functions can not be controlled inside the difficult chamber decomposition of $\mathcal{S}'_{\mathbb{R}}$. 

Nevertheless, the authors in \cite{LSz} have proposed a decomposition 
\begin{equation}
f_h(\mathbf{t}_{\mathcal{N}})=P_h(\mathbf{t}_{\mathcal{N}})+f^{-}_h(\mathbf{t}_{\mathcal{N}})
\end{equation}
which defines the polynomial part as 
\begin{equation}\label{eq:polpartdef}
P_{h}(\mathbf{t}_{\mathcal{N}}) = \sum_{\overline{nn'} \ edge \ of \ \Gamma^{orb}} P^{n,n'}_{h}(\mathbf{t}_{\mathcal{N}}) - \sum_{n\in \mathcal{N}} (\delta_{n,\mathcal{N}}-1)P_{h}^{n}(\mathbf{t}_{\mathcal{N}}), 
\end{equation}
where $P^n_h(\mathbf{t}_{\mathcal{N}})$ for any $n\in N$ are the polynomial parts given by the decompositions of $f_h(\mathbf{t}_{\mathcal{N}})$ as a one-variable function in $t_n$, while $P^{n,n'}_{h}(\mathbf{t}_{\mathcal{N}})$ are the polynomial parts viewed $f_h(\mathbf{t}_{\mathcal{N}})$ as a two-variable function in $t_n$ and $t_{n'}$ for any $n,n'\in\mathcal{N}$ so that there are connected by an edge in $\Gamma^{orb}$. Then \cite[Theorem 16]{LSz} implies the main property of the decomposition  
\begin{equation}\label{eq:polpsw}
 P_h(1)=\mathfrak{sw}^{norm}_h(M).
\end{equation}

\section{Decomposition by multivariable division and proof of the algorithm}

In this section we prove the algorithm which expresses the general multivariable polynomial part of \cite{LSz} in terms of a multivariable Euclidean division and a multiplicity function.

\subsection{Multivariable Euclidean division}\label{ss:multidivision}
We consider two Laurent polynomials $A(\mathbf{t}_{\mathcal{N}})$ and $B(\mathbf{t}_{\mathcal{N}})$ supported on the lattice $\pi_{\mathcal{N}}(L')$. The partial order  $l_{1}>l_{2}$ if $l_{1}-l_{2}=\sum_{v\in \mathcal{V}}\ell_{v}$ with $\ell_{v}>0$ for all $v\in \mathcal{V}$ on $L\otimes \mathbb{Q}$ induces a partial order on monomial terms and 

we assume that $A(\mathbf{t}_{\mathcal{N}})$ has a unique maximal monomial term with respect to this partial order denoted by $A_{a} \mathbf{t}_{\mathcal{N}}^{a}$ such that $a>0$. 

We introduce the following multivariable Euclidean division algorithm. We start with quotient $C=0$ and remainder $R=0$. For a monomial term $B_{b}\mathbf{t}_{\mathcal{N}}^{b}$ of $B(\mathbf{t}_{\mathcal{N}})$ if $b\not<a$ then we subtract $(B_{b}\mathbf{t}_{\mathcal{N}}^{b}/A_{a}\mathbf{t}_{\mathcal{N}}^{a})\cdot A(\mathbf{t}_{\mathcal{N}})$ from $B(\mathbf{t}_{\mathcal{N}})$ and we add $B_{b}\mathbf{t}_{\mathcal{N}}^{b}/A_{a}\mathbf{t}_{\mathcal{N}}^{a}$ to the quotient $C(\mathbf{t}_{\mathcal{N}})$, otherwise we pass $B_{b}\mathbf{t}_{\mathcal{N}}^{b}$ from $B(\mathbf{t}_{\mathcal{N}})$ to the remainder $R(\mathbf{t}_{\mathcal{N}})$. By the assumption on $A(\mathbf{t}_{\mathcal{N}})$ the algorithm terminates in finite steps and gives a unique decomposition
\begin{equation}
B(\mathbf{t}_{\mathcal{N}}) = C(\mathbf{t}_{\mathcal{N}}) \cdot A(\mathbf{t}_{\mathcal{N}}) + R(\mathbf{t}_{\mathcal{N}})
\end{equation}
such that $C(\mathbf{t}_{\mathcal{N}})$ is a supported on $\{l'\in \pi_{\mathcal{N}}(L'):l'\not<0\}$ and $R(\mathbf{t}_{\mathcal{N}})$ is supported on $\{l'\in \pi_{\mathcal{N}}(L'):l'<a\}$.

The following decomposition generalizes the one and two-variable cases.

\begin{lemma}\label{lem:+dec}
For any $h\in H$ there exists a unique decomposition  
\begin{equation}\label{eq:decomp}
f_{h}(\mathbf{t}_{\mathcal{N}}) = P^{+}_{h}(\mathbf{t}_{\mathcal{N}}) + f^{neg}_{h}(\mathbf{t}_{\mathcal{N}}),
\end{equation}
where $P^{+}_{h} (\mathbf{t}_{\mathcal{N}})= \sum_{\beta\in\mathcal{B}_h}p_{\beta}\mathbf{t}_{\mathcal{N}}^{\beta}$ is a Laurent polynomial such that $\beta\not<0$ and $f_{h}^{neg}(\mathbf{t}_{\mathcal{N}})$ is a rational function with negative degree in $t_{n}$ for all $n\in \mathcal{N}$.
\end{lemma}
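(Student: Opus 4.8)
The plan is to build the decomposition by applying the multivariable Euclidean division of \ref{ss:multidivision} to the rational function $f_h(\mathbf{t}_{\mathcal{N}})$ term by term. First I would write $f_h(\mathbf{t}_{\mathcal{N}})$, after putting it over a common denominator, in the form $B(\mathbf{t}_{\mathcal{N}})/A(\mathbf{t}_{\mathcal{N}})$, where $A(\mathbf{t}_{\mathcal{N}}) = \prod_{n\in\mathcal{N}}(1-\mathbf{t}_{\mathcal{N}}^{a_n})$ for suitable $a_n>0$ coming from the $E_n^*$ decorations of the nodes projected to the node variables. One must check that this $A$ satisfies the hypothesis imposed in \ref{ss:multidivision}, namely that it has a unique maximal monomial term $A_a\mathbf{t}_{\mathcal{N}}^a$ with $a>0$; here $a = \sum_{n\in\mathcal{N}}a_n$ and positivity follows because each $a_n$, being (the node-projection of) a positive combination of the $E_v^*$ inside the Lipman cone, is strictly positive in every node coordinate.

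Next I would run the division algorithm of \ref{ss:multidivision} on the pair $(A,B)$ to obtain $B = C\cdot A + R$ with $C$ supported on $\{l'\not<0\}$ and $R$ supported on $\{l'<a\}$. Dividing through by $A$ gives $f_h(\mathbf{t}_{\mathcal{N}}) = C(\mathbf{t}_{\mathcal{N}}) + R(\mathbf{t}_{\mathcal{N}})/A(\mathbf{t}_{\mathcal{N}})$. I would then set $P^+_h := C$ and $f^{neg}_h := R/A$. The polynomial property of $P^+_h$ and the support condition $\beta\not<0$ are immediate from the output of the algorithm. For the negative-degree claim I would argue one variable at a time: fixing $n\in\mathcal{N}$ and regarding everything as a one-variable rational function in $t_n$ (coefficients in the remaining node variables), the remainder $R$ has $t_n$-degree strictly less than the $t_n$-degree of $A$, because $R$ is supported on $\{l'<a\}$ and the $n$-th coordinate of $a$ is exactly $\deg_{t_n}A$; hence $R/A$ has negative degree in $t_n$. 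Doing this for every $n$ gives the required property of $f^{neg}_h$.

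For uniqueness, suppose $f_h = P^+_h + f^{neg}_h = \widetilde P + \widetilde f$ are two such decompositions. Then $P^+_h - \widetilde P = \widetilde f - f^{neg}_h$ is simultaneously a Laurent polynomial supported on $\{\beta\not<0\}$ and a rational function with negative $t_n$-degree for all $n$. Clearing the common denominator $A$, the left side becomes $A$ times such a polynomial while the right side is a Laurent polynomial supported strictly below $a$ in each node coordinate; comparing supports (the maximal term of $A$ being $A_a\mathbf{t}_{\mathcal{N}}^a$, one sees the product picks up a term at or above the leading monomial of the polynomial factor shifted by $a$, which cannot be cancelled) forces the polynomial factor to vanish. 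Equivalently, one invokes directly the uniqueness already asserted for the Euclidean division in \ref{ss:multidivision}, transported through multiplication by $A$.

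I expect the main obstacle to be the verification that $A$ genuinely has a \emph{unique} maximal monomial, i.e. that the partial order $>$ (which is only partial) singles out one term of $\prod_{n}(1-\mathbf{t}_{\mathcal{N}}^{a_n})$; this is where one must use that each $a_n$ is strictly positive in all coordinates — a consequence of formula \eqref{eq:DETsgr} and the connectedness of $\Gamma$ — so that the product of the $-\mathbf{t}_{\mathcal{N}}^{a_n}$ terms dominates all the mixed terms. A secondary subtlety is making the ``negative degree in $t_n$ for all $n$ simultaneously'' statement precise: negativity must hold after specializing each variable separately, and one should confirm that the single support condition $\operatorname{supp}(R)\subset\{l'<a\}$ indeed delivers all $|\mathcal{N}|$ one-variable degree bounds at once, which it does because $<$ compares all coordinates.
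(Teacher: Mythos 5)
Your proposal is correct and follows essentially the same route as the paper: write $f_h(\mathbf{t}_{\mathcal{N}})$ over the denominator $A(\mathbf{t}_{\mathcal{N}})=\prod_{n\in\mathcal{N}}(1-\mathbf{t}_{\mathcal{N}}^{a_n})$ with $a_n$ a positive multiple of $\pi_{\mathcal{N}}(E_n^*)$ (the structural fact the paper cites from \cite{LSznew}), note the unique maximal term $\pm\mathbf{t}_{\mathcal{N}}^{\sum_n a_n}$, run the multivariable Euclidean division of \ref{ss:multidivision}, take $P^+_h$ to be the quotient and $f^{neg}_h=R/A$, and deduce uniqueness from the support/degree constraints. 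You merely spell out a few points the paper leaves implicit (strict coordinatewise positivity of the $a_n$ via (\ref{eq:DETsgr}), the per-variable degree bound, and the support comparison for uniqueness, where ``leading monomial'' should read ``a maximal monomial'' for the partial order), so no substantive difference.
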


\begin{proof}
First of all we use the fact that for any $h\in H$ one can write $f_{h}(\mathbf{t}_{\mathcal{N}}) = \mathbf{t}_{\mathcal{N}}^{r_h}\cdot\sum_{\ell}b_{\ell} \mathbf{t}_{\mathcal{N}}^{\ell}/\prod_{n\in \mathcal{N}}(1 - \mathbf{t}_{\mathcal{N}}^{a_{n}})$, where $\ell, a_{n}\in \mathbb{Z}\langle E_{n}\rangle_{n\in \mathcal{N}}$ so that $a_{n} =  \lambda_{n}\pi_{\mathcal{N}}(E^{*}_{n})$ for some $\lambda_{n}>0$, $\ell\in \mathbb{R}_{\geq 0}\langle a_{n}\rangle_{n\in\mathcal{N}}$ and $b_{\ell}\in\mathbb{Z}$ (for more precise formulation see \cite{LSznew}). 
Note that $A(\mathbf{t}_{\mathcal{N}})=\prod_{n\in \mathcal{N}}(1-\mathbf{t}_{\mathcal{N}}^{a_{n}})$ has a unique maximal term $(-1)^{|\mathcal{N}|}\mathbf{t}_{\mathcal{N}}^{\sum_{n\in \mathcal{N}}a_{n}}$ with $\sum_{n\in \mathcal{N}}a_{n}>0$. Thus, by the above multivariable Euclidean division we can write 
\begin{equation}\label{eq:uniqueness}
\mathbf{t}_{\mathcal{N}}^{r_{h}} \sum_{\ell} b_{\ell} \mathbf{t}_{\mathcal{N}}^{\ell} = P^{+}_{h}(\mathbf{t}_{\mathcal{N}}) \cdot \prod_{n\in \mathcal{N}}(1-\mathbf{t}_{\mathcal{N}}^{a_{n}})+R_{h}(\mathbf{t}_{\mathcal{N}})
\end{equation} 
and we set $f^{neg}_{h}(\mathbf{t}_{\mathcal{N}}) := \frac{R_{h}(\mathbf{t}_{\mathcal{N}})}{\prod_{n\in \mathcal{N}}(1-\mathbf{t}_{\mathcal{N}}^{a_{n}})}$.

The uniqueness is followed by the assumptions on $P_{h}^{+}$ and $f_{h}^{neg}$, since (\ref{eq:uniqueness}) can be viewed as a one-variable relation considering other variables as coefficients.

%

\end{proof}

\subsection{Multiplicity and relation to the polynomial part}\label{ss:multiplicity} We will show that the polynomial part can be computed from the multivariable quotient $P^{+}_{h}$ by taking its monomial terms with a suitable multiplicity. 
We start by defining the following type of partial orders $\{\mathcal{N},>\}$.
Choose a node $n_{0}\in \mathcal{N}$ and orient edges of $\Gamma^{orb}$ (cf. \ref{ss:intro3}) towards to the direction of $n_{0}$. This induces a partial order on the set of nodes: $n>n'$ if there is an edge in $\Gamma^{orb}$ connecting them, oriented from $n$ to $n'$. Note that $n_{0}$ is the unique minimal node with respect to this partial order. 

\begin{definition} 
Associated with the above partial order and a monomial $\mathbf{t}_{\mathcal{N}}^{\beta} = \prod_{n\in \mathcal{N}}t_{n}^{\beta_{n}}$, we define the following two sign-functions: 
$\mathfrak{s}_{n}(\beta)=1$ if $\beta_{n}\geq 0$ and $0$ otherwise, respectively, assuming $n>n'$ for some $n,n'\in \mathcal{N}$ we set $\mathfrak{s}_{n>n'}(\beta)=1$ if $\beta_{n}\geq0$ and $\beta_{n'}<0$, and $0$ otherwise.
Finally, we define the \emph{multiplicity function} $\mathfrak{s}(\beta) = \mathfrak{s}_{n_{0}}(\beta) + \sum_{n>n'}\mathfrak{s}_{n>n'}(\beta)$.
\end{definition}
\begin{remark}%
The function $\mathfrak{s}$ does not depend on the above partial orders. This can be checked easily for two partial orders with unique minimal nodes connected by an edge in $\Gamma^{orb}$.
\end{remark}

\begin{thm}\label{lm-0}
Consider the multivariable quotient $P^{+}_{h}(\mathbf{t}_{\mathcal{N}})=\sum_{\beta\in \mathcal{B}_h}p_{\beta}\mathbf{t}_{\mathcal{N}}^{\beta}$  of $f_{h}$. 
Then the polynomial part defined in (\ref{eq:polpartdef}) has the following form
$$
P_{h}(\mathbf{t}_{\mathcal{N}}) = \sum_{\beta\in \mathcal{B}_h}\mathfrak{s}(\beta) p_{\beta}\mathbf{t}_{\mathcal{N}}^{\beta}.
$$

\end{thm}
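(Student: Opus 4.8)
The plan is to compare the multivariable quotient $P^+_h$ produced by the Euclidean division of Lemma~\ref{lem:+dec} with the one-variable quotients $P^n_h$ and two-variable quotients $P^{n,n'}_h$ appearing in the definition~(\ref{eq:polpartdef}) of $P_h$. The key observation is that the division producing $P^+_h$ removes, from the common denominator $\prod_{n\in\mathcal N}(1-\mathbf t_{\mathcal N}^{a_n})$, a factor for \emph{every} node simultaneously, whereas $P^n_h$ removes only the factor $(1-\mathbf t_{\mathcal N}^{a_n})$ and $P^{n,n'}_h$ only the pair $(1-\mathbf t_{\mathcal N}^{a_n})(1-\mathbf t_{\mathcal N}^{a_{n'}})$. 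So first I would record, for a single rational fraction $\mathbf t_{\mathcal N}^{b}/\prod_{n}(1-\mathbf t_{\mathcal N}^{a_n})$, an explicit closed form for which monomials land in the polynomial part in each of the three procedures: as in~(\ref{eq:div}), the contribution to $P^{+}_h$ consists of the monomials $b-\sum_n x_n a_n$ with all $x_n\ge 1$ and the exponent $\not<0$; the contribution to $P^n_h$ consists of those with $x_n\ge 1$, the other $x_{n'}=0$, and the $t_n$-degree $\ge 0$; the contribution to $P^{n,n'}_h$ with $x_n,x_{n'}\ge 1$, rest zero, and both relevant degrees $\ge 0$. Since all these are supported on a single coset of $\pi_{\mathcal N}(L')$, and the multiplicity function $\mathfrak s$ depends only on the signs of the exponents, it suffices to prove the coefficient identity monomial by monomial on exponents $\beta$.

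Next I would fix an exponent $\beta$ with $\beta\not<0$ and sort the nodes according to whether $\beta_n\ge 0$ or $\beta_n<0$; write $\mathcal N_+=\{n:\beta_n\ge0\}$ and $\mathcal N_-=\{n:\beta_n<0\}$. The idea is that a given source fraction $\mathbf t_{\mathcal N}^{b}\cdot(\text{unit})/\prod(1-\mathbf t^{a_n})$ contributes the monomial $\mathbf t_{\mathcal N}^{\beta}$ to $P^+_h$ via a unique exponent vector $(x_n)_{n\in\mathcal N}$, $x_n\ge1$, with $\beta=b-\sum x_na_n$; because the $a_n$ are positive multiples of $\pi_{\mathcal N}(E^*_n)$ and these project to a basis-like positive system, the node-$n$ degree of $b-\sum_n x_n a_n$ is controlled essentially by $x_n$, so $\beta_n\ge0$ happens iff $x_n$ has not been pushed too far. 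I would then argue that the same source fraction contributes $\mathbf t_{\mathcal N}^\beta$ to the one-node quotient $P^{n_0}_h$ precisely when $n_0\in\mathcal N_+$ — this is the $\mathfrak s_{n_0}(\beta)$ term — and contributes to the two-node quotient $P^{n,n'}_h$ precisely when $n\in\mathcal N_+$ and $n'\in\mathcal N_-$ along the oriented edge $n>n'$, matching $\mathfrak s_{n>n'}(\beta)$. Summing~(\ref{eq:polpartdef}) over the edges and nodes of $\Gamma^{orb}$ then telescopes to $\mathfrak s_{n_0}(\beta)+\sum_{n>n'}\mathfrak s_{n>n'}(\beta)=\mathfrak s(\beta)$ copies of $p_\beta\mathbf t_{\mathcal N}^\beta$, which is exactly the claimed formula; the $-(\delta_{n,\mathcal N}-1)$ coefficients in~(\ref{eq:polpartdef}) are accounted for because each internal node is the `$>$' endpoint of $\delta_{n,\mathcal N}-1$ oriented edges but should be counted once, so the overcounting in the edge-sum is corrected by the node-sum; this is the combinatorial bookkeeping that makes $\mathfrak s$ independent of the chosen orientation, as noted in the Remark.

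The main obstacle I anticipate is making the step ``$\beta_n\ge 0 \iff x_n$ lies in such-and-such range'' rigorous when several nodes interact: the $a_n=\lambda_n\pi_{\mathcal N}(E^*_n)$ are not the standard basis vectors, so the $t_m$-degree of $b-\sum_n x_n a_n$ mixes all the $x_n$'s through the off-diagonal entries $(E^*_n,E^*_m)<0$, and one must verify that the sign of the $t_n$-component is nonetheless governed in the required monotone way — i.e., that increasing $x_n$ while keeping the others fixed strictly decreases the $n$-th degree, so that the ``stop'' in the division happens exactly at the threshold dictated by $\mathfrak s_n$. This is where the negative definiteness of the intersection form and the fact that all $(E^*_n,E^*_m)$ have the same (negative) sign, i.e. that $\mathbf t_{\mathcal N}^{a_n}$ is a genuinely positive monomial in each variable, will be used. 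Once that monotonicity is in hand, I would also need to check compatibility across distinct source fractions $\ell$ in the numerator $\sum_\ell b_\ell \mathbf t_{\mathcal N}^\ell$: since the division is $\mathbb Z$-linear and each $\ell$ contributes on the \emph{same} coset, linearity in $b_\ell$ lets the monomial-wise argument go through verbatim. The two-variable base case needed here is already supplied by~\cite[Section 4.5]{LN} and~\cite[Lemma 19]{LSz}, and the one-variable case by~(\ref{eq:div}), so the real content is the reduction of the $|\mathcal N|$-variable division to these, plus the sign/telescoping bookkeeping over $\Gamma^{orb}$.
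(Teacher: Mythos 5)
Your overall plan --- realizing $P^{+}_{h}$, the $P^{n}_{h}$'s and the $P^{n,n'}_{h}$'s as selections of monomials from one common expansion of the fractions $\mathbf{t}_{\mathcal{N}}^{b}/\prod_{n}(1-\mathbf{t}_{\mathcal{N}}^{a_{n}})$ and then doing sign bookkeeping over $\Gamma^{orb}$ --- is the paper's strategy, but your explicit description of the one- and two-variable polynomial parts is incorrect, and that is precisely the step the theorem rests on. Since $a_{m}=\lambda_{m}\pi_{\mathcal{N}}(E^{*}_{m})$ has \emph{all} coordinates strictly positive (all entries $-(E^{*}_{n},E^{*}_{m})$ are positive), every factor $1-\mathbf{t}_{\mathcal{N}}^{a_{m}}$ of the denominator has positive degree in the single variable $t_{n}$; hence the one-variable decomposition of $f_{h}$ in $t_{n}$ divides by the \emph{whole} product, and its polynomial part consists of terms $b-\sum_{m}x_{m}a_{m}$ with all $x_{m}\geq 1$ (the same exponent set as for $P^{+}_{h}$), singled out only by the condition $\beta_{n}\geq 0$. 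It is not given by ``$x_{n}\geq 1$, the other $x_{n'}=0$'': with that description $P^{n}_{h}$ would not even be a polynomial in $t_{n}$, and its monomials would in general not occur among those of $P^{+}_{h}$, so the monomial-by-monomial comparison could not start. The clean way to make the selection rigorous (and it removes the ``monotonicity in $x_{n}$'' obstacle you anticipate, which is a red herring) is the paper's: the one- and two-variable decompositions are the \emph{same} Euclidean division run with the coarser stopping orders $<_{n'}$ and $<_{n,n'}$, and since $\beta\not<_{n'}0$ and $\beta\not<_{n,n'}0$ both imply $\beta\not<0$, one gets $P^{n'}_{h}=\sum_{\beta_{n'}\geq0}p_{\beta}\mathbf{t}_{\mathcal{N}}^{\beta}$ and $P^{n,n'}_{h}=\sum_{\beta_{n}\geq0\ \textnormal{or}\ \beta_{n'}\geq0}p_{\beta}\mathbf{t}_{\mathcal{N}}^{\beta}$ as sub-sums of $P^{+}_{h}$.

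Second, your membership claim for the two-variable quotient is wrong (and inconsistent with your own first paragraph, where you require both degrees $\geq0$): a term lies in $P^{n,n'}_{h}$ iff $\beta_{n}\geq0$ \emph{or} $\beta_{n'}\geq0$, not iff $\beta_{n}\geq0$ and $\beta_{n'}<0$; the latter condition only captures the piece $\mathfrak{s}_{n>n'}(\beta)$ of the split $\mathfrak{s}_{n'}(\beta)+\mathfrak{s}_{n>n'}(\beta)$. With your characterizations the bookkeeping in (\ref{eq:polpartdef}) does not close: for $\beta$ with all coordinates nonnegative your edge-sum contributes nothing while the node-sum subtracts $\sum_{n}(\delta_{n,\mathcal{N}}-1)=|\mathcal{N}|-2$, so you would get $2-|\mathcal{N}|$ instead of $\mathfrak{s}(\beta)=1$ as soon as $\Gamma^{orb}$ has at least two vertices. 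The correct count decomposes $P^{n,n'}_{h}$ as above and uses $\#\{n:n>n'\}=\delta_{n',\mathcal{N}}-1$ for $n'\neq n_{0}$ together with $\#\{n:n>n_{0}\}=\delta_{n_{0},\mathcal{N}}$, which turns (\ref{eq:polpartdef}) into $\mathfrak{s}_{n_{0}}(\beta)+\sum_{n>n'}\mathfrak{s}_{n>n'}(\beta)=\mathfrak{s}(\beta)$. So the skeleton of your argument is the right one, but both the identification of the one/two-variable quotients inside $P^{+}_{h}$ and the final telescoping must be corrected along these lines.
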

\begin{proof}

Recall that the polynomial part $P_{h}(\mathbf{t}_{\mathcal{N}})$ is defined by (\ref{eq:polpartdef}) using the polynomials $P_{h}^{n'}(\mathbf{t}_{\mathcal{N}})$ and $P_{h}^{n,n'}(\mathbf{t}_{\mathcal{N}})$ for any $n,n'\in\mathcal{N}$ for which there exists an edge connecting them in $\Gamma^{orb}$. 
Moreover, $P^{n'}_{h}$ and $P^{n,n'}_{h}$ are results of one- and two-variable divisions in variables $t_{n'}$ and $t_{n},t_{n'}$, while considering other variables as coefficients. These divisions can be deduced by the above algorithm if we replace the partial order on $L\otimes \mathbb{Q}$ by the corresponding projections `$<_{n'}$' and `$<_{n,n'}$'. That is, $a<_{n'}b$ and $a<_{n,n'}b$ if $a_{n'}<b_{n'}$ and $a_{n}<b_{n}$, $a_{n'}<b_{n'}$, respectively. Since  $a\not<_{n'}b$ and $a\not<_{n,n'}b$ both  imply $a\not<b$, the monomial terms of $P^{n'}_{h}$ and $P^{n,n'}_{h}$ can be found among  monomial terms of $P^{+}_{h}$, more precisely 
\begin{gather*}
P^{n'}_{h}(\mathbf{t}_{\mathcal{N}}) = \sum_{\substack{\beta\in \mathcal{B}_{h} \\ \beta_{n'}\geq0}} p_{\beta} \mathbf{t}_{\mathcal{N}}^{\beta} = \sum_{\beta \in \mathcal{B}_{h}}\mathfrak{s}_{n'}(\beta) p_{\beta}\mathbf{t}_{\mathcal{N}}^{\beta},
\\
P^{n,n'}_{h}(\mathbf{t}_{\mathcal{N}}) = \sum_{\substack{\beta\in \mathcal{B}_{h} \\ \beta_{n}\textnormal{ or } \beta_{n'}\geq0}} p_{\beta} \mathbf{t}_{\mathcal{N}}^{\beta} = \sum_{\beta \in \mathcal{B}_{h}}(\mathfrak{s}_{n'}(\beta) + \mathfrak{s}_{n>n'}(\beta)) p_{\beta}\mathbf{t}_{\mathcal{N}}^{\beta}
\end{gather*}
(assuming that $n>n'$). Thus,
\begin{align*}
P_{h}(\mathbf{t}_{\mathcal{N}}) 
=& 
\sum_{n>n'} P^{n,n'}_{h}(\mathbf{t}_{\mathcal{N}}) - \sum_{n'\in \mathcal{N}} (\delta_{n',\mathcal{N}}-1)P_{h}^{n'}(\mathbf{t}_{\mathcal{N}}) 
\\
=& 
\sum_{n>n'}  \sum_{\beta\in \mathcal{B}_{h}}(\mathfrak{s}_{n'}(\beta)+\mathfrak{s}_{n>n'}(\beta))p_{\beta}\mathbf{t}_{\mathcal{N}}^{\beta}
 - 
 \sum_{n'\in \mathcal{N}}(\delta_{n',\mathcal{N}}-1)\sum_{\beta\in \mathcal{B}_{h}}\mathfrak{s}_{n'}(\beta)p_{\beta} \mathbf{t}_{\mathcal{N}}^{\beta}\\
=&
\sum_{\beta\in \mathcal{B}_{h}} \Big[ \sum_{n>n'} \mathfrak{s}_{n'}(\beta)+\mathfrak{s}_{n>n'}(\beta) - \sum_{n'\in \mathcal{N}}(\delta_{n',\mathcal{N}}-1)\mathfrak{s}_{n'}(\beta) 
\Big] p_{\beta} \mathbf{t}_{\mathcal{N}}^{\beta}
\\
=&
\sum_{\beta \in \mathcal{B}_{h}} (\mathfrak{s}_{n_{0}}(\beta) + \sum_{n>n'}\mathfrak{s}_{n>n'}(\beta))p_{\beta} \mathbf{t}_{\mathcal{N}} 
=\sum_{\beta \in \mathcal{B}_{h}} \mathfrak{s}(\beta) p_{\beta}\mathbf{t}_{\mathcal{N}},
\end{align*}
since $\#\{n \,|\, n>n' \}=\delta_{n',\mathcal{N}}-1$ for $n'\neq n_{0}$ and $\#\{n \,|\, n>n_{0} \}=\delta_{n_{0},\mathcal{N}}$, where $n_{0}$ is the unique minimal node with respect to the partial order.
\end{proof}

\begin{remark}\label{rk:poly-plus} 
\begin{enumerate}[(i)]
\item\label{poly-plus-i}
For $\beta < 0$  we have $\mathfrak{s}(\beta)=0$, while for $\beta \not< 0$ we have $\mathfrak{s}(\beta)\geq1$. Hence, the multiplicity $\mathfrak{s}(\beta)$ is non-zero for every $\beta\in \mathcal{B}_h$, thus every monomial of $P^{+}_{h}$ appears in $P_{h}$.

\item\label{poly-plus-ii} 
The reduced Poincar\'e series $Z_{h}(\mathbf{t}_{\mathcal{N}})$  is the Taylor expansion of $f_{h}(\mathbf{t}_{\mathcal{N}})$ considering $\mathbf{t}_{\mathcal{N}}$ small. One can think of the `endless' multivariable Euclidean division  as expansion of $f_{h}(\mathbf{t}_{\mathcal{N}})$ considering $\mathbf{t}_{\mathcal{N}}$ large. If we take each term of this latter expansion with multiplicity $\mathfrak{s}$ then we recover $P_{h}$, since terms with negative degree in each $t_{n}$ have zero multiplicity.
\end{enumerate}
\end{remark}

\section{Comparisons, examples and $P^+$}

The aim of this section is to compare the two polynomials $P_{h}(\mathbf{t}_{\mathcal{N}})$ and $P^{+}_{h}(\mathbf{t}_{\mathcal{N}})$, given by the two different decompositions, through crucial classes of negative definite plumbing graphs. 

In case of the first class, when the orbifold graph is a bamboo, we will prove that the two polynomials agree. The second class is also motivated by singularity theory and contains the graphs of the manifolds $S^3_{-p/q}(K)$ where $K\subset S^3$ is the connected sum of algebraic knots. Although this class gives examples when the two polynomials do not agree, their structure can be understood using some specialty of these manifolds.

\subsection{The orbifold graph is a bamboo}\label{ss:orb}

Let $\Gamma$ be a negative definite plumbing graph with set of nodes $\mathcal{N}=\{n_{1},\ldots,n_{k}\}$. In this section we will assume that its orbifold graph $\Gamma^{orb}$ is a \emph{bamboo}, ie. $\Gamma^{orb}$ has no nodes.\\ 
\begin{center}
\begin{tikzpicture}
\def\mydotsize{0.04}
\def\mytextsize{\small}
\node (n1) at (0,0) {};
\node at (0,-0.3) {\mytextsize $n_{1}$};
\draw[fill] (n1) circle (\mydotsize);
\node (n2) at (1,0) {};
\node at (1,-0.3) {\mytextsize $n_{2}$};
\draw[fill] (n2) circle (\mydotsize);
\node (nk-1) at (3,0) {};
\node at (3,-0.3) {\mytextsize $n_{k-1}$};
\draw[fill] (nk-1) circle (\mydotsize);
\node (nk) at (4,0) {};
\node at (4,-0.3) {\mytextsize $n_{k}$};
\draw[fill] (nk) circle (\mydotsize);
\draw (0,0) -- (1.5,0);
\draw (2.5,0) -- (4,0);
\node at (2,0) {$\ldots$};
\end{tikzpicture}
\end{center}%
Then we have the following result:
\begin{thm}\label{thm:bamboo}
If the orbifold graph $\Gamma^{orb}$ is a bamboo then 
$P_{h}(\mathbf{t}_{\mathcal{N}}) = P^{+}_{h}(\mathbf{t}_{\mathcal{N}})$ for any $h\in H$, ie. every monomial term of $P^{+}_{h}(\mathbf{t}_{\mathcal{N}})$ appears in $P_{h}(\mathbf{t}_{\mathcal{N}})$ with multiplicity $1$.
\end{thm}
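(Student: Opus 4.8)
The plan is to reduce everything, via Theorem~\ref{lm-0}, to a purely combinatorial statement about the multiplicity function $\mathfrak{s}$ on a bamboo. Since Theorem~\ref{lm-0} already gives $P_h(\mathbf{t}_{\mathcal{N}}) = \sum_{\beta\in\mathcal{B}_h}\mathfrak{s}(\beta)p_\beta\mathbf{t}_{\mathcal{N}}^\beta$ and $P^+_h(\mathbf{t}_{\mathcal{N}}) = \sum_{\beta\in\mathcal{B}_h}p_\beta\mathbf{t}_{\mathcal{N}}^\beta$, and since every $\beta\in\mathcal{B}_h$ satisfies $\beta\not<0$, it suffices to show: if $\Gamma^{orb}$ is a bamboo, then $\mathfrak{s}(\beta)=1$ for every $\beta$ with $\beta\not<0$. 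So the first step is to fix the orientation of $\Gamma^{orb}$: number the nodes $n_1,\dots,n_k$ along the bamboo and pick $n_0=n_k$ (say), so the induced partial order is the linear chain $n_1>n_2>\cdots>n_k$, with edges $n_i>n_{i+1}$ for $i=1,\dots,k-1$. Then $\mathfrak{s}(\beta)=\mathfrak{s}_{n_k}(\beta)+\sum_{i=1}^{k-1}\mathfrak{s}_{n_i>n_{i+1}}(\beta)$.

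The second step is the counting argument. Write $\varepsilon_i = 1$ if $\beta_{n_i}\geq 0$ and $\varepsilon_i=0$ if $\beta_{n_i}<0$, for $i=1,\dots,k$. Then $\mathfrak{s}_{n_k}(\beta)=\varepsilon_k$ and $\mathfrak{s}_{n_i>n_{i+1}}(\beta)=\varepsilon_i(1-\varepsilon_{i+1})$, which is $1$ exactly when $\varepsilon_i=1,\varepsilon_{i+1}=0$, i.e. when there is a ``descent'' from a nonnegative to a negative coordinate at position $i$. Hence
\begin{equation*}
\mathfrak{s}(\beta) = \varepsilon_k + \#\{\, i\in\{1,\dots,k-1\} : \varepsilon_i=1,\ \varepsilon_{i+1}=0 \,\}.
\end{equation*}
Reading the sequence $(\varepsilon_1,\dots,\varepsilon_k)$ as a word in $\{0,1\}$, this quantity counts the number of maximal blocks of consecutive $1$'s (a block of $1$'s either ends with a $0$ immediately after it, contributing via a descent, or runs to the end at position $k$, contributing via the $\varepsilon_k$ term). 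Therefore $\mathfrak{s}(\beta)$ equals the number of maximal runs of nonnegative coordinates in $(\beta_{n_1},\dots,\beta_{n_k})$; in particular $\mathfrak{s}(\beta)\geq 1$ iff at least one $\beta_{n_i}\geq 0$, and $\mathfrak{s}(\beta)=0$ iff all $\beta_{n_i}<0$, i.e. iff $\pi_{\mathcal{N}}(\beta)<0$.

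The third step is to show that for $\beta\in\mathcal{B}_h$ one in fact has $\mathfrak{s}(\beta)=1$, not merely $\geq 1$. Equivalently I must rule out $\beta$ having two separated blocks of nonnegative coordinates, i.e. a pattern $\beta_{n_i}\geq 0$, $\beta_{n_{i+1}}<0,\dots,\beta_{n_{j-1}}<0$, $\beta_{n_j}\geq 0$ with $i<j$. Here is where the bamboo hypothesis on $\Gamma^{orb}$ must really be used together with the structure of $f_h$: the support $\mathcal{B}_h$ of $P^+_h$ is constrained by the factorization used in Lemma~\ref{lem:+dec}, namely $\mathbf{t}_{\mathcal{N}}^{r_h}\sum_\ell b_\ell\mathbf{t}_{\mathcal{N}}^\ell$ modulo $\prod_{n\in\mathcal{N}}(1-\mathbf{t}_{\mathcal{N}}^{a_n})$ where $a_n=\lambda_n\pi_{\mathcal{N}}(E_n^*)$. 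The key geometric input is that, for a bamboo $\Gamma^{orb}$, the reduced $E_n^*$ have a monotone/one-dimensional-interval behaviour along the chain, so that the exponents $\beta$ of the quotient are forced to lie in a region where the sign pattern of the coordinates can change at most once; concretely, one should invoke the description of $\mathcal{B}_h$ (or of the two-variable parts $P^{n,n'}_h$ of \cite{LN}, \cite{LSz}) to see that consecutive coordinates $\beta_{n_i},\beta_{n_{i+1}}$ are linked so that $\beta_{n_i}<0$ forces $\beta_{n_j}<0$ for all $j>i$. This is the main obstacle: establishing that the support of $P^+_h$ on a bamboo never has a ``gap'' of negative coordinates between two nonnegative ones. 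Once that monotonicity of the sign pattern is in hand, the word $(\varepsilon_1,\dots,\varepsilon_k)$ is of the form $1\cdots1\,0\cdots0$, hence has exactly one block of $1$'s (when $\beta\not<0$, the block is nonempty), so $\mathfrak{s}(\beta)=1$ and $P_h=P^+_h$ follows term by term. I would prove the gap-freeness either directly from the explicit formula for $f_h$ in \cite{LSznew}, or by an induction on the bamboo using the two-variable division result \cite[Lemma 19]{LSz} applied to each edge $\overline{n_in_{i+1}}$, propagating the sign constraint along the chain.
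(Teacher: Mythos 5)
Your reduction is sound as far as it goes: invoking Theorem \ref{lm-0}, it suffices to show $\mathfrak{s}(\beta)=1$ for every $\beta\in\mathcal{B}_h$, and your identification of $\mathfrak{s}(\beta)$ (for the linear order $n_1>\cdots>n_k$) with the number of maximal runs of nonnegative coordinates in $(\beta_{n_1},\dots,\beta_{n_k})$ is exactly the bookkeeping the paper performs in the proof of Proposition \ref{prop-1}. But the entire mathematical content of the theorem is the statement you flag as ``the main obstacle'' and then leave unproved: that the sign pattern of an exponent of $P^+_h$ has a single block of nonnegative entries. The paper devotes Lemmas \ref{lm-1}--\ref{lm-3} to precisely this. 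The mechanism is: when $\Gamma^{orb}$ is a bamboo, $f_h$ is a combination of fractions with the \emph{two-factor} denominator $(1-\mathbf{t}_{\mathcal{N}}^{\lambda_1\mathfrak{v}_1})(1-\mathbf{t}_{\mathcal{N}}^{\lambda_k\mathfrak{v}_k})$ (only the two extremal nodes, by \cite{LSznew}), not the general $|\mathcal{N}|$-factor denominator you quote from Lemma \ref{lem:+dec}; hence every exponent of the quotient lies in the planar affine cone $\alpha-\mathbb{R}_{\geq0}\langle\mathfrak{v}_1,\mathfrak{v}_k\rangle$. One then shows that the coordinate hyperplane traces $\mathcal{E}_i=\{\beta_i=0\}$ cut this cone in a nested, ordered fashion (Lemmas \ref{lm-2} and \ref{lm-21}), which rests on determinantal inequalities such as $\det_{\Gamma\setminus[n_1,n_i]}\det_{\Gamma\setminus[n_{i+1},n_\ell]}-\det_{\Gamma\setminus[n_1,n_{i+1}]}\det_{\Gamma\setminus[n_i,n_\ell]}>0$, proved by Duchon-type edge removal reducing $\Gamma$ to a bamboo and a determinant identity from \cite{LSznew}; a crossing argument along the segment from $\alpha$ to $\beta$ (Lemmas \ref{lm-3} and \ref{lm-1}) then yields the single-block configuration. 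None of this is supplied or replaced by an alternative argument in your proposal, so the proof is incomplete at its crucial step.

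A secondary but real problem: the sign constraint you propose to propagate is wrong as stated. You claim that for a bamboo $\beta_{n_i}<0$ forces $\beta_{n_j}<0$ for all $j>i$, i.e.\ the word $(\varepsilon_1,\dots,\varepsilon_k)$ has the form $1\cdots1\,0\cdots0$. The correct statement (Lemma \ref{lm-1}) is weaker: the pattern is $\beta_1,\dots,\beta_{i-1}<0\leq\beta_i,\dots,\beta_j\geq0>\beta_{j+1},\dots,\beta_k$, so negative coordinates may occur \emph{before} the nonnegative block as well; only the single-block property holds, and that is all one needs for $\mathfrak{s}(\beta)=1$. Had you tried to prove your stronger monotone claim (e.g.\ by inducting with the two-variable divisions along edges), you would not succeed, so the formulation itself needs to be corrected before any completion of the argument.
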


Denote by $\mathfrak{v}_{i}:=\pi_{\mathcal{N}}(E^{*}_{n_{i}})$ the projected vectors for all $i=1,\ldots,k$. 
When $\Gamma^{orb}$ is a bamboo we can write $f_{h}(\mathbf{t}_{\mathcal{N}})$  as linear combination of fractions of form $\displaystyle \frac{\mathbf{t}_{\mathcal{N}}^{\alpha}}{(1-\mathbf{t}_{\mathcal{N}}^{\lambda_{1}\mathfrak{v}_{1}})(1-\mathbf{t}_{\mathcal{N}}^{\lambda_{k}\mathfrak{v}_{k}})}$ for some $ \alpha \in \mathbb{R}_{\geq0}\langle \mathfrak{v}_{i}\rangle_{i=\overline{1,k}} \cap \mathbb{Z}\langle \pi_{\mathcal{N}}(E^{*}_{v})\rangle_{v\in \mathcal{V}}$ and $\lambda_{1},\lambda_{k}>0$ (cf. \cite{LSznew}).
By the uniqueness of the decomposition (\ref{eq:decomp}) and Theorem \ref{lm-0} it is enough to prove the following proposition.

%

\begin{prop}\label{prop-1}
Let $\alpha\in \mathbb{R}_{\geq0}\langle \mathfrak{v}_{i}\rangle_{i=\overline{1,k}} \cap \mathbb{Z}\langle \pi_{\mathcal{N}}(E^{*}_{v})\rangle_{v\in \mathcal{V}}$ and consider the following fraction $\displaystyle\varphi(\mathbf{t}_{\mathcal{N}})=\frac{\mathbf{t}_{\mathcal{N}}^{\alpha}}{(1-\mathbf{t}_{\mathcal{N}}^{\lambda_{1}\mathfrak{v}_{1}})(1-\mathbf{t}_{\mathcal{N}}^{\lambda_{k}\mathfrak{v}_{k}})}$, $\lambda_{1},\lambda_{k}>0$. 
Then for any monomial $\mathbf{t}_{\mathcal{N}}^{\beta}$ of the quotient $\varphi^{+}$ given by the decomposition $\varphi=\varphi^{+}+\varphi^{neg}$ of Lemma \ref{lem:+dec} one has $\mathfrak{s}(\beta)=1$.
\end{prop}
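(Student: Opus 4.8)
The plan is to track exactly how the variable-exponents $\beta_{n_i}$ behave under the multivariable Euclidean division of $\varphi$ and then to verify that the multiplicity function $\mathfrak{s}$ evaluates to $1$ on every such $\beta$. The key structural point is that the denominator of $\varphi$ involves only the two ``end nodes'' $n_1$ and $n_k$ of the bamboo $\Gamma^{orb}$, so in the division only the exponents along the directions $\mathfrak{v}_1$ and $\mathfrak{v}_k$ get shifted; the remaining coordinates of $\beta$ are inherited rigidly from $\alpha$ (up to integer multiples of the two vectors $\lambda_1\mathfrak v_1$, $\lambda_k\mathfrak v_k$). First I would make this precise: writing $\beta = \alpha - x_1\lambda_1\mathfrak{v}_1 - x_k\lambda_k\mathfrak{v}_k$ for the exponent produced after $x_1$ steps in the first direction and $x_k$ steps in the last, I want to record the sign of each coordinate $\beta_{n_i}$ in terms of $x_1,x_k$ and the (known, all-positive off-diagonal) entries of $I^{-1}$ via \eqref{eq:DETsgr}.

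Next, the crucial monotonicity claim: because all entries $-(E^*_v,E^*_w)$ are strictly positive, the coordinate $\beta_{n_i} = \alpha_{n_i} - x_1\lambda_1 (\mathfrak v_1)_{n_i} - x_k\lambda_k(\mathfrak v_k)_{n_i}$ is a decreasing function of both $x_1$ and $x_k$, and moreover moving along the bamboo from $n_1$ to $n_k$ the ``$\mathfrak v_1$-component'' $( \mathfrak v_1)_{n_i}$ decreases while the ``$\mathfrak v_k$-component'' $(\mathfrak v_k)_{n_i}$ increases. I would use this to show that, along the path $n_1, n_2, \ldots, n_k$ in $\Gamma^{orb}$, the sign pattern of $(\beta_{n_1}, \ldots, \beta_{n_k})$ — once we know $\beta \not< 0$, which holds for every monomial of $\varphi^+$ by Lemma~\ref{lem:+dec} — is forced to be of a very restricted shape: there is at most one ``sign change'' from nonnegative to negative as one reads the coordinates in a suitable direction, or the signs are split as ``nonnegative at one end, negative in the middle, nonnegative at the other end'' is impossible and only a single block of nonnegatives occurs. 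Concretely, I expect to prove that the set $\{\, n_i : \beta_{n_i}\ge 0 \,\}$ is always an \emph{interval} of the bamboo containing at least one of the two ends.

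Given that combinatorial fact, computing $\mathfrak{s}(\beta)$ is then routine. Orient the bamboo towards, say, $n_0 = n_1$ as the minimal node. Then $\mathfrak{s}(\beta) = \mathfrak{s}_{n_1}(\beta) + \sum_{i=2}^{k}\mathfrak{s}_{n_i > n_{i-1}}(\beta)$, and each summand detects precisely a ``nonnegative-to-negative'' transition along the edge $\overline{n_i n_{i-1}}$ (plus the value at the end node). Since $\{n_i : \beta_{n_i}\ge 0\}$ is an interval touching an end, there is exactly one such transition in total (either at the $n_1$-end, contributing via $\mathfrak{s}_{n_1}$, or at some interior edge, contributing via exactly one $\mathfrak s_{n_i>n_{i-1}}$, or at the $n_k$-end with the whole bamboo nonnegative, again giving $\mathfrak s_{n_1}(\beta)=1$ and no interior transitions). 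In every case the total is $1$. I would also separately dispose of the degenerate cases: $k=1$ (one node), where $\mathfrak{s}(\beta)=\mathfrak s_{n_1}(\beta)=1$ trivially since $\beta\not<0$ forces $\beta_{n_1}\ge 0$, and $k=2$, which is the two-variable case already in \cite{LN,LSz}.

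\textbf{Main obstacle.} The genuine content is the ``interval'' claim — that the sign vector of $\beta$ along the bamboo cannot have two separated blocks of nonnegative coordinates. This is where the positivity of $I^{-1}$ and the specific shape $\beta = \alpha - x_1\lambda_1\mathfrak v_1 - x_k\lambda_k\mathfrak v_k$ with $x_1,x_k\ge 0$ (from the division) and $\alpha$ itself lying in $\mathbb R_{\ge 0}\langle\mathfrak v_i\rangle$ must be combined. I expect the argument to hinge on the fact that $\langle \mathfrak v_1, \mathfrak v_k\rangle$ spans a plane meeting the positive orthant in a way controlled by the bamboo, so that the linear functional $n_i \mapsto \beta_{n_i}$ restricted to the ordered chain of nodes is, up to sign, ``convex'' or ``concave'' — concretely, $\beta_{n_{i-1}} - \beta_{n_i}$ and $\beta_{n_{i+1}} - \beta_{n_i}$ cannot both be positive once $\beta\not<0$ and $x_1,x_k\ge0$. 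Establishing this crossing property rigorously from \eqref{eq:DETsgr} is the step I would spend the most care on.
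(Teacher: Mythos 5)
Your overall reduction is the same as the paper's: Proposition \ref{prop-1} is indeed proved by first establishing a sign-pattern statement for the exponents $\beta\in\alpha-\mathbb{R}_{\geq0}\langle\mathfrak{v}_1,\mathfrak{v}_k\rangle$ with $\beta\not<0$ (Lemma \ref{lm-1}) and then making the routine count of $\mathfrak{s}(\beta)$. However, the key fact you isolate is both misstated and left unproved, and this is a genuine gap. The misstatement: the set $\{n_i:\beta_{n_i}\geq0\}$ need \emph{not} contain an end of the bamboo. Subtracting multiples of $\mathfrak{v}_1$ drives the coordinates near $n_1$ negative first and subtracting multiples of $\mathfrak{v}_k$ drives those near $n_k$ negative first, so a monomial of $\varphi^+$ can have $\beta_{n_1}<0$ and $\beta_{n_k}<0$ with a nonnegative block strictly in the middle; this is exactly the configuration allowed in Lemma \ref{lm-1} (both $i>1$ and $j<k$ may occur) and corresponds to the interior quadrangles in Lemma \ref{lm-3}. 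Fortunately your multiplicity count only uses that the nonnegative set is a single contiguous block: with the bamboo oriented towards one end, either the block contains the minimal node (contributing $1$ via $\mathfrak{s}_{n_0}$), or exactly one boundary edge of the block, the one on the side of the minimal node, contributes $1$ via a single $\mathfrak{s}_{n>n'}$. So the count survives once the claim is weakened to ``single block''.

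The real gap is that this single-block property is the entire content of the proposition, and the ingredients you propose do not yield it. Positivity of the entries of $I^{-1}$ says nothing about the \emph{order} in which the coordinates change sign; the asserted monotonicity of $(\mathfrak{v}_1)_{n_i}$ and $(\mathfrak{v}_k)_{n_i}$ along the bamboo is unjustified (determinants of subgraphs of a negative definite graph are not monotone under inclusion in general) and is in any case not the statement one needs; and your ``no strict interior local minimum'' criterion does not even imply that the upper level set $\{\beta_{n_i}\geq0\}$ is an interval once ties occur (e.g.\ the pattern $+,-,-,+$ has no strict interior local minimum). What the paper actually proves is a \emph{ratio} monotonicity: the crossing parameters $\sigma_i(\alpha),\tau_i(\alpha)$ of the hyperplanes $\mathcal{E}_i=\{\beta_i=0\}$ with the two rays $\alpha-\mathbb{R}_{\geq0}\mathfrak{v}_1$ and $\alpha-\mathbb{R}_{\geq0}\mathfrak{v}_k$ are ordered (Lemma \ref{lm-2}), which via (\ref{eq:DETsgr}) amounts to the two-by-two determinantal inequality (\ref{eq-a2}); this is proved by Duchon's edge-absorption reduction to a bamboo together with the determinant identity of \cite[Lemma 2.1.2]{LSznew}, not by positivity alone. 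One then needs the ordering of the intersection points on each $\mathcal{E}_i$ (Lemma \ref{lm-21}, using additivity in $\alpha$), the resulting polygon decomposition (Lemma \ref{lm-3}), and the induction along the segment $\beta(t)=t\beta+(1-t)\alpha$ recording which $\mathcal{E}_i$ have been crossed. As written, your proposal assumes this crux rather than proving it, and the route sketched for it would not go through without essentially reconstructing these determinantal and convex-geometric arguments.
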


The main tool in the proof of the proposition will be the following lemma.
\begin{lemma}\label{lm-1}
For any $\beta = \sum_{\ell=1}^{k}\beta_{\ell}E_{n_{\ell}} \in \alpha - \mathbb{R}_{\geq0}\langle \mathfrak{v}_{1}, \mathfrak{v}_{k}\rangle$ with not all $\beta_{\ell}$ negative we have
$$
\beta_{1},\ldots,\beta_{i-1}<0\leq \beta_{i},\ldots,\beta_{j} \geq 0 >\beta_{j+1},\ldots,\beta_{k}
$$  
for some $i,j\in\{1,\ldots,k\}$.
\end{lemma}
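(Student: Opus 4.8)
\textbf{Proof proposal for Lemma \ref{lm-1}.}

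The plan is to exploit the explicit shape of the vectors $\mathfrak{v}_i = \pi_{\mathcal{N}}(E^*_{n_i})$ coming from the fact that $\Gamma^{orb}$ is a bamboo with nodes $n_1,\dots,n_k$ ordered along the bamboo. First I would recall from \eqref{eq:DETsgr} that the coordinate of $\mathfrak{v}_i$ in the direction $E_{n_\ell}$ is $(\mathfrak{v}_i)_\ell = -(E^*_{n_i},E^*_{n_\ell}) = \det_{\Gamma\setminus[n_i,n_\ell]}/\det_\Gamma > 0$, so every $\mathfrak{v}_i$ has strictly positive entries. The key structural input, which I would isolate as the first step, is a monotonicity/unimodality property of these entries along the bamboo: for fixed $i$, the sequence $\ell \mapsto (\mathfrak{v}_i)_\ell$ is non-decreasing for $\ell \le i$ and non-increasing for $\ell \ge i$ — informally, the $i$-th dual vector is ``peaked at $n_i$''. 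This follows because removing $[n_i,n_\ell]$ from the tree $\Gamma$ and comparing with removing $[n_i,n_{\ell'}]$ for $\ell'$ between $i$ and $\ell$ gives a nested pair of subgraphs, and determinants of negative definite subgraphs are monotone under inclusion (each additional vertex of a negative definite graph strictly decreases $\det$ relative to... ) — more carefully, one uses the standard identity $\det_{\Gamma'} = \det_{\Gamma''}\cdot(\text{something})$ along a bamboo segment, or the fact that on a bamboo the entries of the inverse intersection matrix are, up to the fixed denominator $\det_\Gamma$, products of determinants of two complementary pieces, and lengthening the removed segment replaces a factor by a proper subfactor.

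Granting this unimodality, the second step is the actual argument. Write $\beta = \alpha - s\,\mathfrak{v}_1 - t\,\mathfrak{v}_k$ with $s,t \ge 0$. Since $\alpha \in \mathbb{R}_{\ge 0}\langle \mathfrak{v}_i\rangle$, I would first reduce to understanding the sign pattern of a single difference $c\,\mathfrak{v}_1 + d\,\mathfrak{v}_k - \alpha'$-type expression; more efficiently, observe that it suffices to prove the sign pattern $(-,\dots,-,+,\dots,+,-,\dots,-)$ is preserved under subtracting a non-negative multiple of $\mathfrak{v}_1$ and of $\mathfrak{v}_k$ from any vector $\gamma$ with entries that are already unimodal (single peak). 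Indeed $\alpha$ itself, being a non-negative combination of the unimodal vectors $\mathfrak{v}_i$, need not be unimodal, so instead I would argue directly on $\beta$: suppose $\beta_\ell \ge 0$ and $\beta_m \ge 0$ for some $\ell < m$; I claim $\beta_p \ge 0$ for all $\ell \le p \le m$. For this, note $\beta_p = \alpha_p - s(\mathfrak{v}_1)_p - t(\mathfrak{v}_k)_p$, and among the generators $\mathfrak{v}_i$ appearing in $\alpha$ together with $\mathfrak{v}_1,\mathfrak{v}_k$, the ``$p$-th entry vs. endpoint-entries'' comparison shows $(\mathfrak{v}_i)_p \ge \min((\mathfrak{v}_i)_\ell,(\mathfrak{v}_i)_m)$ while for $\mathfrak{v}_1$ and $\mathfrak{v}_k$ the entry at $p$ lies between the entries at $\ell$ and $m$ (here is where unimodality with peak at $1$, resp.\ $k$, is used: on $[\ell,m]\subseteq[1,k]$ the function $(\mathfrak{v}_1)_\cdot$ is non-increasing and $(\mathfrak{v}_k)_\cdot$ is non-decreasing, so $(\mathfrak{v}_1)_p$ and $(\mathfrak{v}_k)_p$ are each sandwiched). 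Combining, $\beta_p$ is bounded below by a non-negative combination of $\beta_\ell$ and $\beta_m$ up to the contributions of the other $\mathfrak{v}_i$, which are harmless because their $p$-entry dominates. Making this ``sandwich'' estimate precise and checking the inequality has the right direction is the technical heart.

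The main obstacle I anticipate is exactly that second step: $\alpha$ is only a non-negative combination of the peaked vectors, not itself peaked, so one cannot blindly invoke unimodality of $\beta$'s entries, and the bookkeeping of which combination of $(\mathfrak{v}_i)_p$, $(\mathfrak{v}_1)_p$, $(\mathfrak{v}_k)_p$ one gets must be done carefully — in particular one must verify the coefficients $s,t$ are the right ones and that the inequality $(\mathfrak{v}_1)_p \le \max((\mathfrak{v}_1)_\ell,(\mathfrak{v}_1)_m)$ etc.\ goes the correct way to conclude $\beta_p\ge 0$ rather than an inequality in the useless direction. An alternative, possibly cleaner route to circumvent this: work on each of the two ``slopes'' separately. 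Restrict attention to indices $\ell \in \{1,\dots,i^*\}$ where $i^*$ is chosen so that $(\mathfrak{v}_1)_\cdot$ is non-increasing everywhere on $\{1,\dots,k\}$ (which it is, since its peak is at $1$) and $(\mathfrak{v}_k)_\cdot$ is non-decreasing everywhere (peak at $k$); then $-s(\mathfrak{v}_1)_\cdot$ is non-decreasing and $-t(\mathfrak{v}_k)_\cdot$ is non-increasing on all of $\{1,\dots,k\}$, so $\beta - \alpha$ is (non-decreasing) $+$ (non-increasing), hence unimodal with a peak, and if additionally $\alpha$ can be shown to be unimodal with peak matching — or if one shows the set $\{\ell : \beta_\ell \ge 0\}$ is an interval by a direct convexity argument using that both $\mathfrak{v}_1,\mathfrak{v}_k$ restricted to the bamboo are, up to scaling, ``concave-like'' sequences — then the conclusion follows. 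I would pursue whichever of these the explicit determinant formulas make quantitative most easily, expecting the determinant recursion along the bamboo to yield the needed log-concavity or monotonicity cleanly.
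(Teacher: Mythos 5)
Your argument hinges on a structural claim that is false. You assert that for fixed $i$ the sequence $\ell\mapsto(\mathfrak{v}_i)_\ell=-(E^*_{n_i},E^*_{n_\ell})=\det\nolimits_{\Gamma\setminus[n_i,n_\ell]}/\det\nolimits_\Gamma$ is ``peaked at $n_i$'' (in particular that $(\mathfrak{v}_1)_\ell$ is non-increasing and $(\mathfrak{v}_k)_\ell$ is non-decreasing in $\ell$), and you justify this by monotonicity of determinants of negative definite graphs under inclusion. Neither statement holds. Counterexample: let $\Gamma$ have two adjacent nodes, $n_1$ decorated $-10$ with two $(-2)$-ends and $n_2$ decorated $-1$ with two $(-4)$-ends; this is negative definite with $\det\nolimits_\Gamma=224$ and $\Gamma^{orb}$ a bamboo, yet $\det\nolimits_{\Gamma\setminus n_1}=32<64=\det\nolimits_{\Gamma\setminus[n_1,n_2]}$, so $(\mathfrak{v}_1)_1=32/224<(\mathfrak{v}_1)_2=64/224$ and $\mathfrak{v}_1$ is not peaked at $n_1$ (the same computation shows determinants are not monotone under inclusion). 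For $k=2$ the lemma is of course trivial, but such local configurations sit inside longer bamboos of nodes, so a proof built on this monotonicity cannot work. What is true, and what the paper actually proves (Lemma \ref{lm-2}, via Duchon's leaf-contraction and the determinantal identity of \cite[Lemma 2.1.2]{LSznew}), is a statement about \emph{ratios}: $(\mathfrak{v}_\ell)_i/(\mathfrak{v}_1)_i$ is non-decreasing in $i$, equivalently the $2\times 2$ minors $\det\nolimits_{\Gamma\setminus[n_1,n_i]}\det\nolimits_{\Gamma\setminus[n_{i+1},n_\ell]}-\det\nolimits_{\Gamma\setminus[n_1,n_{i+1}]}\det\nolimits_{\Gamma\setminus[n_i,n_\ell]}$ have a definite sign; in the example above the ratios are $2$ and $9$, indeed increasing, while the raw entries are not monotone. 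This ratio statement is then fed into a planar convex-geometry argument: the lines $\mathcal{E}_i=\{\beta_i=0\}$ meet the two edges of the affine cone $\alpha-\mathbb{R}_{\geq0}\langle\mathfrak{v}_1,\mathfrak{v}_k\rangle$ in the order $\mathcal{E}_1,\dots,\mathcal{E}_k$, cutting it into quadrangles, and walking from $\alpha$ to $\beta$ one can only cross the next uncrossed line on either side, which is exactly the asserted sign pattern.

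Even granting your unimodality claim, the second step is not yet an argument. To conclude $\beta_p\ge 0$ from $\beta_\ell,\beta_m\ge 0$ along the lines you sketch, you would need nonnegative $\lambda,\mu$ with $\alpha_p\ge\lambda\alpha_\ell+\mu\alpha_m$ while simultaneously $(\mathfrak{v}_1)_p\le\lambda(\mathfrak{v}_1)_\ell+\mu(\mathfrak{v}_1)_m$ and $(\mathfrak{v}_k)_p\le\lambda(\mathfrak{v}_k)_\ell+\mu(\mathfrak{v}_k)_m$; the existence of such $\lambda,\mu$ is precisely a ratio/minor inequality of the type in Lemma \ref{lm-2}, not a consequence of entrywise monotonicity, and your sandwich bound points in the wrong direction without it. So the missing ingredient is exactly the paper's determinantal comparison, and the proposal does not supply a substitute for it.
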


We denote by $\mathcal{E}_{i}=\mathcal{E}_{i}(\alpha)$ the intersection $\{\beta=\sum_{\ell=1}^{k}\beta_{\ell}E_{n_{\ell}} \,|\, \beta_{i}=0\}\cap (\alpha - \mathbb{R}_{\geq0}\langle \mathfrak{v}_{1},\mathfrak{v}_{k}\rangle)$ and we consider the parametric line $\beta(t) = t\beta + (1-t)\alpha$, $t\in \mathbb{R}$ connecting $\alpha$ to $\beta$. Whenever $\beta(t)$ crosses $\mathcal{E}_{i}$  as $t$ goes from $0$ to $1$ the sign of $\beta_{i}(t)$ changes from positive to negative. Thus, the order in which $\beta(t)$ crosses $\mathcal{E}_{i}$ determines the order in which $\beta_{i}(t)$'s change sign, consequently determines the sign configuration of $\beta_{i}=\beta_{i}(1)$, $i=1,\ldots,k$.

\begin{lemma}\label{lm-2}
Let $\sigma_{i}=\sigma_{i}(\alpha)$ and $\tau_{i}=\tau_{i}(\alpha)$ be  such that $\alpha-\sigma_{i}\mathfrak{v}_{1} = (\alpha - \mathbb{R}_{\geq0}\mathfrak{v}_{1}) \cap \mathcal{E}_{i}$ and $\alpha-\tau_{i}\mathfrak{v}_{k} = (\alpha - \mathbb{R}_{\geq0}\mathfrak{v}_{k}) \cap \mathcal{E}_{i}$ for any $i=1,\ldots,k$. 
If $\alpha=a_{\ell}\mathfrak{v}_{\ell}$, $a_{\ell}\geq0$ for some $\ell \in\{1,\dots,k\}$ then we have 
$$
\sigma_{1}(\alpha) < \ldots < \sigma_{\ell}(\alpha) = \ldots =\sigma_{k}(\alpha) 
\qquad and\qquad 
\tau_{1}(\alpha)=\ldots=\tau_{\ell}(\alpha) > \ldots > \tau_{k}(\alpha).
$$
Moreover, for general $\alpha \in \mathbb{R}_{\geq0} \langle \mathfrak{v}_{i}\rangle_{i=\overline{1,k}}$ one has $\sigma_{1}(\alpha)\leq \ldots \leq \sigma_{k}(\alpha)$ and $\tau_{1}(\alpha) \geq \ldots \geq \tau_{k}(\alpha)$.
\end{lemma}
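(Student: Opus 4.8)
The plan is to trade the geometric definition of $\sigma_i$ and $\tau_i$ for an arithmetic one and then reduce the monotonicity statements to a single Cauchy--Schwarz estimate. First, since the point $\alpha-\sigma_i\mathfrak v_1$ is forced to lie on the coordinate hyperplane $\{\beta_i=0\}$, the scalar $\sigma_i$ is determined by one coordinate: $\sigma_i=\alpha_i/(\mathfrak v_1)_i$, and likewise $\tau_i=\alpha_i/(\mathfrak v_k)_i$, where $\alpha_i$ and $(\mathfrak v_j)_i$ denote $E_{n_i}$-coordinates. Writing $\mu(v,w):=-(E^*_v,E^*_w)$, the expansion $E^*_{n_j}=\sum_{v\in\mathcal V}\mu(v,n_j)E_v$ together with \eqref{eq:DETsgr} gives $(\mathfrak v_j)_i=\mu(n_i,n_j)=\det_{\Gamma\setminus[n_i,n_j]}/\det_\Gamma>0$; moreover $[\mu(v,w)]_{v,w}=-I^{-1}$, which is positive definite because $-I$ is. Since $\Gamma^{orb}$ is a bamboo, the nodes $n_1,\dots,n_k$ lie, in this order, along one geodesic of the tree $\Gamma$; in particular $n_m\in[n_p,n_q]$ whenever $p\le m\le q$.

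The algebraic heart of the argument is the multiplicativity of $\mu$ along geodesics: if $w\in[u,v]$, then $\mu(u,v)\,\mu(w,w)=\mu(u,w)\,\mu(w,v)$. I would prove it by cutting $\Gamma$ along the path $[u,v]$: to each path vertex $x$ attach the union $C_x$ of those components of $\Gamma\setminus\{x\}$ meeting no other path vertex, so that $\det_{\Gamma\setminus[u,v]}=\prod_{x\in[u,v]}\det_{C_x}$, while $\det_{\Gamma\setminus[u,w]}$ and $\det_{\Gamma\setminus[w,v]}$ additionally retain the intact subtree of $\Gamma$ on the $v$-side and on the $u$-side of $w$, respectively; expanding $\det_{\Gamma\setminus\{w\}}$ the same way, both sides of the claimed identity become $\det_{C_w}\cdot\det_{\Gamma^{u\text{-side}}_w}\cdot\det_{\Gamma^{v\text{-side}}_w}\cdot\prod_{x\in[u,v]}\det_{C_x}$ after clearing the common power of $\det_\Gamma$. (This is a standard fact about dual classes in a tree and could instead simply be cited.)

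Next I would set $g_m(i):=\mu(n_m,n_i)/\mu(n_1,n_i)$ and $\widetilde g_m(i):=\mu(n_m,n_i)/\mu(n_k,n_i)$ and show that $g_m$ is strictly increasing on $\{1,\dots,m\}$ and constant, equal to $\mu(n_m,n_m)/\mu(n_1,n_m)$, on $\{m,\dots,k\}$, while symmetrically $\widetilde g_m$ is constant on $\{1,\dots,m\}$ and strictly decreasing on $\{m,\dots,k\}$. For $i\ge m$ the identity applied to $w=n_m\in[n_1,n_i]$ gives $g_m(i)=\mu(n_m,n_m)/\mu(n_1,n_m)$, independent of $i$. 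For $i<m$, applying the identity twice --- to $n_i\in[n_1,n_{i+1}]$ and to $n_{i+1}\in[n_i,n_m]$ --- yields $g_m(i+1)/g_m(i)=\mu(n_i,n_i)\mu(n_{i+1},n_{i+1})/\mu(n_i,n_{i+1})^2$, and this ratio is $>1$: it is exactly the Cauchy--Schwarz inequality for the positive definite form $-I^{-1}$ applied to the linearly independent vectors $E^*_{n_i},E^*_{n_{i+1}}$. The statements for $\widetilde g_m$ follow verbatim after exchanging the roles of $n_1$ and $n_k$.

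Finally I substitute back. For $\alpha=a_\ell\mathfrak v_\ell$ with $a_\ell>0$ one has $\alpha_i=a_\ell\mu(n_\ell,n_i)$, hence $\sigma_i=a_\ell\,g_\ell(i)$ and $\tau_i=a_\ell\,\widetilde g_\ell(i)$, which is precisely $\sigma_1<\dots<\sigma_\ell=\dots=\sigma_k$ and $\tau_1=\dots=\tau_\ell>\dots>\tau_k$. For general $\alpha=\sum_m a_m\mathfrak v_m$ with $a_m\ge 0$ one gets $\sigma_i=\sum_m a_m\,g_m(i)$, a non-negative combination of non-decreasing functions, whence $\sigma_1\le\dots\le\sigma_k$; and $\tau_i=\sum_m a_m\,\widetilde g_m(i)$ is a non-negative combination of non-increasing functions, whence $\tau_1\ge\dots\ge\tau_k$. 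I expect the combinatorial bookkeeping in the geodesic-multiplicativity identity, together with recognizing that the remaining inequality is nothing but Cauchy--Schwarz for $-I^{-1}$, to be the only non-routine points; everything else is substitution.
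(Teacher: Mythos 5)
Your proof is correct, but it takes a genuinely different route from the paper's. Both start the same way: you characterize $\sigma_i$ by the vanishing of the $E_{n_i}$-coordinate, getting $\sigma_i(a_\ell\mathfrak{v}_\ell)=a_\ell\,\mu(n_\ell,n_i)/\mu(n_1,n_i)$ with $\mu(v,w)=-(E^*_v,E^*_w)$, and both reduce the general $\alpha$ to the extremal rays by linearity/additivity. The difference is in how the monotonicity of $i\mapsto\mu(n_\ell,n_i)/\mu(n_1,n_i)$ is established. The paper translates it via \eqref{eq:DETsgr} into the determinantal inequalities \eqref{eq-a2}, removes peripheral edges by Duchon's trick to reduce to a bamboo plus isolated vertices, and then concludes with the three-term determinant identity of \cite[Lemma 2.1.2]{LSznew} together with positivity of determinants of negative definite graphs (which also yields the equality for $i\ge\ell$). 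You instead isolate the structural fact that $-I^{-1}$ is a ``path-product'' matrix on a tree, $\mu(u,v)\mu(w,w)=\mu(u,w)\mu(w,v)$ for $w\in[u,v]$ (your cut-along-the-path bookkeeping for this identity is correct: both sides reduce to $\det_{C_w}\det_{\Gamma^{u}_w}\det_{\Gamma^{v}_w}\prod_{x\in[u,v]}\det_{C_x}$ after clearing $\det_\Gamma^2$), which gives the constancy for $i\ge\ell$ immediately, and you obtain the strict increase from the telescoped ratio $\mu(n_i,n_i)\mu(n_{i+1},n_{i+1})/\mu(n_i,n_{i+1})^2>1$, i.e.\ Cauchy--Schwarz for the positive definite form $-I^{-1}$ applied to the independent vectors $E^*_{n_i},E^*_{n_{i+1}}$. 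What your route buys is self-containedness and a conceptual explanation of both the equality and the strictness (no edge-absorption reduction, no appeal to the external determinant identity); what the paper's route buys is that it stays entirely inside the subgraph-determinant calculus already set up for \eqref{eq:DETsgr} and reuses a lemma from \cite{LSznew} that is needed elsewhere. One cosmetic point, shared with the paper's own statement: when $a_\ell=0$ all $\sigma_i$ and $\tau_i$ vanish, so the strict inequalities should implicitly assume $a_\ell>0$; your argument, like the paper's, is otherwise unaffected.
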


\begin{proof}
Note that we have additivity $\tau_{i}(\alpha'+\alpha'') = \tau_{i}(\alpha')+\tau_{i}(\alpha'')$ and $\sigma_{i}(\alpha'+\alpha'') = \sigma_{i}(\alpha')+\sigma_{i}(\alpha'')$, hence we may assume that $\alpha=a_{\ell}\mathfrak{v}_{\ell}$. Moreover,  we will only prove the lemma for $\sigma_{i}$'s. 
The intersection point $\alpha-\sigma_{i}\mathfrak{v}_{1}$ is characterized by $(\alpha-\sigma_{i}\mathfrak{v}_{1},E^{*}_{n_{i}})=0$, whence
$$
\sigma_{i} = \sigma_{i}(a_{\ell}\mathfrak{v}_{\ell}) = \frac{(a_{\ell}\mathfrak{v}_{\ell}, E^{*}_{n_{i}})}{(\mathfrak{v}_{1},E^{*}_{n_{i}})}
=
a_{\ell}\frac{(E^{*}_{n_{\ell}}, E^{*}_{n_{i}})}{(E^{*}_{n_{1}},E^{*}_{n_{i}})}.
$$
Therefore, it is enough to show that
\begin{equation}\label{eq-a1}
\frac{(E^{*}_{n_{\ell}}, E^{*}_{n_{i}})}{(E^{*}_{n_{1}},E^{*}_{n_{i}})} 
<
\frac{(E^{*}_{n_{\ell}}, E^{*}_{n_{i+1}})}{(E^{*}_{n_{1}},E^{*}_{n_{i+1}})}, 
\quad \forall\ i < \ell 
\quad \textnormal{ and } \quad
\frac{(E^{*}_{n_{\ell}}, E^{*}_{n_{i}})}{(E^{*}_{n_{1}},E^{*}_{n_{i}})} 
= 
\frac{(E^{*}_{n_{\ell}}, E^{*}_{n_{i+1}})}{(E^{*}_{n_{1}},E^{*}_{n_{i+1}})}, 
\quad \forall\ i \geq \ell.
\end{equation}
Recall that by (\ref{eq:DETsgr}) $\displaystyle (E^{*}_{v}, E^{*}_{w})=-\frac{\det_{\Gamma\setminus[v,w]}}{\det_{\Gamma}}$ for any vertices $v,w$, hence (\ref{eq-a1}) is equivalent to the following determinantal relations
\begin{equation}\label{eq-a2}
\det\nolimits_{\Gamma\setminus [n_{1},n_{i}]}\det\nolimits_{\Gamma\setminus [n_{i+1},n_{\ell}]}-\det\nolimits_{\Gamma\setminus [n_{1},n_{i+1}]} \cdot \det\nolimits_{\Gamma\setminus [n_{i},n_{\ell}]} 
>0, \quad \forall\ i<\ell,
\end{equation}
and equality for $ i\geq \ell$.

We use the technique of N. Duchon (cf. \cite[Section 21]{EN}) to reduce (\ref{eq-a2})  to the case when $\Gamma$ is a bamboo. To do so, we can remove peripheral edges of a graph in order to simplify graph determinant computations. Removal of such an edge is compensated by adjusting the decorations of the graph. Let $v$ be a vertex with decoration $b_{v}$ and which is connected by an edge only to a vertex $w$ with decoration $b_{w}$. If we remove this edge and replace the decoration of the vertex $w$ by $ b_{w}-b_{v}^{-1}$ then the resulting non-connected graph will be also negative definite and its determinant does not change. Using this technique we remove  consecutively every edges on the legs of $\Gamma$, and denote the resulting decorated graph by $\Gamma'$ which consists of a bamboo -- connecting the nodes $n_{1}$ and $n_{k}$ -- and isolated vertices. Note that $\det\nolimits_{\Gamma\setminus [n_{i},n_{j}]} = \det_{\Gamma'\setminus [n_{i},n_{j}]}$ for all $i,j=1,\ldots,k$.  Moreover, (\ref{eq-a2}) is equivalent with
\begin{equation}\label{eq-a3}
\det\nolimits_{\Gamma'\setminus [n_{1},n_{i}]}\det\nolimits_{\Gamma'\setminus [n_{i+1},n_{\ell}]} - \det\nolimits_{\Gamma'\setminus [n_{1},n_{i+1}]} \cdot \det\nolimits_{\Gamma'\setminus [n_{i},n_{\ell}]}  >0, \quad \forall\ i<\ell,
\end{equation}
and equality for $i\geq \ell$, respectively. From  point of view of (\ref{eq-a3}) we can forget about the isolated vertices of $\Gamma'$, ie. we may assume that $\Gamma'$ is a bamboo. If we denote by $\det'_{[n_{i},n_{j}]}$ the determinant of the graph $[n_{i},n_{j}]$ as subgraph of (the bamboo) $\Gamma'$ then for $i<\ell$ we have
\begin{multline*}
\det\nolimits_{\Gamma'\setminus [n_{1},n_{i}]}\det\nolimits_{\Gamma'\setminus [n_{i+1},n_{\ell}]} 
- 
\det\nolimits_{\Gamma'\setminus [n_{1},n_{i+1}]} \cdot \det\nolimits_{\Gamma'\setminus [n_{i},n_{\ell}]} 
=
 \\
 \det'\nolimits_{[n_{1},n_{i+1})} \cdot \det'\nolimits_{(n_{i},n_{k}]}\cdot \det'\nolimits_{(n_{\ell}, n_{k}]}  
 -  
 \det'\nolimits_{[n_{1},n_{i})} \cdot \det'\nolimits_{(n_{i+1},n_{k}]}\cdot \det'\nolimits_{(n_{\ell}, n_{k}]}
 \\
 =
 \det'\nolimits_{[n_{1},n_{k}]} \cdot \det'\nolimits_{(n_{i},n_{i+1})}\cdot \det'\nolimits_{(n_{\ell}, n_{k}]},
\end{multline*}
where the second equality uses the identity $$\det'\nolimits_{[n_{1},n_{i+1})}\cdot\det'\nolimits_{(n_{i},n_{k}]}=\det'\nolimits_{[n_{1},n_{k}]}\cdot\det'\nolimits_{(n_{i},n_{i+1})}+\det'\nolimits_{[n_{1},n_{i})}\cdot\det'\nolimits_{(n_{i+1},n_{k}]}$$ from \cite[Lemma 2.1.2]{LSznew}. $\Gamma'$ is also negative definite, hence $ \det'\nolimits_{[n_{1},n_{k}]} \cdot \det'\nolimits_{(n_{i},n_{i+1})}\cdot \det'\nolimits_{(n_{\ell}, n_{k}]} >0$ (note that $\det'\nolimits_{(n_{k},n_{k}] }=1$). 
If $i\geq \ell$ then it is easy to see 
\begin{multline*}
\det\nolimits_{\Gamma'\setminus [n_{1},n_{i}]}\det\nolimits_{\Gamma'\setminus [n_{i+1},n_{\ell}]} 
- 
\det\nolimits_{\Gamma'\setminus [n_{1},n_{i+1}]} \cdot \det\nolimits_{\Gamma'\setminus [n_{i},n_{\ell}]} 
=
\\
 \det'\nolimits_{(n_{i},n_{k}]} \cdot \det'\nolimits_{[n_{1},n_{\ell})}\cdot \det'\nolimits_{(n_{i+1}, n_{k}]}  
 -  
 \det'\nolimits_{(n_{i+1},n_{k}]} \cdot \det'\nolimits_{[n_{1},n_{\ell})}\cdot \det'\nolimits_{(n_{i}, n_{k}]} = 0.
\end{multline*}
\end{proof}
We also introduce additional notations $\mathcal{E}_{0} = \mathcal{E}_{0}(\alpha)=\alpha- \mathbb{R}_{\geq0}\mathfrak{v}_{k}$ and $\mathcal{E}_{k+1}=\mathcal{E}_{k+1}(\alpha) =\alpha-\mathbb{R}_{\geq0}\mathfrak{v}_{1}$. Moreover, denote by $\varepsilon_{i,j}=\varepsilon_{i,j}(\alpha) = \mathcal{E}_{i}(\alpha)\cap \mathcal{E}_{j}(\alpha)$ the intersection points of segments $\mathcal{E}_{i}$ and $\mathcal{E}_{j}$.

\begin{lemma}\label{lm-21}
On $\mathcal{E}_{i}(\alpha)$ the intersection points are in the following order: $\varepsilon_{i,0}(\alpha) $, $\ldots$, $\varepsilon_{i,i-1}(\alpha)$, $\varepsilon_{i,i+1}(\alpha)$, $\ldots$,  $\varepsilon_{i,k+1}(\alpha)$ for all $i=0,\ldots,k+1$ and for all $\alpha \in \mathbb{R}_{\geq0}\langle \mathfrak{v}_{i}\rangle_{i=\overline{1,k}}$.
\end{lemma}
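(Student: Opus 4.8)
The plan is to fix $\alpha\in\mathbb{R}_{\geq0}\langle\mathfrak{v}_i\rangle$ and describe the configuration of the segments $\mathcal{E}_i(\alpha)$, $i=0,\dots,k+1$, inside the triangle $\alpha-\mathbb{R}_{\geq0}\langle\mathfrak{v}_1,\mathfrak{v}_k\rangle$. Each $\mathcal{E}_i$ (for $1\le i\le k$) is the chord cut out by the hyperplane $\{\beta_{n_i}=0\}$; the two extreme ``segments'' $\mathcal{E}_0=\alpha-\mathbb{R}_{\geq0}\mathfrak{v}_k$ and $\mathcal{E}_{k+1}=\alpha-\mathbb{R}_{\geq0}\mathfrak{v}_1$ are the two edges of the triangle emanating from $\alpha$. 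Since all the $\mathcal{E}_i$ pass through $\alpha$? — no: only $\mathcal{E}_0$ and $\mathcal{E}_{k+1}$ pass through $\alpha$; the interior chords $\mathcal{E}_i$ meet the two edges at the points $\alpha-\sigma_i\mathfrak{v}_1\in\mathcal{E}_{k+1}$ and $\alpha-\tau_i\mathfrak{v}_k\in\mathcal{E}_0$ studied in Lemma \ref{lm-2}. So the first step is to record that the endpoints of $\mathcal{E}_i$ on the edge $\mathcal{E}_{k+1}$ are ordered by $\sigma_1\le\cdots\le\sigma_k$ and on the edge $\mathcal{E}_0$ by $\tau_1\ge\cdots\ge\tau_k$, both by Lemma \ref{lm-2}.

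Next I would argue by a continuity/additivity reduction exactly as in the proof of Lemma \ref{lm-2}: the intersection points $\varepsilon_{i,j}(\alpha)$ depend affine-linearly (in an appropriate sense, after clearing denominators) on $\alpha$, and more importantly the cyclic order of points along a fixed line is an open condition, so it suffices to verify the claimed order for $\alpha$ of the special form $a_\ell\mathfrak{v}_\ell$, and then conclude the general (weak) ordering by taking limits/sums of such. For such special $\alpha$ the chords $\mathcal{E}_1,\dots,\mathcal{E}_\ell$ are genuinely distinct and fan out, while $\mathcal{E}_\ell,\dots,\mathcal{E}_k$ coincide (this is the ``$=$'' in Lemma \ref{lm-2}); this degeneracy is harmless because the statement only asserts an order, not strict separation.

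The geometric heart of the argument is then this: the chords $\mathcal{E}_0,\mathcal{E}_1,\dots,\mathcal{E}_{k+1}$ form a \emph{pencil-like} family in the plane spanned by $\mathfrak{v}_1,\mathfrak{v}_k$ — consecutive chords $\mathcal{E}_i,\mathcal{E}_{i+1}$ do not cross \emph{inside} the triangle for $i\ge\ell$ (they are parallel/coincident there) and cross in a nested fashion for $i<\ell$. Concretely, I would show that on a fixed chord $\mathcal{E}_i$ one can read off where $\mathcal{E}_j$ crosses it by comparing, via \eqref{eq:DETsgr}, the ratios that gave the $\sigma$'s and $\tau$'s: walking along $\mathcal{E}_i$ from its endpoint $\varepsilon_{i,k+1}$ on the edge $\mathcal{E}_{k+1}$ towards its endpoint $\varepsilon_{i,0}$ on the edge $\mathcal{E}_0$, one meets first the chords $\mathcal{E}_j$ with $j<i$ (in increasing order of $j$, because their $\mathcal{E}_{k+1}$-endpoints $\alpha-\sigma_j\mathfrak{v}_1$ are ordered by $\sigma_j$ and, by the equality part of \eqref{eq:DETsgr}-type relations, the chords with $j>i$ are ``parallel to $\mathcal{E}_i$ on the far side''), and then the chords $\mathcal{E}_j$ with $j>i$ (in increasing order of $j$ again, this time sorted by their $\mathcal{E}_0$-endpoints $\alpha-\tau_j\mathfrak{v}_k$). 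That yields precisely the order $\varepsilon_{i,0},\varepsilon_{i,1},\dots,\varepsilon_{i,i-1},\varepsilon_{i,i+1},\dots,\varepsilon_{i,k+1}$ once one notices that $\varepsilon_{i,0}$ and $\varepsilon_{i,k+1}$ are the two endpoints of $\mathcal{E}_i$ and the interior ones are ordered as just described (with $j<i$ giving one contiguous block near the $\mathfrak{v}_1$-endpoint and $j>i$ the complementary block). For the two boundary indices $i=0$ and $i=k+1$ the statement degenerates to the orderings of $\{\sigma_j\}$ and $\{\tau_j\}$ along the two edges, which is literally Lemma \ref{lm-2}.

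I expect the main obstacle to be bookkeeping the reduction to the bamboo case cleanly: one wants a single determinantal identity (the same three-term relation from \cite[Lemma 2.1.2]{LSznew} already used in Lemma \ref{lm-2}) that simultaneously controls, for each pair $(i,j)$, on which side of $\mathcal{E}_i$ the intersection with $\mathcal{E}_j$ falls, and checking that the sign of that determinant is governed only by $\mathrm{sgn}(i-j)$ relative to $\ell$ — i.e.\ proving the crossing pattern is ``nested and then parallel''. Once that sign computation is in place (it reduces, as in Lemma \ref{lm-2}, to positivity of products of determinants of connected negative-definite subbamboos), the ordering of the $\varepsilon_{i,j}$ along $\mathcal{E}_i$ follows by the same monotonicity of the ratios $\det_{\Gamma\setminus[n_1,n_j]}/\det_{\Gamma\setminus[n_i,n_j]}$ that produced the inequalities $\sigma_1\le\cdots\le\sigma_k$, $\tau_1\ge\cdots\ge\tau_k$. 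The passage from strict to weak inequalities for general $\alpha$ (Lemma \ref{lm-2}'s ``moreover'') is then automatic by additivity.
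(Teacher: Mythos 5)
Your overall skeleton --- reduce by additivity to $\alpha=a_{\ell}\mathfrak{v}_{\ell}$ and read off the order from the degenerate configuration supplied by Lemma \ref{lm-2} --- is indeed the paper's route, but the two steps that would make it a proof are missing or wrong as written. First, the reduction: ``the cyclic order of points along a line is an open condition, so it suffices to check $\alpha=a_{\ell}\mathfrak{v}_{\ell}$ and take limits/sums'' is not an argument. A general $\alpha$ in the cone is neither a perturbation nor a limit of multiples of a single $\mathfrak{v}_{\ell}$, and additivity of the points $\varepsilon_{i,j}(\alpha)$ by itself does not yet transport an ordering. The mechanism the paper uses is the normalized parametrization: writing $\varepsilon_{i,j}=(1-t_{i,j})\varepsilon_{i,0}+t_{i,j}\varepsilon_{i,k+1}$, additivity of $\varepsilon_{i,j}$ and of $\sigma_i$ yields the convex-combination formula \eqref{eq-a4}, whose weights $\sigma_i(\alpha'),\sigma_i(\alpha'')$ are \emph{independent of $j$}; only because of this $j$-independence does the monotonicity of $j\mapsto t_{i,j}$ pass from the extremal rays to arbitrary nonnegative combinations. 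This is exactly the point your sketch leaves out.

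Second, your ``geometric heart'' both misplaces the order and would not prove it. Walking along $\mathcal{E}_i$ from $\varepsilon_{i,k+1}$ toward $\varepsilon_{i,0}$ one must meet the chords $\mathcal{E}_j$ with $j>i$ first and those with $j<i$ last; equivalently, the $j<i$ block adjoins $\varepsilon_{i,0}$, not the $\mathfrak{v}_1$-endpoint as you assert (in the special case $\alpha=a_{\ell}\mathfrak{v}_{\ell}$ with $i\leq\ell$ all $\varepsilon_{i,j}$, $j<i$, actually coincide with $\varepsilon_{i,0}$; also the chords $\mathcal{E}_j$, $j\geq\ell$, are concurrent at $\alpha-\sigma_{\ell}\mathfrak{v}_1$, not ``parallel''). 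More seriously, for general $\alpha$ the crossing order along $\mathcal{E}_i$ is \emph{not} a formal consequence of the endpoint monotonicities $\sigma_1\leq\cdots\leq\sigma_k$, $\tau_1\geq\cdots\geq\tau_k$ of Lemma \ref{lm-2}: e.g.\ $\sigma=(0.99,1,2)$, $\tau=(2,1,0.99)$, $i=2$ satisfies them, yet puts the crossing with $\mathcal{E}_1$ at parameter $\approx 0.98$ and with $\mathcal{E}_3$ at $\approx 0.02$, the opposite of the claimed order. So the pairwise ``sign computation'' you defer to determinantal identities is genuinely new input, not ``the same monotonicity of ratios'' as in Lemma \ref{lm-2}, and it is never carried out. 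The paper avoids it altogether: on an extremal ray the equalities in Lemma \ref{lm-2} force the chords into two pencils through $\alpha-\tau_{\ell}\mathfrak{v}_k$ and $\alpha-\sigma_{\ell}\mathfrak{v}_1$, where the order along $\mathcal{E}_i$ is immediate, and \eqref{eq-a4} then transports it to every $\alpha$.
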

\begin{proof}
For $i=0$ and $i=k+1$ the statement is immediate from Lemma \ref{lm-2}. Notice that we have defined $\sigma_{i}=\sigma_{i}(\alpha)$ and $\tau_{i}=\tau_{i}(\alpha)$ such that $\varepsilon_{i,0} = \alpha-\tau_{i}\mathfrak{v}_{k}$ and $\varepsilon_{i,k+1} = \alpha-\sigma_{i} \mathfrak{v}_{1}$. 
If $t_{i,j}=t_{i,j}(\alpha)\in [0,1]$ such that $\varepsilon_{i,j} = (1-t_{i,j})\varepsilon_{i,0} + t_{i,j}\varepsilon_{i,k+1}$, then we have to prove that $t_{i,j}(\alpha) \leq t_{i,j+1}(\alpha)$ for all $j$.  Indeed, the case $\alpha=a_{\ell} \mathfrak{v}_{\ell}$, $a_{\ell}\geq0$ follows directly from the first part of Lemma \ref{lm-2}. Generally, notice first the additivity $\varepsilon_{i,j}(\alpha'+\alpha'') = \varepsilon_{i,j}(\alpha')+\varepsilon_{i,j}(\alpha'')$ (as vectors), hence 
\begin{equation}\label{eq-a4}
t_{i,j}(\alpha'+\alpha'') = \frac{t_{i,j}(\alpha') {\sigma}_{i}(\alpha')+t_{i,j}(\alpha''){\sigma}_{i}(\alpha'')}{{\sigma}_{i}(\alpha')+{\sigma}_{i}(\alpha'')}, 
\end{equation}
which gives the result using $t_{i,j}(a_{\ell}\mathfrak{v}_{\ell}) \leq t_{i,j+1}(a_{\ell}\mathfrak{v}_{\ell}) $ for all $j$ and $\ell$.
%
\end{proof}

\begin{lemma}\label{lm-3}
The bounded region $(\alpha - \mathbb{R}_{\geq0}\langle\mathfrak{v}_{1},\mathfrak{v}_{k}\rangle) \setminus \mathbb{R}_{<0}\langle E_{n}\rangle _{n\in \mathcal{N}}$ is the union of quadrangles between segments $\mathcal{E}_{i}, \mathcal{E}_{i+1}, \mathcal{E}_{j}, \mathcal{E}_{j+1}$
or triangles (degenerated cases). 
These polygons may intersect each other only at the boundary. 
\end{lemma}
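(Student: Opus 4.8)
The plan is to describe the region $(\alpha - \mathbb{R}_{\geq 0}\langle \mathfrak{v}_1, \mathfrak{v}_k\rangle) \setminus \mathbb{R}_{<0}\langle E_n\rangle_{n\in\mathcal{N}}$ as an explicit overlay of the segments $\mathcal{E}_0,\dots,\mathcal{E}_{k+1}$ and then read off the cell structure. Recall that $\mathcal{E}_i(\alpha)$ is the intersection of the coordinate hyperplane $\{\beta_i = 0\}$ with the closed triangle/quadrant $T := \alpha - \mathbb{R}_{\geq 0}\langle \mathfrak{v}_1,\mathfrak{v}_k\rangle$, while $\mathcal{E}_0$ and $\mathcal{E}_{k+1}$ are two of the edges of $T$ itself. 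First I would observe that $\mathbb{R}_{<0}\langle E_n\rangle_{n\in\mathcal{N}}$ is exactly the open orthant where all coordinates $\beta_1,\dots,\beta_k$ are negative, so its complement inside $T$ is $\{\beta\in T : \beta_i \geq 0 \text{ for at least one } i\}$, which is precisely $\bigcup_{i=1}^{k} (T \cap \{\beta_i \geq 0\})$. Each half-space slice $T\cap\{\beta_i\geq 0\}$ is a sub-polygon of $T$ cut off by the segment $\mathcal{E}_i$; boundedness of the union follows since by Lemma \ref{lm-1} (the sign-configuration lemma) the only $\beta$ with some $\beta_i \geq 0$ lie in the portion of $T$ before one simultaneously "exits" past all the $\mathcal{E}_i$, and this portion is a finite polygon bounded by pieces of the $\mathcal{E}_i$'s.

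The heart of the argument is to show that the arrangement of the $k+2$ segments $\mathcal{E}_0,\dots,\mathcal{E}_{k+1}$ inside the planar region $T$ is \emph{non-crossing in a controlled way}: concretely, that the segments $\mathcal{E}_1,\dots,\mathcal{E}_k$ can be linearly ordered so that consecutive ones bound a quadrangle (or a degenerate triangle) and the whole bounded region is tiled by these quadrangles. This is where Lemmas \ref{lm-2} and \ref{lm-21} do the work. Lemma \ref{lm-2} gives $\sigma_1(\alpha) \leq \cdots \leq \sigma_k(\alpha)$ and $\tau_1(\alpha)\geq\cdots\geq\tau_k(\alpha)$, which says that along the edge $\mathcal{E}_{k+1} = \alpha - \mathbb{R}_{\geq 0}\mathfrak{v}_1$ the endpoints $\varepsilon_{i,k+1}$ of the $\mathcal{E}_i$ appear in the order $i = 1, 2, \dots, k$, and along $\mathcal{E}_0 = \alpha - \mathbb{R}_{\geq 0}\mathfrak{v}_k$ they appear in the \emph{reverse} order $i = k, \dots, 2, 1$. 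Lemma \ref{lm-21} then controls the interior crossings: on each $\mathcal{E}_i$ the intersection points with the other segments occur in the monotone order $\varepsilon_{i,0}, \varepsilon_{i,1}, \dots, \widehat{\varepsilon_{i,i}}, \dots, \varepsilon_{i,k+1}$. I would use these two facts to argue that the planar graph formed by the segments $\mathcal{E}_i$ (restricted to $T$) is exactly the standard "grid-like" arrangement of two interleaved families of parallel-ish chords of a triangle, whose bounded faces are quadrilaterals with a quadrilateral's four sides lying on $\mathcal{E}_i, \mathcal{E}_{i+1}, \mathcal{E}_j, \mathcal{E}_{j+1}$ for appropriate index pairs; the triangles arise exactly when two of the four bounding segments meet at a common intersection point $\varepsilon_{i,j}$, collapsing one side.

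I would then assemble the faces: a point $\beta \in T$ with some $\beta_i\geq 0$ has a well-defined "sign pattern" which, by Lemma \ref{lm-1}, is of the staircase form $(-,\dots,-,+,\dots,+,-,\dots,-)$ determined by an index pair $(i,j)$; the locus with a fixed pattern is the intersection of $T$ with a box $\{\beta_i \geq 0\} \cap \{\beta_{i-1} < 0\} \cap \{\beta_j \geq 0\} \cap \{\beta_{j+1} < 0\}$, hence a cell cut out by (at most) the four segments $\mathcal{E}_{i-1},\mathcal{E}_i,\mathcal{E}_j,\mathcal{E}_{j+1}$, i.e.\ a quadrangle (or triangle in the degenerate alignment). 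These cells tile the bounded region and, being the closures of disjoint open sign-pattern loci, meet only along their boundaries, which is the final assertion. \textbf{The main obstacle} I anticipate is the bookkeeping needed to make "grid-like arrangement" precise — namely checking that the monotonicity statements of Lemmas \ref{lm-2} and \ref{lm-21} really force every bounded face to be bounded by \emph{four consecutive} segments in the two orderings (rather than producing faces with more sides), and handling the degenerate configurations where several $\varepsilon_{i,j}$ coincide so that a quadrangle collapses to a triangle; this is essentially a careful case analysis of the planar arrangement rather than a deep new idea.
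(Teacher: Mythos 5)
Your proposal has a genuine structural gap: it leans on Lemma \ref{lm-1} in two places (for boundedness and, more importantly, for the classification of sign patterns used to assemble the faces), but in the paper Lemma \ref{lm-1} is proved \emph{after} and \emph{by means of} Lemma \ref{lm-3}: its proof tracks the segment $\beta(t)=t\beta+(1-t)\alpha$ through the quadrangles/triangles supplied by Lemma \ref{lm-3}. So deducing the staircase sign configuration from Lemma \ref{lm-1} and then cutting out each face by the four conditions $\beta_{i-1}<0\le\beta_i$ and $\beta_j\ge 0>\beta_{j+1}$ is circular, unless you give an independent proof of the staircase statement, which you do not. (The boundedness claim needs no such input anyway: each $\mathfrak{v}_i=\pi_{\mathcal{N}}(E^*_{n_i})$ has strictly positive coordinates because $-(E^*_v,E^*_w)>0$, so far enough inside $\alpha-\mathbb{R}_{\geq0}\langle\mathfrak{v}_1,\mathfrak{v}_k\rangle$ all coordinates are negative.)

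Your non-circular thread --- using Lemmas \ref{lm-2} and \ref{lm-21} to view $\mathcal{E}_0,\dots,\mathcal{E}_{k+1}$ as a grid-like arrangement --- is exactly the paper's route, but you stop precisely where the work is: you flag as ``the main obstacle'' the verification that every bounded face is bounded by four consecutive segments, and that verification is the entire content of the paper's (short) proof. Namely: the segments $\mathcal{E}_i$ cut the region into convex faces; at a vertex $\varepsilon_{i,j}$ of such a face, the ordering of the points $\varepsilon_{i,0},\dots,\varepsilon_{i,k+1}$ along each $\mathcal{E}_i$ given by Lemma \ref{lm-21} forces the two sides of the face through $\varepsilon_{i,j}$ to be the consecutive sub-segments $[\varepsilon_{i,j},\varepsilon_{i,j+1}]\subset\mathcal{E}_i$ and $[\varepsilon_{i+1,j},\varepsilon_{i,j}]\subset\mathcal{E}_j$; consequently $\varepsilon_{i,j+1}$ and $\varepsilon_{i+1,j}$ are vertices of the same face and its remaining sides lie on $\mathcal{E}_{j+1}$ and $\mathcal{E}_{i+1}$, so the face is the quadrangle with vertices $\varepsilon_{i,j},\varepsilon_{i+1,j},\varepsilon_{i+1,j+1},\varepsilon_{i,j+1}$, degenerating to a triangle when some of these coincide. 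Until you either carry out this step or give a proof of the sign-pattern statement that does not pass through Lemma \ref{lm-3}, your argument does not yet prove the lemma.
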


\begin{center}
\begin{tikzpicture}
\def\mydotsize{0.04};
\def\mytextsize{\tiny};
\def\x{-8}
\def\y{1.77}
\coordinate (E0) at ({\x},{\y});
\node[left] at (E0) {\mytextsize $\mathcal{E}_{0}$};
\coordinate (Ek+1) at ({\x},{-\y});
\node[left] at (Ek+1) {\mytextsize $\mathcal{E}_{k+1}$};
\coordinate (e0k+1) at (0,0);
\draw[fill] (e0k+1) circle (\mydotsize);
\node[right] at (e0k+1) {\mytextsize $\varepsilon_{0,k+1}=\alpha$};
\def\sc01{0.91}
\coordinate (e01) at ({\x*\sc01},{\y*\sc01});
\draw[fill] (e01) circle (\mydotsize);
\node[above right] at (e01) {\mytextsize $\varepsilon_{0,1}$};
\coordinate (ekk+1) at ({\x*\sc01},{-\y*\sc01});
\draw[fill] (ekk+1) circle (\mydotsize);
\node[below right] at (ekk+1) {\mytextsize $\varepsilon_{k,k+1}$};
\def\sc01{0.77}
\coordinate (e02) at ({\x*\sc01},{\y*\sc01});
\draw[fill] (e02) circle (\mydotsize);
\node[above right] at (e02) {\mytextsize $\varepsilon_{0,2}$};
\coordinate (ek-1k+1) at ({\x*\sc01},{-\y*\sc01});
\draw[fill] (ek-1k+1) circle (\mydotsize);
\node[below right] at (ek-1k+1) {\mytextsize $\varepsilon_{k-1,k+1}$};
\def\sc01{0.23}
\coordinate (e0k) at ({\x*\sc01},{\y*\sc01});
\draw[fill] (e0k) circle (\mydotsize);
\node[above right] at (e0k) {\mytextsize $\varepsilon_{0,k}$};
\coordinate (e1k+1) at ({\x*\sc01},{-\y*\sc01});
\draw[fill] (e1k+1) circle (\mydotsize);
\node[below right] at (e1k+1) {\mytextsize $\varepsilon_{1,k+1}$};
\def\sc01{0.42}
\coordinate (e0k-1) at ({\x*\sc01},{\y*\sc01});
\draw[fill] (e0k-1) circle (\mydotsize);
\node[above right] at (e0k-1) {\mytextsize $\varepsilon_{0,k-1}$};
\coordinate (e2k+1) at ({\x*\sc01},{-\y*\sc01});
\draw[fill] (e2k+1) circle (\mydotsize);
\node[below right] at (e2k+1) {\mytextsize $\varepsilon_{2,k+1}$};
\draw[very thin] (e0k+1) edge (E0);
\draw[very thin] (e0k+1) edge (Ek+1);
\draw[very thin] (e0k) edge (ekk+1);
\draw[very thin] (e0k-1) edge (ek-1k+1);
\draw[very thin] (e02) edge (e2k+1);
\draw[very thin] (e01) edge (e1k+1);
\coordinate (e1k) at (intersection of {e0k--ekk+1} and {e01--e1k+1});
\draw[fill] (e1k) circle (\mydotsize);
\node[below] at (e1k) {\mytextsize $\varepsilon_{1,k}$};
\coordinate (e1k-1) at (intersection of {e01--e1k+1} and {e0k-1--ek-1k+1});
\draw[fill] (e1k-1) circle (\mydotsize);
\node[right] at (e1k-1) {\mytextsize $\varepsilon_{1,k-1}$};
\coordinate (e12) at (intersection of {e01--e1k+1} and {e02--e2k+1});
\draw[fill] (e12) circle (\mydotsize);
\node[below left] at (e12) {\mytextsize $\varepsilon_{1,2}$};
\coordinate (e2k) at (intersection of {e0k--ekk+1} and {e02--e2k+1});
\draw[fill] (e2k) circle (\mydotsize);
\node[below] at (e2k) {\mytextsize $\varepsilon_{2,k}$};
\coordinate (ek-1k) at (intersection of {e0k--ekk+1} and {ek-1k+1--e0k-1});
\draw[fill] (ek-1k) circle (\mydotsize);
\node[above left] at (ek-1k) {\mytextsize $\varepsilon_{k-1,k}$};
\coordinate (e2k-1) at (intersection of {e02--e2k+1} and {e0k-1--ek-1k+1});
\draw[fill] (e2k-1) circle (\mydotsize);
\node[left] at (e2k-1) {\mytextsize $\varepsilon_{2,k-1}$};
\coordinate (d01) at ($0.565*(E0)+(0.15,0.25)$);
\coordinate (d02) at ($0.535*(E0)+(0.15,0.25)$);
\draw[dotted, thick] (d01)--(d02);
\coordinate (d01) at ($0.565*(Ek+1)+(0.15,-0.25)$);
\coordinate (d02) at ($0.535*(Ek+1)+(0.15,-0.25)$);
\draw[dotted, thick] (d01)--(d02);
\end{tikzpicture}
\end{center}

\begin{proof}
The segments $\mathcal{E}_{i}$ divide $(\alpha - \mathbb{R}_{\geq0} \langle \mathfrak{v}_{1}, \mathfrak{v}_{k}\rangle) \setminus \mathbb{R}_{<0}\langle E_{n}\rangle _{n\in \mathcal{N}} $ into convex polygons. By Lemma \ref{lm-21}, we can assume that $[\varepsilon_{i,j}, \varepsilon_{i,j+1}]$ and $[\varepsilon_{i+1,j},\varepsilon_{i,j}]$ are two faces at vertex $\varepsilon_{i,j}$ of such a polygon. Moreover, $\varepsilon_{i+1,j}$ and $\varepsilon_{i,j+1}$ must be also vertices of the polygon and another two faces must lie on segments $\mathcal{E}_{i+1}$ and $\mathcal{E}_{j+1}$. Hence, the segments $\mathcal{E}_{i}$, $\mathcal{E}_{j}$, $\mathcal{E}_{i+1}$, $\mathcal{E}_{j+1}$ form a convex polygon with vertices $\varepsilon_{i,j}$, $\varepsilon_{i+1,j}$, $\varepsilon_{i,j+1}$ and $\varepsilon_{i+1,j+1}$. The polygon can degenerate into triangles with vertices $\varepsilon_{i,j}$, $\varepsilon_{i,j+1}$ and $\varepsilon_{i+1,j+1}$.
\end{proof}

\begin{proof}[Proof of Lemma \ref{lm-1}]
Let $\beta \in (\alpha - \mathbb{R}_{\geq0} \langle \mathfrak{v}_{1}, \mathfrak{v}_{k}\rangle) \setminus \mathbb{R}_{<0}\langle E_{n}\rangle _{n\in \mathcal{N}}$ be fixed. Consider the parametric line $\beta(t) = t\beta+(1-t)\alpha$ connecting $\beta$ to the vertex $\alpha$ of the affine cone. 
The order in which $\beta(t)$ intersects the segments $\mathcal{E}_{i}$ as $t$ goes from $0$ to $1$  tells us the order in which $\beta_{i}$'s are changing signs. 

In the beginning, every $\beta_{i}>0$ and $\beta(t)$ sits in the polygon with vertices $\alpha=\varepsilon_{0,k+1}$, $\varepsilon_{0,k}$, $\varepsilon_{1,k}$,  $\varepsilon_{k+1,1}$, with sides lying on $\mathcal{E}_{0}$, $\mathcal{E}_{k+1}$, $\mathcal{E}_{1}$, $\mathcal{E}_{k}$. We also say that we have already intersected $\mathcal{E}_{0}$ and $\mathcal{E}_{k+1}$. 
Then $\beta(t)$ either intersects $\mathcal{E}_{1}$, hence $\beta_{1}$ changes to $\beta_{1}<0$ and $\beta(t)$ arrives into the polygon $\varepsilon_{1,k+1}, \varepsilon_{1,k}, \varepsilon_{2,k}, \varepsilon_{2,k+1}$ with sides on $\mathcal{E}_{1}$, $\mathcal{E}_{2}$, $\mathcal{E}_{k}$, $\mathcal{E}_{k+1}$, or, it intersects $\mathcal{E}_{k}$ implying that $\beta_{k}$ becomes negative and $\beta(t)$ arrives into the polygon with sides on $\mathcal{E}_{0}$, $\mathcal{E}_{1}$, $\mathcal{E}_{k-1}$, $\mathcal{E}_{k}$. Therefore, we have crossed $\mathcal{E}_{0}, \mathcal{E}_{1}, \mathcal{E}_{k+1}$ in the first, while $\mathcal{E}_{0}, \mathcal{E}_{k}, \mathcal{E}_{k+1}$ in the second case. 

By induction, we assume that $\beta(t)$ lies in the polygon with sides $\mathcal{E}_{i}, \mathcal{E}_{i+1}, \mathcal{E}_{j}, \mathcal{E}_{j+1}$ for some $t$ and it has already crossed $\mathcal{E}_{0}, \ldots, \mathcal{E}_{i}, \mathcal{E}_{j+1},\ldots, \mathcal{E}_{k+1}$, that is $\beta_{1},\ldots,\beta_{i}, \beta_{j+1},\ldots,\beta_{k}<0$ and $\beta_{i+1},\ldots, \beta_{j}\geq 0$. Thus, $\beta(t)$ must intersects $\mathcal{E}_{i+1}$ or $\mathcal{E}_{j}$. Therefore, either  $\beta_{i+1}$ changes sign to $\beta_{i+1}<0$ and $\beta(t)$ arrives into the polygon with sides $\mathcal{E}_{i+1}, \mathcal{E}_{i+2}, \mathcal{E}_{j}, \mathcal{E}_{j+1}$, or $\beta_{j}$ changes to $\beta_{j}<0$ and $\beta(t)$ arrives into the polygon with $\mathcal{E}_{i}, \mathcal{E}_{i+1}, \mathcal{E}_{j}, \mathcal{E}_{j-1}$. Hence, the induction stops after passing each $\mathcal{E}_{i}$ and proves the desired configuration of signs.
\end{proof}

\begin{proof}[Proof of Proposition \ref{prop-1}]
If $p_{\beta}\mathbf{t}_{\mathcal{N}}^{\beta}$ is a monomial term of $\varphi^{+}(\mathbf{t}_{\mathcal{N}})$ then $\beta \in \alpha - \mathbb{R}_{\geq0}\langle \mathfrak{v}_{1}, \mathfrak{v}_{k}\rangle$, moreover not all $\beta_{\ell}$ are negative and we have sign configuration  as in Lemma \ref{lm-1}. 
To compute the multiplicity $\mathfrak{s}(\beta)$ we choose the ordering of nodes $n_{\ell}>n_{\ell+1}$ for all $\ell=1,\ldots,k-1$. If $\beta_{k}\geq0$ then $\mathfrak{s}_{n_{k}}(\beta)=1$ and $\mathfrak{s}_{n_{\ell}>n_{\ell+1}}(\beta)=0$ for all $\ell=1,\ldots,k-1$, thus $\mathfrak{s}(\beta)=\mathfrak{s}_{n_{k}}(\beta) + \sum_{\ell=1}^{k-1}\mathfrak{s}_{n_{\ell}>n_{\ell+1}}(\beta)=1$. If $\beta_{k}<0$ then $\mathfrak{s}_{n_{k}}(\beta)=0$ and $\mathfrak{s}_{n_{\ell}>n_{\ell+1}}(\beta)=0$ for all $\ell$ except for $\ell=j$, for which $\beta_{j}\geq0$ and $\beta_{j+1}<0$, thus $\mathfrak{s}(\beta)=1$ in this case too.
\end{proof}

\subsection{An example with higher multiplicities}\label{ss:ex} 
Consider the following negative definite plumbing graph $\Gamma$ given by the left hand side of the following picture.

\begin{center}
\begin{tikzpicture}
\def\mydotsize{0.04};
\def\rs{0.6}; 
\coordinate (v2) at ({4*\rs},0);
\draw[fill] (v2) circle (\mydotsize);
\coordinate (v0) at ({6*\rs},0);
\draw[fill] (v0) circle (\mydotsize);
\coordinate (v01) at ({8*\rs},0);
\draw[fill] (v01) circle (\mydotsize);

\coordinate (v1) at ({4.5*\rs},{2*\rs});
\draw[fill] (v1) circle (\mydotsize);
\coordinate (v3) at ({4.5*\rs},{-2*\rs});
\draw[fill] (v3) circle (\mydotsize);
\coordinate (v11) at ({5*\rs},{3.5*\rs});
\draw[fill] (v11) circle (\mydotsize);
\coordinate (v12) at ({3*\rs},{2.5*\rs});
\draw[fill] (v12) circle (\mydotsize);
\coordinate (v21) at ({2.5*\rs},{1*\rs});
\draw[fill] (v21) circle (\mydotsize);
\coordinate (v22) at ({2.5*\rs},{-1*\rs});
\draw[fill] (v22) circle (\mydotsize);
\coordinate (v31) at ({3*\rs},{-2.5*\rs});
\draw[fill] (v31) circle (\mydotsize);
\coordinate (v32) at ({5*\rs},{-3.5*\rs});
\draw[fill] (v32) circle (\mydotsize);

\draw (v0) edge (v01);
\draw  (v0) edge (v1);
\draw  (v1) edge (v11);
\draw  (v1) edge (v12);
\draw  (v0) edge (v2);
\draw  (v0) edge (v3);
\draw  (v2) edge (v21);
\draw  (v2) edge (v22);
\draw  (v3) edge (v31);
\draw  (v3) edge (v32);

\node [above right] at (v0) {\tiny $E_+$};
\node [right] at (v1) {\tiny $E_1$};
\node [above right] at (v01) {\tiny $E_{+1}$};
\node [above right] at (v2) {\tiny $E_2$};
\node [right] at (v3) {\tiny $E_3$};

\node at ({5*\rs},{4*\rs}) {\tiny $-3$};
\node at ({2.5*\rs},{2.5*\rs}) {\tiny $-2$};
\node at ({2*\rs},{1*\rs}) {\tiny $-3$};
\node at ({2*\rs},{-1*\rs}) {\tiny $-2$};
\node at ({2.5*\rs},{-2.5*\rs}) {\tiny $-3$};
\node at ({5*\rs},{-4*\rs}) {\tiny $-2$};
\node at ({6.3*\rs},{-0.5*\rs}) {\tiny $-22$};
\node at ({8*\rs},{-0.5*\rs}) {\tiny $-2$};
\node at ({4.25*\rs},{-1.6*\rs}) {\tiny $-1$}; 
\node at ({4*\rs},{-0.5*\rs}) {\tiny $-1$};
\node at ({4.25*\rs},{1.6*\rs}) {\tiny $-1$}; 

\node at ({0.5*\rs},0) {$\Gamma:$};

\coordinate (u2) at ({15*\rs},0) {};
\node (u0) at ({17*\rs},0) {};
\coordinate (u1) at ({15.5*\rs},{2*\rs});
\coordinate (u3) at ({15.5*\rs},{-2*\rs});

\draw[fill] (u0) circle (\mydotsize);
\draw[fill] (u1) circle (\mydotsize);
\draw[fill] (u2) circle (\mydotsize);
\draw[fill] (u3) circle (\mydotsize);

\draw[-stealth] (u3)--(u0);
\draw[-stealth] (u1)--(u0);
\draw[-stealth] (u2)--(u0);

\node [right] at (u0) {\tiny $-$};
\node [left] at (u1) {\tiny $+$};
\node [left] at (u2) {\tiny $+$};
\node [left] at (u3) {\tiny $-$};
\end{tikzpicture}
\end{center}
The associated plumbed 3-manifold is obtained by $(-7/2)$-surgery along the connected sum of three right handed trefoil knots in $S^3$. Its group $H\simeq \mathbb{Z}_7$ is cyclic of order $7$, generated by the class $[E_{+1}^*]$, where $E_{+1}^*$ is the dual base element in $L'$. For simplicity, we set $\bar{l}:=\pi_{\mathcal{N}}(l)$ for $l\in L\otimes\mathbb{Q}$ and use short notation $(l_+,l_1,l_2,l_3)$ for $\overline{l}=l_+E_+ + \sum_{i=1}^3 l_iE_i$.

Notice that every exponent $\beta=(\beta_+,\beta_1,\beta_2,\beta_3)$ appearing in $P^+(\mathbf{t}_{\mathcal{N}})$ can be written in the form $\beta=c_+\bar{E}^*_+ +\sum_{i=1}^3 c_i \bar{E}^*_i -\sum_{i=1}^3 \sum_{j=1}^2 x_{ij} \bar{E}^*_{ij}-x_{+1}\bar{E}^*_{+1}$ for some $0\leq c_+\leq 2$, $0\leq c_i\leq 1$ and $x_{ij},x_{+1}\geq 1$. Eg, for the choice $c_+=2$, $c_i=1$, $x_{+1}=x_{ij}=1$ for $i\in\{1,2\}$ and $x_{3j}=2$ we get $\beta_0=(-1/7,1/7,1/7,-34/7)$. Moreover, one can check that this is the only way to write $\beta_0$ in the above form. Therefore, the orientation given by the right hand side of the picture above implies that $\mathfrak{s}(\beta_0)=2$. In fact, $\beta_0$ belongs to $P_6^+(\mathbf{t}_{\mathcal{N}})$. 
Hence, by Theorem \ref{lm-0} $$P_6(\mathbf{t}_{\mathcal{N}})\neq P_6^+(\mathbf{t}_{\mathcal{N}}).$$ 

We also emphasize that the exponents
$$(-1/7,1/7,1/7,-34/7), (-1/7,1/7,-34/7,1/7), (-1/7,-34/7,1/7,1/7)  \ \mbox{with coefficient}\  p_{\beta}=1 \ \mbox{and}$$
$$(-1/7,1/7,1/7,-27/7), (-1/7,1/7,-27/7,1/7), (-1/7,-27/7,1/7,1/7) \ \mbox{with coefficient}\  p_{\beta}=-1$$ (all of them from $P_6^+(\mathbf{t}_{\mathcal{N}})$) are the only exponents with $\mathfrak{s}(\beta)=2>1$. Hence, 
although the two polynomials may be different, it still holds that $P_h(1)=P_h^+(1)=\mathfrak{sw}_h^{norm}$ for any $h\in \mathbb{Z}_7$.

\subsection{On the 3-manifold $S^3_{-p/q}(K)$}

\subsubsection{\bf Algebraic knots}\label{sec:algknots} Assume $K\subset S^3$ is an algebraic knot, ie. it is the link of an irreducible plane curve singularity defined by the function germ $\mathfrak{f}:(\mathbb{C}^2,0)\rightarrow (\mathbb{C},0)$. 

The {\it Newton pairs} of $K$ are the pairs
of integers $\{(p_i,q_i)\}_{i=1}^r$, where  $p_i\geq 2$, $q_i\geq
1$, $q_1>p_1$ and gcd$(p_i,q_i)=1$. They are the exponents appearing naturally in the normal form 
of $\mathfrak{f}$. From topological point of view, it is more convenient to use the {\it linking pairs} $(p_i,a_i)_{i=1}^r$ (the decorations of the splice diagram, cf.
\cite{EN}), which can be calculated recursively by 
\begin{equation}\label{eq:linkp}
a_1=q_1 \ \mbox{ and} \  a_{i+1}=q_{i+1}+a_i p_i p_{i+1} \ \mbox{ for }\
i\geq 1.
\end{equation}

The set of intersection multiplicities of $\mathfrak{f}$ with all possible analytic germs is a {\it numerical semigroup} denoted by $\mathcal{M}_\mathfrak{f}$. Although its definition is analytic, $\mathcal{M}_\mathfrak{f}$ is described combinatorially by its Hilbert basis: $p_1p_2\cdots p_r$, $a_{i}p_{i+1}\cdots p_r$ for $1\leq i \leq r-1$, and
$a_r$. In fact, $|\mathbb{Z}_{\geq 0}\setminus \mathcal{M}_\mathfrak{f}|=\mu_\mathfrak{f}/2$ (cf. \cite{Milnorbook}), where $\mu_\mathfrak{f}$ is the Milnor number of $\mathfrak{f}$. The 
Frobenius number of $\mathcal{M}_\mathfrak{f}$ is $\mu_\mathfrak{f}-1$, and  for
$\ell\leq \mu_\mathfrak{f}-1$ one has the symmetry:
\begin{equation}\label{eq:sym}
\ell\in \mathcal{M}_\mathfrak{f} \ \ \mbox{if and only if} \ \ \mu_\mathfrak{f}-1-\ell\not\in \mathcal{M}_\mathfrak{f}. 
\end{equation}
We emphasize that the integer $\delta_\mathfrak{f}:=\mu_\mathfrak{f}/2$ is called the delta-invariant of $\mathfrak{f}$, which equals the minimal Seifert genus of the knot $K$.

The {\it Alexander polynomial} $\Delta(t)$ of $K$ (normalized by $\Delta(1)=1$) can be calculated in terms of the linking pairs via the formula 
\begin{equation}\label{eq:Alex}
\Delta(t)=\frac{ (1-t^{a_1p_1p_2\cdots p_r})(1-t^{a_2p_2\cdots
p_r})\cdots (1-t^{a_r p_r})(1-t)}{(1-t^{a_1p_2\cdots
p_r})(1-t^{a_2p_3\cdots p_r})\cdots (1-t^{a_r})(1-t^{p_1\cdots
p_r})}.
\end{equation}
It has degree $\mu_\mathfrak{f}$. On the other hand, $\Delta(t)/(1-t)=\sum_{\ell\in \mathcal{M}_\mathfrak{f}} t^{\ell}$ is the \emph{monodromy zeta-function} 
of $\mathfrak{f}$  (cf. \cite{CDG99}), whose {\it polynomial part} is calculated explicitly by the gaps of the semigroup: $P_\mathfrak{f}(t)=-\sum_{\ell\notin \mathcal{M}_\mathfrak{f}} t^{\ell}$ (cf. \cite[7.1.2]{LSznew}). Hence, the degree of $P_{\mathfrak{f}}(t)$ equals $\mu_\mathfrak{f}-1$. 

The {\it embedded minimal good resolution graph} of $\mathfrak{f}$ (or the minimal negative-definite plumbing graph of $K$) has the shape of 
\begin{center}
\begin{tikzpicture}[scale=0.5]
\def\mydotsize{0.1}
\node at (-5.5,1.5) {};
\draw[fill] (-5.5,1.5) circle (\mydotsize);
\node at (-7.5,1.5) {};
\draw[fill] (-7.5,1.5) circle (\mydotsize);
\node at (-5.5,-0.5) {};
\draw[fill] (-5.5,-0.5) circle (\mydotsize);
\draw [dashed] (-5.5,1.5) edge (-7.5,1.5);
\draw [dashed] (-5.5,1.5) edge (-5.5,-0.5);
\draw [dashed] (-5.5,1.5) edge (-4,1.5);
\node at (-4,1.5) {};
\node at (-3.5,1.5) {};
\node at (-3,1.5) {};
\draw [dotted] (-3.5,1.5) edge (-3,1.5);
\node at (-2.5,1.5) {};
\node (v1) at (-1,1.5) {};
\draw[fill] (-1,1.5) circle (\mydotsize);
\node at (-1,-0.5) {};
\draw[fill] (-1,-0.5) circle (\mydotsize);
\node (v2) at (1.5,1.5) {};
\draw [dashed] (-1,1.5) edge (-2.5,1.5);
\draw [dashed] (-1,1.5) edge (-1,-0.5);
\draw [-latex] (v1) edge (v2);
\node at (-5.5,2) {\tiny$v_1$};
\node at (-1,2) {\tiny $v_r$};
\node at (-0.5,1) {\tiny $-1$};
\node at (2.5,1.5) {\tiny $K$};
\node at (-9.5,1.5) {$\Gamma_\mathfrak{f}:$};
\end{tikzpicture}
\end{center}
where the arrowhead, attached to the unique $(-1)$-vertex, represents the knot $K$. Its decorations can be calculated from the Newton pairs $\{(p_i,q_i)\}_{i}$ using eg. \cite{EN}, see also \cite[Section 4.I]{Nded}. The graph has an additional multiplicity decoration: the multiplicity of a vertex is the coefficient of the pullback-divisor of 
$\mathfrak{f}$ along the corresponding exceptional divisor, while the arrowhead
has the multiplicity decoration 1. Eg., we set $m_\mathfrak{f}:=a_r p_r$ to be the multiplicity of the $(-1)$--vertex. 

Notice that the isotopy type of $K\subset S^3$ is completely characterized by any of the following invariants highlighted above. For general references see \cite{BKcurves}, \cite{EN} and also the presentation of \cite{Nded} and \cite{NR}.

\subsubsection{\bf The plumbing of $S^3_{-p/q}(K)$}\label{ss:graph} Let $p/q>0$ ($p>0$, $\gcd(p,q)=1$) be a positive rational number and $\{K_j\}_{j=1}^\nu$ be  a collection of
algebraic knots. Then we consider the oriented 3-manifold $M= S^3_{-p/q}(K)$,
obtained by $(-p/q)$-surgery along the connected sum $K=K_1\#\cdots \#K_\nu\subset S^3$
of the knots $K_j$. All the invariants associated with $K_j$, listed in the previous section, will be indexed by $j$. Eg., the linking pairs of $K_j$ will be denoted by $(p^{(j)}_i,a^{(j)}_i)_{i=1}^{r_j}$, the Alexander polynomial by $\Delta^{(j)}(t)$ and $m^{(j)}$ stands for the multiplicity of the $(-1)$-vertex in the minimal plumbing graph of $K_j$ as above. Set also $m:=\sum_{j=1}^{\nu}m^{(j)}$.  

%
The schematic picture of the plumbing graph $\Gamma$ of the oriented
3--manifold $M=S^3_{-p/q}(K)$ has the following form (cf. \cite{BodNsw}):
\vspace{0.5cm}
\begin{center}
\begin{tikzpicture}[scale=.5]
\coordinate (v0) at (10.5,0);
\coordinate (vr1) at (6.5,4.5) {};
\coordinate (vrj) at (6.5,1);
\coordinate (vrv) at (6.5,-4);
\coordinate (v01) at (13,0);
\coordinate (v0s) at (18.5,0);

\draw[fill] (v0) circle (0.1);
\draw[fill] (vr1) circle (0.1);
\draw[fill] (vrj) circle (0.1);
\draw[fill] (vrv) circle (0.1);
\draw[fill] (v01) circle (0.1);
\draw[fill] (v0s) circle (0.1);

\draw  (vr1) edge (v0);
\draw  (vrj) edge (v0);
\draw  (vrv) edge (v0);
\draw  (v01) edge (v0);
\draw [dashed] (v01) edge (v0s);
\coordinate (v11) at (5.5,5) {};
\coordinate (v12) at (5.5,4) {} {};
\coordinate (v21) at (5.5,-3.5) {};
\coordinate (v22) at (5.5,-4.5) {};
\draw [dotted] (vr1) edge (v11);
\draw [dotted] (v12) edge (vr1);
\draw [dotted] (vrv) edge (v21);
\draw [dotted] (vrv) edge (v22);
\coordinate (erj) at (6.5,-1) {};
\coordinate (vr1j) at (2.5,1) {} {};
\coordinate (er1j) at (2.5,-1) {} {};
\draw[fill] (erj) circle (0.1);
\draw[fill] (vr1j) circle (0.1);
\draw[fill] (er1j) circle (0.1);

\coordinate (s1) at (1,1) {} {} {};
\coordinate (s1j) at (4,1) {} {} {};
\coordinate (s2) at (0.5,1) {} {} {};
\coordinate (s3) at (0,1) {} {} {};
\coordinate (s4) at (-0.5,1) {} {} {};
\coordinate (s3j) at (4.5,1) {} {} {};
\coordinate (s2j) at (5,1) {} {} {};
\coordinate (s0j) at (5.5,1) {} {} {};

\draw [dashed] (vrj) edge (erj);
\draw [dashed] (vr1j) edge (er1j);
\draw [dashed] (vr1j) edge (s1);
\draw [dashed] (vr1j) edge (s1j);
\draw [dotted] (s2) edge (s3);
\draw [dotted] (s2j) edge (s3j);
\draw [dashed] (vrj) edge (s0j);

\coordinate (v1j) at (-1.5,1) {} {};
\coordinate (e1j) at (-1.5,-1) {} {};
\coordinate (e2j) at (-3.5,1) {} {};
\draw[fill] (v1j) circle (0.1);
\draw[fill] (e1j) circle (0.1);
\draw[fill] (e2j) circle (0.1);

\draw [dashed] (v1j) edge (e2j);
\draw [dashed] (s4) edge (v1j);
\draw [dashed] (v1j) edge (e1j);

\draw [very thin] (3,-3) rectangle (7,-5); 
\draw [very thin] (3,5.5) rectangle (7,3.5); 
\draw [very thin] (-4.5,3) rectangle (7.6,-2.5); 

\node (v3) at (9,0) {};
\node (v4) at (9,-1) {};
\draw [dotted] (v3) edge (v4);

\node at (4,4) {\tiny$\Gamma^{(1)}$};
\node at (4,-2) {\tiny$\Gamma^{(j)}$};
\node at (4,-4.5) {\tiny$\Gamma^{(\nu)}$};
\node at (-1,1.5) {\tiny $v^{(j)}_1$};
\node at (2.5,2) {\tiny $v^{(j)}_{i}$};
\node at (6.5,1.5) {\tiny $v^{(j)}_{r_j}$};
\node at (7,0.5) {\tiny $-1$};
\node at (6.5,5) {\tiny $-1$};
\node at (6.5,-3.5) {\tiny $-1$};
\node at (10.5,0.5) {\tiny $v_+$};
\node at (13,0.5) {\tiny $v_{+1}$};
\node at (18.5,0.5) {\tiny $v_{+s}$};
\node at (10.5,-1) {\tiny $-k_0-m$};
\node at (13,-1) {\tiny $-k_1$};
\node at (18.5,-1) {\tiny $-k_s$};
\node at (-1,5) {$\Gamma:$};

\node at (1.5,1.5) {\color{purple}\tiny $a^{(j)}_i$};
\node at (3.5,1.5) {\color{purple}\tiny $D^{(j)}_i$};
\node at (2,0) {\color{purple}\tiny $p^{(j)}_i$};
\draw [blue] (-4,2.5) rectangle (3,-2); 
\node at (-3.2,-1.5) {\color{blue}\tiny$\Gamma_i^{(j)}$};
\end{tikzpicture} 
\end{center}
\vspace{0.5cm}
where the dash-lines represent strings of vertices. The integers $k_0\geq 1$ and $k_i\geq 2$ $(1\leq i \leq s)$, in the decorations of the vertices $v_{+i}$, are determined by the Hirzebruch/negative continued fraction expansion 
$$ p/q=[k_{0},\ldots, k_s]=
k_{0}-1/(k_{1}-1/(\cdots -1/k_{s})\cdots ).$$ 
We write $E_+$, $E_{+i}$ and $E^{(j)}_{i}$ for the base elements corresponding to the vertices $v_+$, $v_{+i}$ and $v^{(j)}_i$, respectively. 

It is also known that $H=L'/L\simeq \mathbb{Z}_p$ is the cyclic group of order $p$, generated by $[E^*_{+s}]$ (for a complete proof see \cite[Lemma 6]{BodNsw}).

In the above picture we have put at the node $v^{(j)}_{i}$ its splice diagram decorations $a^{(j)}_i$, $p^{(j)}_i$ and $D^{(j)}_i$ (cf. \cite{EN}). Eg., if we use notation 
\begin{equation}\label{eq:Gamma_ij}
\Gamma^{(j)}_i \ \ \mbox{for the subgraph spanned by the nodes $\{v^{(j)}_{i'}\}_{i'=1}^{i}$ and their corresponding end-vertices,}
\end{equation}
then $D^{(j)}_i=\det(\Gamma\setminus \Gamma^{(j)}_i)$. In particular, $\Gamma^{(j)}=:\Gamma^{(j)}_{r_j}$ and its self-intersection decorations are the same as of the embedded minimal good resolution graph of $K_j$, which we omit from the picture for simplicity. 

In the next lemma we prove some useful formulas. 

\begin{lemma}\label{lem:apD}
\begin{enumerate}[(i)]
 \item\label{apD-i} 
 $D^{(j)}_i = p + a^{(j)}_i p^{(j)}_i \left( p^{(j)}_{i+1}\cdots p^{(j)}_{r_{j}} \right)^{2} q,\ $ for $1\leq i\leq r_{j}$;
 \item\label{apD-ii} 
 $a^{(j)}_{i+1}D^{(j)}_i = q^{(j)}_{i+1} p + a^{(j)}_i p^{(j)}_i p^{(j)}_{i+1} D^{(j)}_{i+1},\ $ for $1\leq i\leq r_{j}-1$.
\end{enumerate}
\end{lemma}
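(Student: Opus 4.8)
The plan is to deduce (ii) from (i) by elementary algebra and to prove (i) by expanding $\det_{\Gamma\setminus\Gamma^{(j)}_i}$ along the central vertex $v_+$, after which the other knot summands drop out. For the first point, abbreviate $\pi_i:=p^{(j)}_ip^{(j)}_{i+1}\cdots p^{(j)}_{r_j}$; then (i) says $D^{(j)}_i=p+a^{(j)}_ip^{(j)}_i\pi_{i+1}^2\,q$, and since $\pi_{i+1}=p^{(j)}_{i+1}\pi_{i+2}$ also $D^{(j)}_{i+1}=p+a^{(j)}_{i+1}\pi_{i+1}^2\,q/p^{(j)}_{i+1}$. Substituting both into the two sides of (ii) and cancelling the common $q$-term $a^{(j)}_ip^{(j)}_ia^{(j)}_{i+1}\pi_{i+1}^2q$, the identity collapses to $a^{(j)}_{i+1}p=q^{(j)}_{i+1}p+a^{(j)}_ip^{(j)}_ip^{(j)}_{i+1}p$, i.e. to $a^{(j)}_{i+1}=q^{(j)}_{i+1}+a^{(j)}_ip^{(j)}_ip^{(j)}_{i+1}$, which is exactly the recursion (\ref{eq:linkp}) for the linking pairs; hence (ii) is a formal consequence of (i). (One could instead prove (ii) directly and recover (i) by downward induction on $i$ from the value at $i=r_j$, but we prove (i) head-on.)

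For (i) set $A:=\Gamma^{(j)}\setminus\Gamma^{(j)}_i$ — the part of the $j$-th arm above the $i$-th node, empty precisely when $i=r_j$ — and expand the determinant of $\Gamma\setminus\Gamma^{(j)}_i$ along $v_+$, whose self-intersection is $-k_0-m$ with $m=\sum_{j'}m^{(j')}$. Deleting $v_+$ splits $\Gamma\setminus\Gamma^{(j)}_i$ into the chain $[v_{+1},v_{+s}]$, the full arms $\Gamma^{(j')}$ with $j'\neq j$, and, when $i<r_j$, the connected graph $A$, which still contains $v^{(j)}_{r_j}$ adjacent to $v_+$. Three standard inputs feed the expansion: (a) the subchains on $v_{+1},\dots,v_{+s}$ and on $v_{+2},\dots,v_{+s}$ have determinants $q$ and $k_0q-p$, the usual continued-fraction identities attached to $p/q=[k_0,\dots,k_s]$; (b) $\det_{\Gamma^{(j')}}=1$, because $\Gamma^{(j')}$ carries the self-intersections of the minimal good embedded resolution graph of $K_{j'}$ and the corresponding plumbed $3$-manifold is $S^3$; (c) $\det_{\Gamma^{(j')}\setminus v^{(j')}_{r_{j'}}}=m^{(j')}$, because the total transform of the defining germ of $K_{j'}$ is a principal divisor, so $\sum_v m^{(j')}_v E_v=E^*_{v^{(j')}_{r_{j'}}}$ with $m^{(j')}_v$ the multiplicities, whence $-(E^*_{v^{(j')}_{r_{j'}}})^2$ equals the multiplicity $m^{(j')}$ of that vertex, and (\ref{eq:DETsgr}) applies inside $\Gamma^{(j')}$. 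Carrying out the expansion, the contributions of the $(-1)$-vertices of the other arms cancel against the summand $m$ in the decoration of $v_+$, leaving
\[
D^{(j)}_i=(p+m^{(j)}q)\,\det\nolimits_{A}-q\,\det\nolimits_{A\setminus v^{(j)}_{r_j}},
\]
interpreted as $p+m^{(j)}q$ when $i=r_j$; since $m^{(j)}=a^{(j)}_{r_j}p^{(j)}_{r_j}$ and the empty product is $1$, this already gives (i) for $i=r_j$.

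For $i<r_j$ it remains to establish the two arm-internal identities $\det_A=1$ and $\det_{A\setminus v^{(j)}_{r_j}}=m^{(j)}-a^{(j)}_i p^{(j)}_i\pi_{i+1}^2$, after which the displayed formula becomes (i). Both are statements about the resolution graph of the single algebraic knot $K_j$: $A$ is again the exceptional locus of a curve-germ resolution with smooth total space (after the usual adjustment of the string $[v^{(j)}_i,v^{(j)}_{i+1})$ left dangling from it), so $\det_A=1$ as in (b), and $\det_{A\setminus v^{(j)}_{r_j}}$ is the multiplicity of the $(-1)$-vertex of $A$, again by the argument of (c). To evaluate that multiplicity one invokes the Eisenbud--Neumann dictionary between $\Gamma^{(j)}$ and its splice diagram — in which $a^{(j)}_i$ and $p^{(j)}_i$ are precisely the relevant edge-determinants — or, more concretely, reduces $A$ to a bamboo by the leg-removal (Duchon) procedure used in the proof of Lemma \ref{lm-2} and finishes with the three-term chain-determinant identity of \cite[Lemma 2.1.2]{LSznew} together with (\ref{eq:linkp}). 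I expect this arm-internal bookkeeping — matching each sub-determinant of $\Gamma^{(j)}$ with the correct monomial in the $a^{(j)}_i$ and $p^{(j)}_i$, and keeping track of the dangling string — to be the only real difficulty; the several knot summands and Newton pairs merely inflate the indexing, since the summands $j'\neq j$ enter the computation only through $\det_{\Gamma^{(j')}}=1$.
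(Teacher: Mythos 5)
Your outer computation is correct and is, in itself, a different route from the paper's: you expand $\det_{\Gamma\setminus \Gamma^{(j)}_i}$ along $v_+$, feed in the continued--fraction determinants $q$ and $k_0q-p$ for the chain $v_{+1},\dots,v_{+s}$, the facts $\det_{\Gamma^{(j')}}=1$ and $\det_{\Gamma^{(j')}\setminus v^{(j')}_{r_{j'}}}=m^{(j')}$, and obtain the clean intermediate formula $D^{(j)}_i=(p+m^{(j)}q)\det_A-q\det_{A\setminus v^{(j)}_{r_j}}$ with $A:=\Gamma^{(j)}\setminus\Gamma^{(j)}_i$; the deduction of (ii) from (i) via the recursion (\ref{eq:linkp}) is also fine and is exactly what the paper does for (ii). The paper instead never expands along $v_+$: it identifies the whole graph $\Gamma\setminus\Gamma^{(j)}_i$ as the surgery graph of $S^3_{-p'/q}\bigl(K'_j\,\#\,\#_{j'\neq j}K_{j'}\bigr)$, where $K'_j$ is the knot with Newton pairs $(p^{(j)}_{i'},q^{(j)}_{i'})_{i'=i+1}^{r_j}$, and reads off $p'=D^{(j)}_i$ from \cite[Lemma 6]{BodNsw}.

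The genuine gap is that the two arm-internal identities your argument still needs, $\det_A=1$ and $\det_{A\setminus v^{(j)}_{r_j}}=m^{(j)}-a^{(j)}_ip^{(j)}_i\bigl(p^{(j)}_{i+1}\cdots p^{(j)}_{r_j}\bigr)^2$, are asserted but not proved, and they are precisely where the content of (i) lives: the coefficient $a^{(j)}_ip^{(j)}_i(p^{(j)}_{i+1}\cdots p^{(j)}_{r_j})^2$ is produced nowhere else in your computation. The claim that $A$, with its inherited decorations and the string left dangling from $(v^{(j)}_i,v^{(j)}_{i+1})$, is again the embedded resolution graph of an irreducible plane curve germ is exactly the nontrivial identification that must be justified, and your parenthetical ``after the usual adjustment of the string'' leaves unclear whether decorations are being changed (which would alter the determinant) or not; likewise the evaluation of the multiplicity of its $(-1)$-vertex is deferred to ``the Eisenbud--Neumann dictionary'' or an uncarried-out Duchon/bamboo reduction, and you yourself flag this bookkeeping as the only real difficulty. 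The paper closes precisely this point in one stroke: $A$ is the resolution graph of $K'_j$, whose linking pairs start with $\widetilde a^{(j)}_{i+1}=q^{(j)}_{i+1}$ and then follow (\ref{eq:linkp}), giving $\widetilde m^{(j)}=\widetilde a^{(j)}_{r_j}p^{(j)}_{r_j}=m^{(j)}-a^{(j)}_ip^{(j)}_i(p^{(j)}_{i+1}\cdots p^{(j)}_{r_j})^2$, which is exactly your missing $\det_{A\setminus v^{(j)}_{r_j}}$ (and gives $\det_A=1$ for free); the unchanged decoration $-k_0-m$ at $v_+$ then forces $p'/q=p/q+a^{(j)}_ip^{(j)}_i(p^{(j)}_{i+1}\cdots p^{(j)}_{r_j})^2$. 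Either import that identification --- after which your expansion along $v_+$ becomes an avoidable detour --- or actually carry out the determinant computation you postpone; as written, the decisive step is missing.
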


\begin{proof}
Let $K'_j$ be the knot with Newton pairs $(p^{(j)}_{i'},q^{(j)}_{i'})_{i'=i+1}^{r_{j}}$.
The graph $\Gamma\setminus \Gamma^{(j)}_i$ is the plumbing graph of the manifold $S^3_{-p'/q}(K'_j\# \ \#_{j'\neq j} K_{j'})$ for some $p'$ which can be computed as follows.  The new linking pairs $(p^{(j)}_{i'},\widetilde{a}^{(j)}_{i'})_{i'=i+1}^{r_{j}}$ can be calculated recursively using (\ref{eq:linkp}) and $\widetilde{a}^{(j)}_{i+1}=q^{(j)}_{i+1}$. Hence, we find the identity  
$$\widetilde{a}^{(j)}_r=a^{(j)}_r - a^{(j)}_i  p^{(j)}_i \left( p^{(j)}_{i+1}\cdots p^{(j)}_{r-1} \right)^{2} p^{(j)}_{r_{j}},$$
which implies that the multiplicity $\widetilde{m}^{(j)}$ of the $(-1)$-vertex in the embedded graph of $K'_j$ equals $\widetilde{a}^{(j)}_{r_{j}} p^{(j)}_{r_{j}} = \widetilde{m}^{(j)}-a^{(j)}_i p^{(j)}_i \left( p^{(j)}_{i+1}\cdots p^{(j)}_{r_{j}} \right)^2$. 
Since the decoration on $v_+$ remains unchanged we must have for the Hirzebruch/negative continued fraction 
$$
p'/q =\big[ k_0+a^{(j)}_i p^{(j)}_i \left( p^{(j)}_{i+1}\cdots p^{(j)}_{r_{j}} \right)^2,k_1,\dots,k_s \big] = p/q+a^{(j)}_ip^{(j)}_i \left( p^{(j)}_{i+1}\cdots p^{(j)}_{r_{j}} \right)^2.
$$ 
Finally, note that $p'=D^{(j)}_{i}$ is the determinant of the graph $\Gamma\setminus \Gamma^{(j)}_{i}$ \cite[Lemma 6]{BodNsw}. This concludes the formula of (\ref{apD-i}) . 
The recursive identity of (\ref{apD-ii}) can be easily verified using (\ref{apD-i}).  
\end{proof}

\subsubsection{\bf Seiberg--Witten invariant via Alexander polynomials}
We consider the product of the Alexander polynomials $\Delta(t):=\prod_{j}\Delta^{(j)}(t)$ with degree $\mu:=\sum_j \mu^{(j)}$. By the known facts $\Delta(1)=1$ and $\Delta'(1)=\mu/2$ we get a unique decomposition 
$$\Delta(t)=1+(\mu/2)(t-1)+(t-1)^2\cdot \mathcal{Q}(t)$$
for some polynomial with integral coefficients $\mathcal{Q}(t)=\sum _{i=0}^{\mu-2}\mathfrak{q}_i t^i$ of
degree $\mu-2$. 

We remark that the coefficients of $\mathcal{Q}$ has many interesting arithmetical properties. Eg., notice that $\mathfrak{q}_0=\mu/2$, $\mathfrak{q}_{\mu-2}=1$ and $\mathfrak{q}_{\mu-2-i} = \mathfrak{q}_i+i+1-\mu/2$ for $0\leq i \leq \mu-2$, given by the symmetry of $\Delta$. The explicit calculation of a general coefficient is rather hard, one can expect it to be connected with some counting function in a semigroup/affine monoid structure associated with the manifold $M$ (cf. \cite{LSznew}). In particular, if $\nu=1$ one can check that $\mathfrak{q}_i=\#\{n \not\in\mathcal{M}\ :\ n>i\}$, where $\mathcal{M}$ is the semigroup of the unique algebraic knot $K$. More details and discussions about these coefficients can be found eg. in \cite{BodN}. 

%
%
%

We look at the decomposition $\mathcal{Q}(t)=\sum_{h\in\mathbb{Z}_p} \mathcal{Q}_h(t)$ where $\mathcal{Q}_h(t):=\sum_{i\geq 0} \mathfrak{q}_{[(ip+h)/q]} t^{[\frac{ip+h}{q}]}$ and consider the following (different) normalization of the Seiberg--Witten invariants: 
\begin{equation} \label{eq:swnorm2}
\widetilde{\mathfrak{sw}}^{norm}_{a}(M):=-\mathfrak{sw}_{-[hE^*_{+s}]\ast\sigma_{can}}(M)- ((K+2hE^*_{+s})^2+\mathcal{V})/8 \ \ \ \ \ \mbox{for} \ \ 0\leq h< p.
\end{equation}
Then the following identity is known by \cite{BN,Ngr,NR}: 
\begin{equation}\label{thm:Q}
\mathcal{Q}_h(1)=\widetilde{\mathfrak{sw}}^{norm}_{h}(M). 
\end{equation} 

\subsubsection{\bf On the structure of the polynomial part}\label{ss:str} 
For any $h\in\mathbb{Z}_p$ consider the decomposition 
$$
f_{h}(\mathbf{t}_{\mathcal{N}}) = P^{+}_{h}(\mathbf{t}_{\mathcal{N}}) + f^{neg}_{h}(\mathbf{t}_{\mathcal{N}}),
$$ 
given by Lemma \ref{lem:+dec}, ie. $f_{h}^{neg}(\mathbf{t}_{\mathcal{N}})$ has negative degree in each variable and write $P^{+}_{h} (\mathbf{t}_{\mathcal{N}})= \sum_{\beta\in\mathcal{B}_h}p_{\beta}\mathbf{t}_{\mathcal{N}}^{\beta}$ where $\beta=(\beta_v)_{v\in\mathcal{N}}$ and $\beta\nless 0$. Let $\beta_+$ be the $E_+$-coefficient of $\beta$ and set $\mathcal{B}:=\bigcup_{h}\mathcal{B}_h$ too. For any polynomial $\mathcal{P}(\mathbf{t}_{\mathcal{N}})$ we consider the decomposition  $\mathcal{P}_{\beta_+\geq0}(\mathbf{t}_{\mathcal{N}})+\mathcal{P}_{\beta_+<0}(\mathbf{t}_{\mathcal{N}})$ so that the first part consists of those monomial terms for which  $\beta_+\geq 0$, and similarly, all the terms of the second part have $\beta_+<0$. 

By definitions we have $P^{+}_{h,\beta_+\geq 0} (\mathbf{t}_{\mathcal{N}})=P^{v_+}_h(\mathbf{t}_{\mathcal{N}})$ and  
Theorem \ref{lm-0} concludes that the monomial terms of $P_h(\mathbf{t}_{\mathcal{N}})$ are exactly of $P^{+}_{h} (\mathbf{t}_{\mathcal{N}})$ with multiplicities.  Therefore, in general, the difference polynomial $\mathcal{D}_h(\mathbf{t}_{\mathcal{N}}):=P_h(\mathbf{t}_{\mathcal{N}})-P^{v_+}_h(\mathbf{t}_{\mathcal{N}})$ consists of 
$P_{h,\beta_+<0}(\mathbf{t}_{\mathcal{N}})$ and the higher multiplicity terms ($\mathfrak{s}(\beta)\geq 2$) from $P_{h,\beta_+\geq0}(\mathbf{t}_{\mathcal{N}})$.

However, in the next theorem we show that there are no monomial terms in $P_{h,\beta_+\geq0}(\mathbf{t}_{\mathcal{N}})$ with $\mathfrak{s}(\beta)\geq 2$, ie. $\mathcal{D}_h(\mathbf{t}_{\mathcal{N}})=P_{h,\beta_+<0}(\mathbf{t}_{\mathcal{N}})$.

\begin{thm}\label{thm2}
$$P_{h,\beta_+\geq0}(\mathbf{t}_{\mathcal{N}})=P^{+}_{h,\beta_+\geq 0} (\mathbf{t}_{\mathcal{N}})=P^{v_+}_h(\mathbf{t}_{\mathcal{N}}).$$ 
\end{thm}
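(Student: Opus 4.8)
The middle equality $P^{+}_{h,\beta_+\geq 0}(\mathbf{t}_{\mathcal{N}})=P^{v_+}_h(\mathbf{t}_{\mathcal{N}})$ is already recorded just before the statement, and it is nothing more than the identity $P^{n'}_h(\mathbf{t}_{\mathcal{N}})=\sum_{\beta\in\mathcal{B}_h,\ \beta_{n'}\geq 0}p_\beta\mathbf{t}_{\mathcal{N}}^{\beta}$ established inside the proof of Theorem~\ref{lm-0}, taken at $n'=v_+$. So the whole content lies in the first equality. By Theorem~\ref{lm-0}, $P_h(\mathbf{t}_{\mathcal{N}})=\sum_{\beta\in\mathcal{B}_h}\mathfrak{s}(\beta)p_\beta\mathbf{t}_{\mathcal{N}}^{\beta}$; comparing monomials with $\beta_+\geq 0$ and using $\mathfrak{s}(\beta)\geq 1$ for $\beta\in\mathcal{B}_h$ (Remark~\ref{rk:poly-plus}(\ref{poly-plus-i})), the claim reduces to proving $\mathfrak{s}(\beta)=1$ whenever $\beta\in\mathcal{B}_h$ and $\beta_+\geq 0$.

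Next I would compute $\mathfrak{s}(\beta)$ with a well-chosen order. Since $v_+$ is a node of $\Gamma$ and $\Gamma^{orb}$ is the star with center $v_+$ and legs $v^{(j)}_{r_j}-v^{(j)}_{r_j-1}-\cdots-v^{(j)}_1$ ($1\leq j\leq\nu$), I orient all edges towards $v_+$; using that $\mathfrak{s}$ is order-independent, and that $\beta_+\geq 0$ makes $\mathfrak{s}_{v_+}(\beta)=1$ and $\mathfrak{s}_{v^{(j)}_{r_j}>v_+}(\beta)=0$ for every $j$, one obtains
$$
\mathfrak{s}(\beta)=1+\sum_{j=1}^{\nu}\#\bigl\{\,1\leq i\leq r_j-1\ :\ \beta_{v^{(j)}_i}\geq 0>\beta_{v^{(j)}_{i+1}}\,\bigr\}.
$$
Hence Theorem~\ref{thm2} is equivalent to the following sign-monotonicity statement: \emph{for each $\beta\in\mathcal{B}_h$ with $\beta_+\geq 0$ and each leg $j$, the set $\{\,i:\beta_{v^{(j)}_i}\geq 0\,\}$ is an ``upper'' interval $\{m,\dots,r_j\}$}, equivalently $\beta_{v^{(j)}_i}\geq 0\Rightarrow\beta_{v^{(j)}_{i+1}}\geq 0$.

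To prove this I would localize to a single leg. Writing an exponent of $P^{+}_h$ in its division form $\beta=\sum_nc_n\bar E^{*}_n-\sum_ex_e\bar E^{*}_e$ (as in \ref{ss:ex}, with $\bar E^{*}=\pi_{\mathcal{N}}(E^{*})$, the $c_n$ bounded and the $x_e\geq 1$), using \eqref{eq:DETsgr} together with the ``propagation'' identity $(E^{*}_v,E^{*}_w)=(E^{*}_v,E^{*}_c)(E^{*}_c,E^{*}_w)/(E^{*}_c,E^{*}_c)$, valid whenever $c$ separates $v$ and $w$ in the tree, one splits the coordinate at $v^{(j)}_i$ through $c=v_+$, resp.\ through $c=v^{(j)}_{r_j}$, into
$$
\beta_{v^{(j)}_i}=B^{(j)}\rho^{(j)}_i+L^{(j)}_i,\qquad \beta_{v_+}=B^{(j)}+\kappa_jL^{(j)}_{r_j},
$$
where $\rho^{(j)}_i>0$ and $\kappa_j>0$ depend only on $\Gamma$, the term $B^{(j)}$ depends only on the part of $\beta$ supported outside leg $j$, and $L^{(j)}_i$ depends only on the part supported on leg $j$. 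After Duchon's reduction of $\Gamma\setminus(\text{leg }j)$ to an adjustment of the decoration of $v^{(j)}_{r_j}$, the leg becomes an orbifold bamboo, so the vector $(L^{(j)}_1,\dots,L^{(j)}_{r_j})$ is of the type governed by Lemma~\ref{lm-1} (negatives, then a block of non-negatives, then negatives), and the relevant ``sign hyperplanes'' are crossed in a monotone order exactly as in Lemmas~\ref{lm-2}--\ref{lm-21}, now for the star-shaped graph, with the explicit coefficients furnished by the determinant formulas of Lemma~\ref{lem:apD}. Finally, the hypothesis $\beta_+\geq 0$, i.e.\ $B^{(j)}+\kappa_jL^{(j)}_{r_j}\geq 0$, kills the trailing block of negatives (it forces $L^{(j)}_{r_j}>0$ as soon as $B^{(j)}\leq 0$), turning the bamboo pattern into the required upper interval. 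Alternatively, one could make the exponents of $P^{v_+}_h$ completely explicit via the Alexander-polynomial package $\mathcal{Q}_h$ of \eqref{thm:Q} and run the same sign count there.

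The genuine obstacle is this sign-monotonicity claim, and within it the case $B^{(j)}<0$: there the sign of $\beta_{v^{(j)}_i}$ is a real competition between the ``outside'' term $B^{(j)}\rho^{(j)}_i$ --- constrained only by the single inequality $\beta_+\geq 0$ --- and the ``inside'' term $L^{(j)}_i$, and controlling it requires the precise monotonicity of $\rho^{(j)}_i$ together with the ordering of the thresholds for $L^{(j)}_i\geq 0$; both of these come from the determinantal identities of Lemma~\ref{lem:apD} used in tandem, in the spirit of how Duchon reduction underlies Lemma~\ref{lm-2}. Everything else --- the two reductions via Theorem~\ref{lm-0}, the order-independence of $\mathfrak{s}$, and the middle equality --- is bookkeeping.
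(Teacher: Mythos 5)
Your reduction is the same as the paper's: orient $\Gamma^{orb}$ towards $v_+$, note that $\beta_+\geq 0$ gives $\mathfrak{s}_{v_+}(\beta)=1$, and conclude via Theorem~\ref{lm-0} that everything hinges on showing $\mathfrak{s}_{v^{(j)}_i>v^{(j)}_{i+1}}(\beta)=0$ for all $i,j$, i.e.\ the sign configuration $\beta^{(j)}_1,\dots,\beta^{(j)}_{i-1}<0\leq\beta^{(j)}_i,\dots,\beta^{(j)}_{r_j},\beta_+$ along each leg. But this is exactly where your argument stops being a proof. The splitting $\beta_{v^{(j)}_i}=B^{(j)}\rho^{(j)}_i+L^{(j)}_i$, $\beta_+=B^{(j)}+\kappa_jL^{(j)}_{r_j}$ via the propagation identity is fine, and $\beta_+\geq 0$ with $B^{(j)}\leq 0$ does force $L^{(j)}_{r_j}\geq 0$; yet the implication you actually need, namely $B^{(j)}\rho^{(j)}_i+L^{(j)}_i\geq 0\Rightarrow B^{(j)}\rho^{(j)}_{i+1}+L^{(j)}_{i+1}\geq 0$ when $B^{(j)}<0$, is a quantitative comparison between the decay of $\rho^{(j)}_i$ and the growth of $L^{(j)}_i$. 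You yourself flag this as ``the genuine obstacle'' and assert it follows from Lemma~\ref{lem:apD} ``used in tandem'' and a Lemma~\ref{lm-2}-type threshold ordering ``now for the star-shaped graph'' --- but no such star-shaped analogue of Lemmas~\ref{lm-2}--\ref{lm-21} is stated or proved, and Lemma~\ref{lem:apD} by itself only gives the determinant values, not the needed two-term inequality (the paper's only quantitative estimate of this kind, Lemma~\ref{lem:a0}(\ref{a0-ii}), is proved later and only yields $P_{0,\beta_+<0}\equiv 0$ in the integral case $h=0$). So the heart of Theorem~\ref{thm2} is left as an unproven claim.

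The paper closes exactly this gap by a different mechanism, which your sketch does not contain: for each $j$ it restricts to the node set $\mathcal{N}_j=\{v^{(j)}_1,\dots,v^{(j)}_{r_j},v_+\}$ and proves the factorization $f(\mathbf{t}_{\mathcal{N}_j})=f_j(\mathbf{t}_{\mathcal{N}_j})\prod_{j'\neq j}\Delta^{(j')}(\mathbf{t}_{\mathcal{N}_j}^{E^*_+})$, where $f_j$ is the zeta-function of the smaller manifold $M_j=S^3_{-p/q}(K_j)$; after blowing up at $v_+$ (so that $v_+$ is again a node and the orbifold graph of the new graph is a bamboo), each summand of this product is a fraction of exactly the shape treated in Proposition~\ref{prop-1}, because the Alexander factors only contribute non-negative multiples of $\pi_{\mathcal{N}_j}(E^*_+)$ to the numerator exponents. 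Then Lemma~\ref{lm-1} delivers the sign configuration (\ref{eq:conf}) with no further estimates. If you want to salvage your coordinate-splitting route, you would have to prove the star-shaped threshold-ordering statement you invoke (a genuine extension of Lemma~\ref{lm-2}, since $\alpha$ no longer lies in a two-dimensional cone); as written, the proposal establishes only the bookkeeping reductions and the easy middle equality $P^{+}_{h,\beta_+\geq 0}=P^{v_+}_h$.
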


\begin{proof}
First of all we may assume that $\nu\geq 2$ otherwise we have the situation of section \ref{ss:orb}. We fix the orientation of $\Gamma^{orb}$ towards to the node $v_+$ and consider its induced partial order on $\mathcal{N}$ (see \ref{sec:alg}(2)). For any $\beta\in \mathcal{B}$ for which $\beta_+\geq 0$ one has $\mathfrak{s}_{v_+}(\beta)=1$, thus by Theorem \ref{lm-0} we have to prove that $\mathfrak{s}_{v^{(j)}_{i}>v^{(j)}_{i+1}}(\beta)=0$ for any $j\in\{1,\dots, \nu\}$ and $i\in\{1,\dots, r_j\}$. (We set $v^{(j)}_{i+1}:=v_+$.) In order to see this, we prove that the sign configuration on the subgraphs $\Gamma^{(j)}$ behaves exactly as in Lemma \ref{lm-1}. Thus, assuming $\beta_+\geq 0$, for any $j$ we show that
\begin{equation}\label{eq:conf}
\beta^{(j)}_1,\dots,\beta^{(j)}_{i-1}<0\leq \beta^{(j)}_i,\dots,\beta^{(j)}_{r_j},\beta_+ \ \ \mbox{for some} \ \ i\in\{1,\dots, r_j\}.
\end{equation}
Therefore, it is enough to show that Proposition \ref{prop-1} can be applied to the zeta-function $f(\mathbf{t}_{\mathcal{N}_j})$ reduced to the subset of nodes $\mathcal{N}_{j}$ consisting of $v_i^{(j)}$ and $v_+$ of $\Gamma$.

Indeed, for a fixed $j$ we construct a new plumbing graph $\Gamma_{M_j}$ by deleting all the subgraphs $\Gamma^{(j')}$ and its adjacent edges in $\Gamma$ for any $j'\neq j$ and modifying the decoration of $v_+$ into $-k_0-m^{(j)}$. Then the new graph $\Gamma_{M_j}$ is the plumbing graph of the manifold $M_j:=S^3_{-p/q}(K_j)$. Or, if we look at $\Gamma$ as the minimal good resolution graph of a normal surface singularity then one can obtain a new resolution graph by blowing down all the subgraphs $\Gamma^{(j')}$. In this resolution, the new exceptional divisor corresponding to the vertex $v_+$ is a rational curve with singular points and self-intersection $-k_0-m^{(j)}$. If we disregard the singularities of this divisor then we obtain a normal surface singularity whose link is $M_j$ and its minimal good resolution is $\Gamma_{M_j}$.

We distinguish the invariants of the new graphs in the following way: $L_j$ denotes the lattice associated with $\Gamma_{M_j}$ with base elements $E_{v,j}$, the dual lattice will be denoted by $L'_j$ with base elements $E^{*}_{v,j}$. 
We identify $\mathcal{N}_{j}$ of $\Gamma$ with the same set of vertices of $\Gamma_{M_j}$ (notice that $v_+$ is no longer a node in the last graph). Then one can also identify the base elements of $\pi_{\mathcal{N}_{j}}(L)$ and $\pi_{\mathcal{N}_{j}}(L_j)$. In particular, one can show that $\pi_{\mathcal{N}_{j}}(E^*_+)=\pi_{\mathcal{N}_{j}}(E^{*}_{+,j})$. 

Using the above identifications and formula (\ref{eq:Alex}) for Alexander polynomials one can check the following identity  
$$f(\mathbf{t}_{\mathcal{N}_j})=f_{j}(\mathbf{t}_{\mathcal{N}_j})\prod_{j'\neq j} \Delta^{(j')}(\mathbf{t}_{\mathcal{N}_j}^{E^*_+}),$$
where $f_{j}(\mathbf{t}_{\mathcal{N}_j})$ is the zeta-function associated with $\Gamma_{M_j}$ restricted to $\mathcal{N}_j$. The only problem is that $\mathcal{N}_j$  contains $v_+$ which is no longer a node in $\Gamma_{M_j}$. Nevertheless, 
we can blow up the vertex $v_+$ and denote the new graph by $\Gamma'_{M_j}$. Then the newly created $(-1)$-vertex is connected to $v_+$ (if $q=1$ then we can create two such $(-1)$-vertices), hence $v_+$ becomes a node of $\Gamma'_{M_j}$. Using the natural identifications we have $\pi_{\mathcal{N}_j}(E^{*}_{+,j})=\pi_{\mathcal{N}_j}(E^{*}_{b,j})$ where $E^{*}_{b,j}$ denotes the newly created dual base element. Moreover, one has $f_{j}(\mathbf{t}_{\mathcal{N}_j})=f'_{j}(\mathbf{t}_{\mathcal{N}_j})$, where $f'_j$ is associated with $\Gamma'_{M_j}$. 

Finally, the rational function $f'_j(\mathbf{t}_{\mathcal{N}_j})\prod_{j'\neq j} \Delta^{(j')}(\mathbf{t}_{\mathcal{N}_j}^{E^*_+})$ is the sum of rational fractions as in Proposition \ref{prop-1} which implies the sign configuration (\ref{eq:conf}) by Lemma \ref{lm-1}. 

%
\end{proof}

We notice that for the difference polynomial $\mathcal{D}_h(\mathbf{t}_{\mathcal{N}}):=P_h(\mathbf{t}_{\mathcal{N}})-P^{v_+}_h(\mathbf{t}_{\mathcal{N}})$ one has 
$$
\mathcal{D}_h(1)=\mathfrak{sw}^{norm}_{h}(M)-\widetilde{\mathfrak{sw}}^{norm}_{h}(M)=\chi(r_{[hE^*_{+s}]})-\chi(hE^*_{+s}),
$$
where $\chi(l'):=-(K+l',l')/2$ for any $l'\in L'$. This follows from (\ref{eq:polpsw}), (\ref{thm:Q}) and the fact that $P^{v_+}_h(t)=\mathcal{Q}_{h}(t)$, which is proven in \cite[8.1]{BN}. Thus, Theorem \ref{thm2} implies that $P_{h,\beta_+<0}$ counts only the difference between the normalizations and the Seiberg--Witten information is contained in $P^{+}_{h,\beta_+\geq 0}$. 

\subsubsection{\bf Canonical case $h=0$} 
From geometric point of view the main interest focuses to the case when $h=0$, since $f_0(\mathbf{t}_{\mathcal{N}})$ is related with analytic Poincar\'e series associated with a normal surface singularity whose link is $M$ (cf. Section \ref{s:ps}, eg. in the case when $q=1$ the manifold $M=S^3_{-p}(K)$ may appear as the link of a superisolated singularity). 

In this case one has $\mathcal{D}_0(1)=P_{0,\beta_+<0}(1)=0$, although it may happen that there are some monomial terms appearing in $P_{0,\beta_+<0}$. This can indeed occur for $h\neq 0$ as shown by the example from Section \ref{ss:ex}. 
However, in the sequel we prove that for $h=0$ this is not the case, ie. 
$$P^+_{0,\beta_+<0}(\mathbf{t}_{\mathcal{N}})=P_{0,\beta_+<0}(\mathbf{t}_{\mathcal{N}})\equiv 0.$$ Thus, we have $P_0(\mathbf{t}_{\mathcal{N}})=P^+_0(\mathbf{t}_{\mathcal{N}})$, in particular $P^+_{0}(1)=\mathfrak{sw}^{norm}_0(M)$.


\begin{lemma}\label{lem:a0}
Let $\mathfrak{f}^{(j)}_i$ be the irreducible plane curve singularity with Newton pairs $(p^{(j)}_{i'},q^{(j)}_{i'})_{i'=1}^i$ for any $1\leq j\leq \nu$ and $1\leq i\leq r_j$ and its associated semigroup will be denoted by $\mathcal{M}_{\mathfrak{f}^{(j)}_i}$. 
For any $\beta\in \mathcal{B}$, $1\leq j\leq \nu$ and $1\leq i\leq r_j$ we have the following relations
\begin{enumerate}[(i)]
 \item\label{a0-i}
 $$a^{(j)}_{i+1}\beta^{(j)}_{i}=a^{(j)}_{i}p^{(j)}_{i}\beta^{(j)}_{i+1}+q^{(j)}_{i+1}\ell_{\mathfrak{f}^{(j)}_i}^{\beta},$$ where $\ell_{\mathfrak{f}^{(j)}_i}^{\beta} \in \mathbb{Z}\setminus \mathcal{M}_{\mathfrak{f}^{(j)}_i}$ depending on $\beta$. In particular, for $i=r_j$ we set $a^{(j)}_{r_j+1}:=1$, $q^{(j)}_{r_j+1}:=1$ and $\beta^{(j)}_{r_j+1}:=\beta_+$, hence the identity becomes $\beta^{(j)}_{r_j}=m^{(j)}\beta_+ + \ell_{\mathfrak{f}^{(j)}}^{\beta}$.
 \item\label{a0-ii} 
 $$\beta^{(j)}_{i}<a^{(j)}_{i}p^{(j)}_{i}\cdots p^{(j)}_{r_j}(\beta_+ +1).$$
\end{enumerate}
\end{lemma}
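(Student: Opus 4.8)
\textbf{Proof plan for Lemma \ref{lem:a0}.}

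The plan is to establish both identities by a descending recursion on $i$ from $r_j$ towards $1$, using the structure of the exponents $\beta\in\mathcal{B}$ described in Section \ref{ss:ex}: every $\beta$ arising in $P^+_h$ is of the form $\beta = c_+\bar E^*_+ + \sum_{i}c^{(j)}_i\bar E^{*(j)}_i - \text{(positive combination of end-vertex duals)}$, with the coefficients $c$ in bounded ranges dictated by the valencies. First I would write out the intersection numbers $(\beta^{(j)}_i$ being the coefficient of $E^{(j)}_i$ in $\pi_{\mathcal N}(\beta))$ in terms of these $c$'s using \eqref{eq:DETsgr}, so that $\beta^{(j)}_i$ becomes an explicit $\mathbb{Z}$-linear combination of determinants of subgraphs $\Gamma\setminus[\,\cdot\,,\,\cdot\,]$. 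The point is that the splice-diagram identities of Lemma \ref{lem:apD} — in particular the recursion $a^{(j)}_{i+1}D^{(j)}_i = q^{(j)}_{i+1}p + a^{(j)}_i p^{(j)}_i p^{(j)}_{i+1}D^{(j)}_{i+1}$ — are exactly the relations among these determinants needed to convert the linear combination expressing $a^{(j)}_{i+1}\beta^{(j)}_i$ into $a^{(j)}_i p^{(j)}_i\beta^{(j)}_{i+1}$ plus a $q^{(j)}_{i+1}$-multiple of a leftover term; that leftover term is what we must identify with $\ell^\beta_{\mathfrak f^{(j)}_i}$.

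For part \eqref{a0-i}, having produced the identity $a^{(j)}_{i+1}\beta^{(j)}_i = a^{(j)}_i p^{(j)}_i\beta^{(j)}_{i+1} + q^{(j)}_{i+1}\ell$ with $\ell\in\mathbb{Z}$, the remaining work is to show $\ell\notin\mathcal M_{\mathfrak f^{(j)}_i}$. Here I would use the description of $\mathcal M_{\mathfrak f^{(j)}_i}$ by its Hilbert basis $p^{(j)}_1\cdots p^{(j)}_i$ and $a^{(j)}_{i'}p^{(j)}_{i'+1}\cdots p^{(j)}_i$, together with the semigroup symmetry \eqref{eq:sym} and the fact (cf. the one-node/one-knot computation recalled in \ref{sec:algknots}, where $P_{\mathfrak f}(t) = -\sum_{\ell\notin\mathcal M_{\mathfrak f}}t^\ell$) that the exponents appearing in the polynomial part of a monodromy zeta-function are precisely the gaps of the semigroup. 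Concretely, the coefficients $c^{(j)}_{i'}$ and the end-multiplicities of $\beta$ are constrained so that $\ell$ lies in the range below the Frobenius number $\mu_{\mathfrak f^{(j)}_i}-1$ and is visibly not representable; descending induction lets me assume $\ell^\beta_{\mathfrak f^{(j)}_{i+1}}\notin\mathcal M_{\mathfrak f^{(j)}_{i+1}}$ and push it down one Newton pair. The base case $i = r_j$ is the statement $\beta^{(j)}_{r_j} = m^{(j)}\beta_+ + \ell^\beta_{\mathfrak f^{(j)}}$ with $\ell^\beta_{\mathfrak f^{(j)}}\notin\mathcal M_{\mathfrak f^{(j)}}$, which I would read off directly from the local shape of $\Gamma$ near $v^{(j)}_{r_j}$ and $v_+$ (the $(-1)$-vertex has multiplicity $m^{(j)} = a^{(j)}_{r_j}p^{(j)}_{r_j}$), again invoking that the exponents in $P^+$ restricted to that arm are gaps.

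For part \eqref{a0-ii}, the inequality $\beta^{(j)}_i < a^{(j)}_i p^{(j)}_i\cdots p^{(j)}_{r_j}(\beta_+ + 1)$ should follow by iterating \eqref{a0-i}: solving the recursion downward expresses $\beta^{(j)}_i$ as $a^{(j)}_i p^{(j)}_i\cdots p^{(j)}_{r_j}\beta_+$ plus a positive-coefficient combination of the gaps $\ell^\beta_{\mathfrak f^{(j)}_{i'}}$ for $i'\ge i$, and each such gap is at most $\mu_{\mathfrak f^{(j)}_{i'}}-1$; a bookkeeping estimate, using that $\mu_{\mathfrak f^{(j)}_{i'}}$ is itself controlled by the relevant products of Newton pairs (the conductor formula), should bound the whole correction term by $a^{(j)}_i p^{(j)}_i\cdots p^{(j)}_{r_j}$, giving the strict inequality. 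The main obstacle I anticipate is part \eqref{a0-i} — specifically, pinning down that the integer $\ell$ produced by the determinant manipulation genuinely fails to lie in $\mathcal M_{\mathfrak f^{(j)}_i}$ rather than merely being some integer; this requires carefully tracking the allowed ranges of the coefficients $c^{(j)}_{i'}$ and the end-vertex exponents in a generic $\beta\in\mathcal{B}$ and matching them against the combinatorics of the semigroup, and it is where the hypothesis $\beta\nless 0$ (i.e.\ $\beta$ comes from $P^+$, not from an arbitrary lattice point) must be used in an essential way.
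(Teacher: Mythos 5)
Your structural set-up for part (\ref{a0-i}) --- expanding $\beta\in\mathcal{B}$ in the dual basis, computing $\beta^{(j)}_{i}$ via (\ref{eq:DETsgr}), and converting the resulting determinant combination with Lemma \ref{lem:apD}(\ref{apD-ii}) --- agrees with the paper, and your outline of part (\ref{a0-ii}) (iterate (\ref{a0-i}), bound each $\ell^{\beta}_{\mathfrak{f}^{(j)}_{i'}}$ by $\mu_{\mathfrak{f}^{(j)}_{i'}}-1$, and control the Milnor numbers through the recursion $\mu_{\mathfrak{f}^{(j)}_{i'}}=(a^{(j)}_{i'}-1)(p^{(j)}_{i'}-1)+p^{(j)}_{i'}\mu_{\mathfrak{f}^{(j)}_{i'-1}}$) is essentially the paper's computation; note only that after the telescoping the correction term equals $a^{(j)}_{i}p^{(j)}_{i}\cdots p^{(j)}_{r_j}-(m_{\mathfrak{f}^{(j)}_{i}}-\mu_{\mathfrak{f}^{(j)}_{i}}+1)$, so the strictness rests on $m_{\mathfrak{f}^{(j)}_{i}}-\mu_{\mathfrak{f}^{(j)}_{i}}+1>0$ (cited in the paper from \cite[Theorem 4.12(a)]{Nded}), a fact your bookkeeping must supply. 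The genuine gap is the heart of part (\ref{a0-i}): your argument that the leftover integer $\ell$ avoids $\mathcal{M}_{\mathfrak{f}^{(j)}_i}$. Lying below the Frobenius number does not make an integer ``visibly not representable'', and the proposed descending induction ``pushing the gap down one Newton pair'' has no mechanism behind it: the statement for index $i$ is not formally a consequence of the one for $i+1$, since there is no map from gaps of $\mathcal{M}_{\mathfrak{f}^{(j)}_{i+1}}$ to gaps of $\mathcal{M}_{\mathfrak{f}^{(j)}_{i}}$ through which your leftover term factors; indeed the paper proves (\ref{a0-i}) for every $i$ by one direct computation, with no induction at all.

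What the paper actually does at this point is compute the partial sum $\beta^{\Gamma^{(j)}_i}_{i+1}$ explicitly from the dual-basis expansion (\ref{eq:sumbet}), where membership $\beta\in\mathcal{B}$ forces the node coefficients $k^{(j)}_{i'}\in\{0,1\}$ and the end coefficients $-x^{(j)}_{v}$ with $x^{(j)}_{v}\geq 1$; this yields formula (\ref{eq:beta-explained}), and then $\ell^{\beta}_{\mathfrak{f}^{(j)}_i}=\frac{p}{p^{(j)}_{i+1}D^{(j)}_{i+1}}\beta^{\Gamma^{(j)}_i}_{i+1}$ is literally of the shape ``a numerator exponent of the product formula (\ref{eq:Alex}) for $\sum_{\ell\in\mathcal{M}_{\mathfrak{f}^{(j)}_i}}t^{\ell}$ minus a strictly positive integral combination of the denominator exponents'', i.e.\ an exponent produced by the one-variable Euclidean division (\ref{eq:div}) applied to the monodromy zeta-function of $\mathfrak{f}^{(j)}_i$. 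By \cite[7.1.2]{LSznew} such an exponent is either negative or a gap of the semigroup, which is exactly the claim $\ell^{\beta}_{\mathfrak{f}^{(j)}_i}\in\mathbb{Z}\setminus\mathcal{M}_{\mathfrak{f}^{(j)}_i}$. This identification --- not a Hilbert-basis, symmetry, or induction argument --- is precisely where the hypothesis that $\beta$ comes from the multivariable division of Lemma \ref{lem:+dec} enters, and it is the step missing from your plan; you correctly flagged it as the main obstacle, but the substitute route you sketch would not close it.
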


\begin{proof}
(\ref{a0-i}) We can write
\begin{equation}\label{eq:sumbet}
\beta=k_+E_+^*+\sum_j(\sum_i k^{(j)}_i E_i^{(j)*}-\sum_{v\in \mathcal{E}^{(j)}}x^{(j)}_v E_v^{(j)*})-x_+E^*_{+s}
\end{equation}
for some integers $0\leq k_+\leq \nu-1$, $k^{(j)}_i\in \{0,1\}$ and $x^{(j)}_v,x_+ \geq 1$, where we use notation $\mathcal{E}^{(j)}$ for the set of end-vertices of $\Gamma^{(j)}$. 

For any subgraph $\Gamma'$ of $\Gamma$ let us denote by $\beta^{\Gamma'}$ the partial sum considering only those terms from the right hand side of (\ref{eq:sumbet}) which  are associated with the nodes and end-vertices of $\Gamma'$. Recall that we have defined the subgraphs $\Gamma^{(j)}_i$ in (\ref{eq:Gamma_ij}). Then, we claim that 
\begin{equation}\label{eq:longcalc}
a^{(j)}_{i+1}\beta^{(j)}_{i}:=a^{(j)}_{i+1}\cdot(\beta,-E_i^{(j)*})=a^{(j)}_{i}p^{(j)}_{i}\beta^{(j)}_{i+1}+\Big(\frac{a^{(j)}_{i+1}D^{(j)}_{i}}{p^{(j)}_{i+1}D^{(j)}_{i+1}}-a^{(j)}_{i}p^{(j)}_{i}\Big)\cdot\beta^{\Gamma^{(j)}_i}_{i+1}.
\end{equation}
For the above expression we have used the following identities: $a^{(j)}_{i+1}(E_v^*,E_i^{(j)*})$ equals either with $\frac{a^{(j)}_{i+1}D^{(j)}_{i}}{p^{(j)}_{i+1}D^{(j)}_{i+1}}(E_v^*,E_{i+1}^{(j)*})$ in the case when $v$ is a node or an end-vertex of $\Gamma^{(j)}_i$, or, with $a^{(j)}_{i}p^{(j)}_{i}(E_v^*,E_{i+1}^{(j)*})$ otherwise. Moreover, one can check from (\ref{eq:sumbet}) that 
\begin{equation}\label{eq:beta-explained}
\beta^{\Gamma^{(j)}_i}_{i+1}=\frac{p^{(j)}_{i+1}D^{(j)}_{i+1}}{p}\Big(\sum_{i'=1}^{i}\big(k_{i'}^{(j)}\cdot a^{(j)}_{i'} p^{(j)}_{i'}\dots p^{(j)}_{i}-x^{(j)}_{v_{i'}}\cdot a^{(j)}_{i'}p^{(j)}_{i'+1}\dots p^{(j)}_{i}\big) - x^{(j)}_{v_{0}}\cdot p^{(j)}_{1}\dots p^{(j)}_{i}\Big). 
\end{equation}
Now, the idea is that by (\ref{eq:beta-explained}) and (\ref{eq:Alex}) the quantity 
$\ell_{\mathfrak{f}^{(j)}_i}^{\beta}:=p/(p^{(j)}_{i+1}D^{(j)}_{i+1})\beta^{\Gamma^{(j)}_i}_{i+1}$ can be viewed as an exponent coming from the division of the monodromy zeta-function of $\mathfrak{f}^{(j)}_i$. Hence, it is either negative or it is an exponent of the polynomial part of the monodromy zeta-function which implies  $\ell_{\mathfrak{f}^{(j)}_i}^{\beta}\notin \mathcal{M}_{\mathfrak{f}^{(j)}_i}$ by \cite{LSznew}. Therefore (\ref{eq:longcalc}) transforms into
$$a^{(j)}_{i+1}\beta^{(j)}_{i}=a^{(j)}_{i}p^{(j)}_{i}\beta^{(j)}_{i+1}+\frac{a^{(j)}_{i+1}D^{(j)}_{i}-a^{(j)}_{i}p^{(j)}_{i}p^{(j)}_{i+1}D^{(j)}_{i+1}}{p} \ell_{\mathfrak{f}^{(j)}_i}^{\beta}=a^{(j)}_{i}p^{(j)}_{i}\beta^{(j)}_{i+1}+q^{(j)}_{i+1}\ell_{\mathfrak{f}^{(j)}_i}^{\beta},$$
where the second equality uses Lemma \ref{lem:apD}(\ref{apD-ii}).

(\ref{a0-ii}) According to the proof of part (\ref{a0-i}) and section \ref{sec:algknots} we can write $\ell_{\mathfrak{f}^{(j)}_i}^{\beta}=\mu_{\mathfrak{f}^{(j)}_i}-1-s_{\mathfrak{f}^{(j)}_i}$ for some $s_{\mathfrak{f}^{(j)}_i}\in \mathcal{M}_{\mathfrak{f}^{(j)}_i}$. Therefore (\ref{a0-i}) implies 
$\beta^{(j)}_{i}\leq (a^{(j)}_{i}p^{(j)}_{i}/a^{(j)}_{i+1})\beta^{(j)}_{i+1}+(q^{(j)}_{i+1}/a^{(j)}_{i+1})(\mu_{\mathfrak{f}^{(j)}_i}-1)$, which induces the following inequality 
\begin{equation}\label{eq:ineq}
\beta^{(j)}_{i} \leq 
a^{(j)}_{i}p^{(j)}_{i}\cdots p^{(j)}_{r_j}\beta_{+} 
+ 
\frac{q^{(j)}_{i+1}}{a^{(j)}_{i+1}}(\mu_{\mathfrak{f}^{(j)}_i}-1)
+
\sum_{i'=i+1}^{r_j}\frac{a^{(j)}_{i}p^{(j)}_{i}\cdots p^{(j)}_{i'-1}q^{(j)}_{i'+1}}{a^{(j)}_{i'}a^{(j)}_{i'+1}}(\mu_{\mathfrak{f}^{(j)}_{i'}}-1).
\end{equation}
We apply (\ref{eq:linkp}) on $q_{i'+1}^{(j)}$ to get
\begin{equation}\label{eq:ineq2}
\beta^{(j)}_{i} \leq 
a^{(j)}_{i}p^{(j)}_{i}\cdots p^{(j)}_{r_j}\beta_{+} 
+ 
(\mu_{\mathfrak{f}^{(j)}_i}-1)
+
\sum_{i'=i+2}^{r_j}\frac{a^{(j)}_{i}p^{(j)}_{i}\cdots p^{(j)}_{i'-1}}{a^{(j)}_{i'}}(\mu_{\mathfrak{f}^{(j)}_{i'}}-1) - \frac{a^{(j)}_{i}p^{(j)}_{i}\cdots p^{(j)}_{i'}}{a^{(j)}_{i'}}(\mu_{\mathfrak{f}^{(j)}_{i'-1}}-1).
\end{equation}
 
We use a well-known recursive formula $\mu_{\mathfrak{f}^{(j)}_{i'}}=(a^{(j)}_{i'}-1)(p^{(j)}_{i'}-1)+p^{(j)}_{i'}\mu_{\mathfrak{f}^{(j)}_{i'-1}}$ (see eg. \cite[(4.13)]{Nded}) for the Milnor numbers of $\mathfrak{f}^{(j)}_{i'}$, which can be rewritten for our purpose in the form
$$
\frac{a^{(j)}_{i}p^{(j)}_{i}\cdots p^{(j)}_{i'-1}}{a^{(j)}_{i'}}(\mu_{\mathfrak{f}^{(j)}_{i'}}-1) 
-
\frac{a^{(j)}_{i}p^{(j)}_{i}\cdots p^{(j)}_{i'}}{a^{(j)}_{i'}}(\mu_{\mathfrak{f}^{(j)}_{i'-1}}-1)
= 
a^{(j)}_{i}p^{(j)}_{i}\cdots p^{(j)}_{i'}
-
a^{(j)}_{i}p^{(j)}_{i}\cdots p^{(j)}_{i'-1}.
$$
Finally, this recursion can be applied repeatedly to (\ref{eq:ineq}) in order to deduce
$$
\beta^{(j)}_{i} 
\leq 
a^{(j)}_{i}p^{(j)}_{i}\cdots p^{(j)}_{r_j}\beta_+ 
+
\sum_{i'={i+1}}^{r_j}(a^{(j)}_{i}p^{(j)}_{i}\cdots p^{(j)}_{i'} - a^{(j)}_{i}p^{(j)}_{i}\cdots p^{(j)}_{i'-1})
+\mu_{\mathfrak{f}^{(j)}_{i}}-1<a^{(j)}_{i}p^{(j)}_{i}\cdots p^{(j)}_{r_j}(\beta_+ +1),
$$
where the second (strict) inequality uses \cite[Theorem 4.12(a)]{Nded}, saying that $m_{\mathfrak{f}^{(j)}_{i}}-\mu_{\mathfrak{f}^{(j)}_{i}}+1\geq 2|\mathcal{V}(\Gamma_{i}^{(j)})|-1>0$.
\end{proof}

\begin{prop}\label{prop:can}
For $h=0$ one has $P_0(\mathbf{t}_{\mathcal{N}})=P^+_0(\mathbf{t}_{\mathcal{N}})=P^{v_+}_0(\mathbf{t}_{\mathcal{N}})$. 
\end{prop}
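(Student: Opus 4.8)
The plan is to deduce the statement from Theorem~\ref{thm2} together with the estimate of Lemma~\ref{lem:a0}\,(\ref{a0-ii}). By Theorem~\ref{thm2} we already have $P_{0,\beta_+\geq0}(\mathbf{t}_{\mathcal{N}})=P^{+}_{0,\beta_+\geq0}(\mathbf{t}_{\mathcal{N}})=P^{v_+}_0(\mathbf{t}_{\mathcal{N}})$, so the whole assertion will follow once we know that $P^{+}_{0}$ has no monomial term with negative $E_+$-exponent, i.e. that $\mathcal{B}_0\subseteq\{\beta:\beta_+\geq0\}$. Indeed, granting this, $P^+_0=P^+_{0,\beta_+\geq0}=P^{v_+}_0$; and since by Theorem~\ref{lm-0} every exponent of $P_0$ already occurs among those of $P^+_0$, $P_0$ also has no monomial with $\beta_+<0$, whence $P_0=P_{0,\beta_+\geq0}=P^{v_+}_0=P^+_0$ (the consequence $P^+_0(1)=\mathfrak{sw}^{norm}_0(M)$ then being immediate from (\ref{eq:polpsw})). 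So the first step is this reduction, and the real content is the claim $\mathcal{B}_0\subseteq\{\beta:\beta_+\geq0\}$.

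To prove that claim I would use an integrality feature special to the canonical class. Since $r_0=0$, the reduced zeta-function $f_0(\mathbf{t}_{\mathcal{N}})$ --- and hence the quotient $P^+_0(\mathbf{t}_{\mathcal{N}})$ obtained from it by the Euclidean division of Section~\ref{ss:multidivision} --- is supported on $\pi_{\mathcal{N}}(L)=\mathbb{Z}\langle E_n\rangle_{n\in\mathcal{N}}$, so every exponent $\beta\in\mathcal{B}_0$ has integer entries. Thus, if some $\beta\in\mathcal{B}_0$ had $\beta_+<0$, then in fact $\beta_+\leq-1$ and so $\beta_++1\leq0$. Substituting this into Lemma~\ref{lem:a0}\,(\ref{a0-ii}) yields, for every $j\in\{1,\dots,\nu\}$ and every $1\leq i\leq r_j$,
$$
\beta^{(j)}_i< a^{(j)}_i p^{(j)}_i\cdots p^{(j)}_{r_j}\,(\beta_++1)\leq 0,
$$
since $a^{(j)}_i p^{(j)}_i\cdots p^{(j)}_{r_j}$ is a positive integer. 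Consequently all node-coordinates of $\beta$ --- namely $\beta_+$ together with the $\beta^{(j)}_i$ --- are strictly negative, i.e. $\beta<0$, contradicting the defining property $\beta\nless 0$ of the exponents of $P^+_0$ (Lemma~\ref{lem:+dec}). This contradiction gives $\beta_+\geq0$ for every $\beta\in\mathcal{B}_0$, completing the argument.

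I do not anticipate a serious obstacle, since the heavy machinery is already in place: Theorem~\ref{thm2} carries the sign-configuration analysis of Section~\ref{ss:orb}, and Lemma~\ref{lem:a0} carries the Milnor-number recursions. The single delicate point to flag is the integrality of $\beta_+$ in the case $h=0$: it is exactly what converts ``$\beta_+<0$'' into ``$\beta_++1\leq0$'', and it genuinely fails for other classes --- in the example of Section~\ref{ss:ex} one has $h=6$ and an exponent with $\beta_+=-1/7$, so $\beta_++1=6/7>0$ and Lemma~\ref{lem:a0}\,(\ref{a0-ii}) no longer forces the $\beta^{(j)}_i$ to be negative. Accordingly, the write-up should make the step ``$r_0=0\Rightarrow\beta\in\mathbb{Z}\langle E_n\rangle_{n\in\mathcal{N}}\Rightarrow\beta_+\leq-1$'' explicit before applying the estimate.
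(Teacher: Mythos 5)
Your argument is correct and follows essentially the same route as the paper: reduce via Theorem~\ref{thm2} and Theorem~\ref{lm-0} to showing $P^{+}_{0,\beta_+<0}\equiv 0$, then use Lemma~\ref{lem:a0}(\ref{a0-ii}) together with the integrality of the exponents for $h=0$ (i.e. $\beta_+<0\Rightarrow\beta_++1\leq 0$) to force $\beta<0$, contradicting $\beta\nless 0$. Your explicit flagging of the step $r_0=0\Rightarrow\beta\in\mathbb{Z}\langle E_n\rangle_{n\in\mathcal{N}}$ is exactly the point the paper compresses into the phrase ``since $\beta_+,\beta^{(j)}_i\in\mathbb{Z}$ in the case $h=0$.''
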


\begin{proof}
By Theorems \ref{lm-0} and \ref{thm2} we have to show that $P^+_{0,\beta_+<0}(\mathbf{t}_{\mathcal{N}})\equiv 0$. Moreover, using the configuration of signs from the proof of Theorem \ref{thm2}, it needs to be proved that $\beta_+<0$ implies $\beta^{(j)}_{i}<0$ for any $1\leq j\leq \nu$ and $1\leq i\leq r_j$. This is implied by part (\ref{a0-ii}) of Lemma \ref{lem:a0}, since $\beta_+,\beta^{(j)}_{i}\in \mathbb{Z}$ in the case $h=0$. 
\end{proof}


\subsection{Question about $P^+$}

We have shown an example in Section \ref{ss:ex} in which $P_{h}(\mathbf{t}_{\mathcal{N}})\neq P^{+}_{h}(\mathbf{t}_{\mathcal{N}})$ for some $h\in H$. Hence, by Theorem \ref{lm-0} the polynomial part $P_{h}$ in general can be `thicker' than $P^{+}_{h}$. Nevertheless, they have the same set of exponents for the monomials which object presumably plays an important role in geometrical applications. 

On the other hand, the calculation of $P^{+}_{h}$ is much more effective, therefore, it is natural to pose the question whether it can replace $P_{h}$ as a polynomial part. More precisely, we ask the following:
\begin{center}\emph{
Is it true in general that $P^{+}_{h}(1)=\mathfrak{sw}_h^{norm}$? }
\end{center}

\end{document}